\newcommand{\BB}{{\mathcal B}}
\newcommand{\CC}{{\mathcal C}}
\newcommand{\DD}{{\mathcal D}}
\newcommand{\EE}{{\mathcal E}}
\newcommand{\GG}{{\mathcal G}}
\newcommand{\HH}{{\mathcal H}}
\newcommand{\LL}{{\mathcal L}}
\newcommand{\MM}{{\mathcal M}}
\newcommand{\RR}{{\mathcal R}}
\newcommand{\SSS}{{\mathcal S}}
\newcommand{\TT}{{\mathcal T}}
\newcommand{\VV}{{\mathcal V}}
\newcommand{\N}{{\mathbb N}}
\newcommand{\R}{{\mathbb R}}
\newcommand{\Z}{{\mathbb Z}}
\newcommand{\wit}{\widetilde}
\newcommand{\wih}{\widehat}
\newcommand{\supp}{{\operatorname{supp}}}
\newcommand{\diam}{{\operatorname{diam}}}
\newcommand{\Lip}{{\operatorname{Lip}}}
\newcommand{\dist}{{\operatorname{dist}}}
\newcommand{\Top}{{\operatorname{Top}}}
\newcommand{\Tree}{{\operatorname{Tree}}}
\newcommand{\Child}{{\operatorname{Ch}}}
\newcommand{\Trees}{{\operatorname{Trs}}}
\newcommand{\Stop}{{\operatorname{Stp}}}
\newtheorem{theorem}{Theorem}[section]
\newtheorem{lemma}[theorem]{Lemma}
\newtheorem{corollary}[theorem]{Corollary}
\newtheorem{claim}[theorem]{Claim}
\newtheorem{question}[theorem]{Question}
\date{}
\subjclass[2010]{Primary 42B20, 42B25.}
\keywords{}
\begin{document}

\title[Variation for singular integrals on uniformly rectifiable sets]
{$L^p$-estimates for the variation for singular integrals on uniformly rectifiable sets}

\author{Albert Mas and Xavier Tolsa}

\address{Albert Mas. Departament de Matem\`atica Aplicada I,
ETSEIB, Universitat Polit\`ecnica de Catalunya. Avda. Diagonal 647, 08028 Barcelona (Spain)}
\email{amasblesa@gmail.com}

\address{Xavier Tolsa. Instituci\'{o} Catalana de Recerca i Estudis Avan\c{c}ats (ICREA) and Departament de Ma\-te\-m\`a\-ti\-ques, Universitat Aut\`onoma de Bar\-ce\-lo\-na, Catalonia}
\email{xtolsa@mat.uab.cat}

\thanks{A.M. was supported by the {\em Juan de la Cierva} program JCI2012-14073 (MEC, Gobierno de Espa\~na), ERC grant 320501 of the European Research Council (FP7/2007-2013), MTM2011-27739 and  MTM2010-16232 (MICINN, Gobierno de Espa\~na), and IT-641-13 (DEUI, Gobierno Vasco).
X.T. was supported by the ERC grant 320501 of the European Research
Council (FP7/2007-2013) and partially supported by MTM-2010-16232,  MTM-2013-44304-P (MICINN, Spain), 2014-SGR-75 (Catalonia),
and by Marie Curie ITN MAnET (FP7-607647).}
\begin{abstract}
The $L^p$ ($1<p<\infty$) and weak-$L^1$ estimates for the variation for Calder\'on-Zygmund operators with smooth odd kernel on uniformly rectifiable measures are proven. The $L^2$ boundedness and the corona decomposition method are two key ingredients of the proof.
\end{abstract}
\maketitle

\section{Introduction}
This article is devoted to obtain $L^p$ ($1<p<\infty$) and weak-$L^1$ estimates for the variation for Calder\'on-Zygmund operators with smooth odd kernel with respect to  uniformly rectifiable measures. As a matter of fact, we prove that if the $L^2$ estimate holds then the $L^p$ and weak-$L^1$ estimates follow; the results in \cite{MT} deal with the $L^2$ case.

Regarding the Calder\'on-Zygmund operators, given $1\leq n<d$ integers, in this article we consider kernels $K:\R^d\setminus\{0\}\to\R$ such that $K(-x)=-K(x)$ for all $x\neq0$ ($K$ is odd) and
\begin{equation}\label{4eq333}
|K(x)|\leq \frac{C}{|x|^{n}},\quad|\partial_{x_i}K(x)|\leq
\frac{C}{|x|^{n+1}}\quad\text{and}\quad|\partial_{x_i}\partial_{x_j}K(x)|\leq
\frac{C}{|x|^{n+2}}
\end{equation}
for all
$x=(x_1,\ldots,x_d)\in\R^d\setminus\{0\}$ and all $1\leq i,j\leq d$, where and $C>0$ is some constant. The growth estimate on the second derivatives required in \eqref{4eq333} comes from the fact that it is also assumed in \cite[Theorem 1.3 and Corollary 4.2]{MT}, which are used in this article (see Theorem \ref{weakL1 thm smooth}). We should mention that this growth estimate is usually required in what concerns to $L^2$ boundedness of singular integral operators and uniformly rectifiable measures, see for example \cite{DS1,David-Semmes,MTproc,MT,Tolsa4}. However, in Theorem \ref{teopra} below we consider more general kernels.

Given a Radon measure $\mu$ in $\R^d$, $f\in L^1(\mu)$ and $x\in\R^d$, we set
\begin{equation}\label{eqtkmu**}
T_{\epsilon}^\mu f(x)\equiv T_{\epsilon}(f\mu)(x):=\int_{|x-y|>\epsilon}K(x-y)f(y)\,d\mu(y),
\end{equation}
and we denote $T^\mu_* f(x)=\sup_{\epsilon>0}|T^\mu_\epsilon f(x)|$, $\TT=\{T_\epsilon\}_{\epsilon>0}$ and
$\TT^\mu =\{T^\mu_\epsilon\}_{\epsilon>0}$. Given $\rho>2$ and $f\in L_{loc}^1(\mu)$, the $\rho$-variation operator acting on $\TT^\mu f = \{T^\mu_\epsilon f\}_{\epsilon>0}$ is defined as
\begin{equation}\label{eqvar**}
\begin{split}
(\VV_\rho\circ\TT^\mu)f(x):=\sup_{\{\epsilon_m\}}
\bigg(\sum_{m\in\Z}
|T^\mu_{\epsilon_m}f(x)-T^\mu_{\epsilon_{m+1}}f(x)|^{\rho}\bigg)^{1/\rho}
\end{split}
\end{equation}
where the pointwise supremum is taken over all the non-increasing sequences of positive numbers $\{\epsilon_m\}_{m\in\Z}$.

Concerning the notion of uniform rectifiability, recall that a Radon measure $\mu$ in $\R^d$ is called $n$-rectifiable if there exists a countable family of $n$-dimensional $C^1$ submanifolds $\{M_i\}_{i\in\N}$ in $\R^d$ such that $\mu(E\setminus\bigcup_{i\in\N}M_i)=0$ and $\mu\ll\HH^n$, where $\HH^n$ stands for the $n$-dimensional Hausdorff measure.
Moreover, $\mu$ is said to be $n$-dimensional Ahlfors-David
regular, or simply $n$-AD regular, if there exists some constant $C>0$
such that $$C^{-1}r^n\leq\mu(B(x,r))\leq Cr^n$$ for all $x\in\supp\mu$
and $0<r\leq\diam(\supp\mu)$.  Note that if $\diam(\supp\mu)<+\infty$ then $\mu(\R^d)<\infty$ and so the condition $\mu(B(x,r))\leq Cr^n$ in the definition of AD regularity actually holds for all $r>0$.
Finally, one says that $\mu$ is uniformly $n$-rectifiable if it is $n$-AD regular and there exist $\theta,M>0$ so that, for each
$x\in\supp\mu$ and $0<r\leq\diam(\supp\mu)$, there is a Lipschitz mapping $g$ from
the $n$-dimensional ball $B^n(0,r)\subset\R^n$ into $\R^d$ such that
$\Lip(g)\leq M$ and
$$\mu\big(B(x,r)\cap g(B^n(0,r))\big) \geq \theta r^n,$$
where $\Lip(g)$ stands for the Lipschitz constant of $g$. In particular, uniform rectifiability implies rectifiability.
A set $E\subset\R^d$ is called $n$-rectifiable (or uniformly $n$-rectifiable) if $\HH^n|_E$ is $n$-rectifiable (or uniformly
$n$-rectifiable, respectively).

We are ready now to state our main result. In the statement $M(\R^d)$ stands for the Banach space of finite real Radon measures in $\R^d$ equipped with the total variation norm.

\begin{theorem}\label{teopri}
Let $\mu$ be a uniformly $n$-rectifiable measure in $\R^d$. Let $K$ be an odd kernel satisfying \eqref{4eq333} and, for $\rho>2$, consider the associated variation operator defined in \eqref{eqvar**}.
Then $$ \VV_\rho\circ\TT^\mu:L^p(\mu)\to L^p(\mu)\quad(1<p<\infty)
\quad\text{and}\quad\VV_\rho\circ\TT:M(\R^d)\to L^{1,\infty}(\mu)$$ are bounded operators. In particular,
$\VV_\rho\circ\TT^\mu: L^{1}(\mu)\to L^{1,\infty}(\mu)$ is bounded.
\end{theorem}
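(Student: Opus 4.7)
The starting point is the $L^2(\mu)$-boundedness of $\VV_\rho\circ\TT^\mu$ proved in \cite{MT}; the task is to extend this to the full $L^p$ scale ($1<p<\infty$) and to weak-$L^1$ both for $L^1(\mu)$-inputs and for arbitrary $\nu\in M(\R^d)$. I would rely on the sublinearity $\VV_\rho\circ\TT^\mu(f+g)\le\VV_\rho\circ\TT^\mu f+\VV_\rho\circ\TT^\mu g$, which is inherited from the $\ell^\rho$-triangle inequality, and proceed via the Calder\'on-Zygmund machinery.

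The central new step is the weak-$(1,1)$ inequality on $L^1(\mu)$. Given $f\in L^1(\mu)$ and $\lambda>0$, I would run a CZ decomposition adapted to $\mu$ at level $\lambda$: write $f=g+\sum_Q b_Q$ with $\|g\|_\infty\lesssim\lambda$, each $b_Q$ supported on a dyadic cube $Q$ with mean zero, $\|b_Q\|_{L^1(\mu)}\lesssim\lambda\mu(Q)$, and $\sum_Q\mu(Q)\lesssim\|f\|_{L^1(\mu)}/\lambda$. The good part is dispatched by Chebyshev and the $L^2$ hypothesis. After discarding $\bigcup_Q 2Q$, the bad part reduces to the pointwise bound
\begin{equation*}
(\VV_\rho\circ\TT^\mu)b_Q(x)\leq C\int_Q\frac{\ell(Q)}{|x-y|^{n+1}}|b_Q(y)|\,d\mu(y),\qquad x\in\R^d\setminus 2Q,
\end{equation*}
summed in $Q$ and integrated. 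To prove it, fix an arbitrary non-increasing sequence $\{\epsilon_m\}$ and split the indices according to whether the annulus $\{\epsilon_{m+1}<|x-y|\le\epsilon_m\}$ intersects $Q$. For non-transition indices, $T^\mu_{\epsilon_m}b_Q(x)$ equals either $0$ or $\int_Q K(x-y)b_Q(y)\,d\mu(y)$, and the modulus of the latter is controlled by the displayed right-hand side using $\int b_Q\,d\mu=0$ and the gradient bound in \eqref{4eq333}. For the transition indices---whose scales all lie in an interval of length $\le 2\ell(Q)$ at distance $\gtrsim\dist(x,Q)$ from the origin---the successive increments are integrals of $K b_Q$ over disjoint thin annuli; their total mass is at most $\int_Q|K(x-y)||b_Q(y)|\,d\mu(y)$, and combining this with H\"older (using $\rho>2$) yields the same estimate uniformly in $\{\epsilon_m\}$.

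Once weak-$(1,1)$ is in hand, Marcinkiewicz interpolation against the $L^2$-bound gives $L^p(\mu)$-boundedness for $1<p<2$. For $p>2$ I would derive a Cotlar-type pointwise inequality
\begin{equation*}
(\VV_\rho\circ\TT^\mu)f(x)\leq C\bigl(M_\mu\bigl(|(\VV_\rho\circ\TT^\mu)f|^s\bigr)(x)\bigr)^{1/s}+CM_\mu f(x),\qquad 0<s<1,
\end{equation*}
by applying the pointwise estimate above to $f-f\chi_{B(x,r)}$ with $r$ chosen via a stopping-time argument and using the $L^2$-bound on the complement; the $L^p(\mu)$-boundedness of $M_\mu$ for $p>1$ then yields the claim by a standard bootstrap. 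Finally, the measure-valued version $\VV_\rho\circ\TT:M(\R^d)\to L^{1,\infty}(\mu)$ follows by running the CZ decomposition on the signed measure $\nu$ itself, choosing cubes where $|\nu|(Q)/\mu(Q)\sim\lambda$, and repeating the pointwise argument with $\nu|_Q$ (minus its $\mu$-average on $Q$) in place of $b_Q\,d\mu$.

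I expect the main obstacle to be the pointwise variation estimate for $b_Q$: the supremum in $\VV_\rho$ is over \emph{all} non-increasing sequences of truncations, so one must dominate every possible choice simultaneously. The combinatorial split into non-transition and transition regimes, and the uniform-in-$\{\epsilon_m\}$ H\"older bound on the transition part, is the most delicate piece and is where the second-order kernel smoothness \eqref{4eq333} and the hypothesis $\rho>2$ genuinely enter.
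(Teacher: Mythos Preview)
Your proposed pointwise H\"ormander-type bound for the bad pieces,
\[
(\VV_\rho\circ\TT^\mu)b_Q(x)\leq C\int_Q\frac{\ell(Q)}{|x-y|^{n+1}}|b_Q(y)|\,d\mu(y),\qquad x\notin 2Q,
\]
is false, and this is exactly the obstacle the paper is built to overcome. A counterexample already appears for the Hilbert transform on $\R$: take $Q=[0,1]$, $b_Q=\chi_{[0,1/2]}-\chi_{[1/2,1]}$, and $x=R\gg1$. With the two-term sequence $\epsilon_1=R$, $\epsilon_2=R-\tfrac12$ one has
\[
\bigl|T^\mu_{\epsilon_1}b_Q(R)-T^\mu_{\epsilon_2}b_Q(R)\bigr|=\Bigl|\int_0^{1/2}\frac{dy}{R-y}\Bigr|\approx\frac1{2R},
\]
so $(\VV_\rho\circ\TT^\mu)b_Q(R)\gtrsim 1/R$, whereas your right-hand side is $\approx 1/R^2$. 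The mean-zero cancellation of $b_Q$ is useless when the truncation cuts \emph{through} $Q$; each transition increment is only of size $\dist(x,Q)^{-n}\|b_Q\|_{L^1(\mu)}$, which is not integrable over $\R^d\setminus 2Q$. Neither H\"older nor the hypothesis $\rho>2$ rescues this: the worst case is a \emph{single} transition increment, where no $\ell^\rho$ gain is possible.

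This is precisely why the paper cannot run the standard Calder\'on--Zygmund argument and must bring in the corona decomposition. In the weak-$(1,1)$ proof the transition terms are gathered into the ``boundary sum'' $S_b$ (see \eqref{eq22}--\eqref{eq23}); instead of a pointwise bound one proves the integrated estimate $\int_{\R^d\setminus\wih\Omega}S_b^2\,d\mu\lesssim\lambda\|\nu\|$. This requires organizing the CZ cubes $R_j$ via the tree structure of $\mu$ (Claims \ref{claim1}--\ref{claim2}), using the annuli estimate of Lemma \ref{annulus} to count how many $\mu$-dyadic cubes of each scale meet a sphere, and invoking Carleson packing for the bad and boundary-of-tree families (Lemma \ref{lemcarleson}). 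The same issue recurs for $p>2$: your Cotlar inequality with only $M_\mu f$ on the right fails for the same reason. The paper instead bounds the sharp maximal function of $(\VV_\rho\circ\TT^\mu)f$ by $\MM_2 f$ plus an auxiliary operator $\EE_{1/2}f$ built from the $\Top$ cubes of the corona decomposition (see \eqref{Msharp 32} and Lemma \ref{lemma E}); this extra term is genuinely needed.
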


The variation operator has been studied in different contexts during the last years, being probability, ergodic theory, and harmonic analysis three areas where variational inequalities turned out to be a powerful tool to prove new results or to enhace already known ones (see for example \cite{Bourgain,JKRW-ergodic,JSW, JW,Lepingle,MTX,OSTTW}, and the references therein). Inspired by the
results on variational inequalities for Calder\'on-Zygmund operators in $\R^n$ like \cite{CJRW-Hilbert,CJRW-singular integrals}, in \cite{MTproc} we began our study of such type of inequalities when one replaces the underlying space $\R^n$ and its associated Lebesgue measure by some reasonable measure in $\R^d$, being the Hausdorff measure on a Lipschitz graph a first natural candidate. In this regard, Theorem \ref{teopri} should be considered as a natural generalisation of variational inequalities for Calder\'on-Zygmund operators in $\R^n$ from a geometric measure-theoretic point of view.

A big motivation to prove Theorem \ref{teopri} is its connection to the so called David-Semmes problem regarding the Riesz transform and rectifiability. Given a Radon measure $\mu$ in $\R^d$, one defines the
$n$-dimensional Riesz transform of a function $f\in L^1(\mu)$ by
$R^\mu f(x)=\lim_{\epsilon\searrow0}R^\mu_\epsilon f(x)$
(whenever the limit exists), where
$$R^\mu_\epsilon f(x)=\int_{|x-y|>\epsilon}\frac{x-y}
{|x-y|^{n+1}}\,f(y)\,d\mu(y),\qquad x\in\R^d.$$
Note that the kernel of the
Riesz transform is the vector
$(x^1,\ldots,x^d)/|x|^{n+1}$ (so, in this case, the kernel $K$ in \eqref{4eq333} is vectorial).
We also use the notation $\RR^\mu f(x):=\{R_{\epsilon}^\mu f(x)\}_{\epsilon>0}$ and, as usual, we define the maximal operator $R^\mu_* f(x)=\sup_{\epsilon>0}|R^\mu_\epsilon f(x)|$.

G. David and S. Semmes asked more than twenty years ago the following question, which is still open (see, for example, \cite[Chapter 7]{Pajot}):
\begin{question}\label{ques David-Semmes}
Is it true that
an $n$-dimensional AD regular measure $\mu$ is uniformly
$n$-rectifiable if and only if $R_*^\mu$ is bounded in $L^2(\mu)$?
\end{question}

By \cite{DS1}, the ``only if'' implication of this question above is already known to hold. Also in \cite{DS1}, G. David and S. Semmes gave a positive answer to the other implication  if one replaces the $L^2$ boundedness of $R^\mu_*$ by the $L^2$ boundedness of $T^\mu_*$ for a wide class of odd kernels $K$. In the case $n=1$ the ``if'' implication was proved in \cite{MMV} using the notion of curvature of measures. Later on, the same implication was answered affirmatively for $n=d-1$ in the work \cite{NToV} by combining quasiorthogonality arguments with some variational estimates which use the maximum principle derived from the fact that the Riesz kernel is (a multiple) of the gradient of the fundamental solution of the Laplacian in $\R^d$ when $n=d-1$. Question \ref{ques David-Semmes} is still open for the general case $1< n<d-1$. However, thanks to Theorem \ref{teopri} and \cite[Theorem 2.3]{MT} we get the following corollary, which characterizes uniform rectifiability in terms of variational inequalities for the Riesz transform and more general Calder\'on-Zygmund operators.
\begin{corollary}\label{4main theorem}
Let $\mu$ be an $n$-dimensional AD regular Radon measure in $\R^d$. Then, the following are equivalent:
\begin{itemize}
\item[$(a)$] $\mu$ is uniformly $n$-rectifiable,
\item[$(b)$] for any odd kernel $K$ as in \eqref{4eq333} and any $\rho>2$,  $\VV_\rho\circ\TT^\mu$ is bounded in $L^p(\mu)$ for all $1<p<\infty$, and from $L^1(\mu)$ into $L^{1,\infty}(\mu)$,
\item[$(c)$] for some $\rho>0$, $\VV_\rho\circ\RR^\mu$ is bounded in $L^2(\mu)$.
\end{itemize}
\end{corollary}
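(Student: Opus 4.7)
The plan is to establish the cycle of implications (a) $\Rightarrow$ (b) $\Rightarrow$ (c) $\Rightarrow$ (a), with two of the three arrows being essentially immediate from existing results and only one requiring substantial work already done elsewhere.

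First, for (a) $\Rightarrow$ (b), I would simply invoke Theorem \ref{teopri}: any odd kernel $K$ satisfying \eqref{4eq333} is covered by the hypotheses there, so the $L^p(\mu)$ boundedness for $1<p<\infty$ and the weak-$L^1$ estimate for $\VV_\rho \circ \TT^\mu$ follow at once for every $\rho>2$. Next, for (b) $\Rightarrow$ (c), I would specialize (b) to the componentwise Riesz kernel $K(x)=x/|x|^{n+1}$, which is odd and satisfies \eqref{4eq333}; fixing any $\rho>2$ gives the $L^2(\mu)$ boundedness of $\VV_\rho\circ\RR^\mu$ demanded in (c).

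The substantive implication is (c) $\Rightarrow$ (a), which I would handle by citing \cite[Theorem 2.3]{MT}. That theorem asserts precisely that for an $n$-dimensional AD regular measure in $\R^d$, $L^2(\mu)$ boundedness of $\VV_\rho\circ\RR^\mu$ for some admissible $\rho$ already forces uniform $n$-rectifiability. The reason this can be proved while the David--Semmes question itself remains open in codimension $>1$ is that the variation pointwise dominates the maximal operator, i.e.\ $R_*^\mu f \leq (\VV_\rho\circ\RR^\mu)f$, so (c) delivers not only $L^2$ boundedness of $R_*^\mu$ but also quantitative oscillation control across scales; this extra information is what \cite{MT} exploits (via stopping-time/corona arguments tailored to variation) to conclude uniform rectifiability regardless of the codimension.

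Assembling the three arrows closes the equivalence. The only conceptually hard step is (c) $\Rightarrow$ (a), and it is entirely delegated to the prior work \cite{MT}; the genuinely new ingredient supplied by the present paper is the direction (a) $\Rightarrow$ (b), namely Theorem \ref{teopri}, which upgrades the previously known $L^2$ variational estimates on uniformly rectifiable measures to the full $L^p$ and weak-$L^1$ range and in particular makes the corollary's formulation possible for arbitrary odd Calder\'on--Zygmund kernels in the class \eqref{4eq333}.
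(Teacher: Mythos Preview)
Your proposal is correct and follows essentially the same approach as the paper: (a)$\Rightarrow$(b) is Theorem~\ref{teopri}, (b)$\Rightarrow$(c) is the trivial specialization to the Riesz kernel with $p=2$, and (c)$\Rightarrow$(a) is delegated to \cite[Theorem~2.3]{MT}. The paper's discussion of (c)$\Rightarrow$(a) emphasizes the combination of the arguments in \cite{Tolsa4} with the fact that $\VV_\rho\circ\RR^\mu$ controls $R^\mu_*$, rather than ``stopping-time/corona arguments tailored to variation'' as you wrote, but since both you and the paper simply cite \cite{MT} for this step, the difference is only in the informal gloss and not in the proof itself.
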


Comparing Corollary \ref{4main theorem} to Question \ref{ques David-Semmes},
note that the corollary asserts that if we replace the $L^2(\mu)$ boundedness of $R_*^\mu$ by the stronger
assumption that $\VV_\rho\circ\RR^\mu$ is bounded in $L^2(\mu)$, then $\mu$ must be uniformly rectifiable.
On the other hand, the corollary claims that the variation for singular integral operators with any odd kernel
satisfying \eqref{4eq333}, in particular for the $n$-dimensional Riesz transforms, is bounded in $L^p(\mu)$ for all $1<p<\infty$ and it is of weak-type $(1,1)$, which is a stronger conclusion than the one derived from an affirmative answer to Question \ref{ques David-Semmes}.

The proof of $(c)\Longrightarrow(a)$ in Corollary \ref{4main theorem} is not as hard as the converse implications. Essentally, a combination of the arguments in \cite{Tolsa4} with the fact that, in a sense,
$\VV_\rho\circ\RR^\mu$ controls $R^\mu_*$ does the job (see \cite{MT}).
Theorem \ref{teopri} is used to prove that $(a)\Longrightarrow(b)$ in Corollary \ref{4main theorem}, the corresponding result in \cite{MT} was only proved for $p=2$. Theorem \ref{teopri} allows us to get it in full generality, completing the whole picture on variation for singular integrals and uniform rectifiability. As far as we know, neither the $L^p$ estimates with $1<p<\infty$ nor the weak-$L^1$ estimate for $\VV_\rho\circ\TT^\mu$ on uniform rectifiable measures $\mu$ were known, except for the case $p=2$ treated in \cite{MT} and the case where $1<p<\infty$ but $\supp\mu$ is a Lipschitz graph with slope strictly smaller than $1$, solved in \cite{M}. Let us stress that from the latter result one can not easily deduce the $L^p$ estimates on uniformly rectifiable measures (as in the standard situation in Calder\'on-Zygmund theory), basically because the good-$\lambda$ method does not work properly for $\VV_\rho\circ\TT$. To avoid this obstacle, our method relies on the corona decomposition technique combined with some ideas from the Lipschitz case in \cite{M} and from \cite{CJRW-Hilbert} and \cite{MTX} to deal with variational inequalities, as well as the $L^2$ result from \cite{MT}.

Finally we wish to remark that the same techniques used to prove Theorem \ref{teopri} yield the following
result, which applies to more general Calder\'on-Zymund operators. See Section \ref{sec5} for the proof. 

\begin{theorem}\label{teopra}
For $1\leq n <d$, let $\mu$ be a uniformly $n$-rectifiable measure in $\R^d$. Let $K:\R^d\times \R^d\setminus\{(x,y):x=y\}\to\R$ be a kernel such that
$$
|K(x,y)|\leq \frac{C}{|x-y|^{n}} \quad \mbox{ for all $x\neq y\in\R^d$, }$$
and
$$
|K(x,y) - K(x',y)| + |K(y,x) - K(y,x')| \leq \frac{C\,|x-x'|}{|x-y|^{n+1}}$$
for all $x,x',y\in\R^d$  with $|x-x'|\leq \frac12|x-y|$.
For $\epsilon>0$, denote
$$T_{\epsilon}^\mu f(x)\equiv T_{\epsilon}(f\mu)(x):=\int_{|x-y|>\epsilon}K(x,y)f(y)\,d\mu(y).$$
Let
$\TT^\mu f = \{T^\mu_\epsilon f\}_{\epsilon>0}$ and let $(\VV_\rho\circ\TT^\mu)$ be defined  as in
\eqref{eqvar**}.
If $ \VV_\rho\circ\TT^\mu$ is bounded in $L^2(\mu)$, then it is also bounded in $L^p(\mu)$ for $1<p<\infty$
and from $L^1(\mu)$ to $L^{1,\infty}(\mu)$. Also,  $ \VV_\rho\circ\TT$ is bounded from $M(\R^d)$ to $L^{1,\infty}(\mu)$.
\end{theorem}

 \section*{Acknowledgement}
 We are very grateful to the anonymous referee for his/her careful reading of the paper and  for his/her comments and suggestions that improved its readability.

\section{Preliminaries and auxiliary results}

\subsection{Notation and terminology}
As usual, in the paper the letter `$C$' (or `$c$') stands
for some constant which may change its value at different
occurrences, and which quite often only depends on $n$ and $d$.
Given two families of constants $A(t)$ and $B(t)$, where $t$ stands for all the explicit or implicit parameters involving $A(t)$ and $B(t)$,
the notation $A(t)\lesssim B(t)$ ($A(t)\gtrsim B(t)$) means that
there is some fixed constant $C$ such that $A(t)\leq CB(t)$ ($A(t)\geq CB(t)$) for all $t$, with $C$ as above. Also, $A(t)\approx B(t)$ is equivalent to $A(t)\lesssim B(t)\lesssim A(t)$.

Throughout all the paper we assume that $1\leq n<d$ are integers and that $\mu$ is an $n$-dimensional AD-regular measure in $\R^d$.
Given a bounded Borel set $A\subset \R^d$ and $f\in L^1_{loc}(\mu)$, we write the mean of $f$ on $A$ with respect to $\mu$ as follows:
$$m_Af:=\frac{1}{\mu(A)}\int_A f\,d\mu.$$

We consider the centered maximal Hardy-Littlewood operator:
$$
\MM f(x)=\sup_{r>0}m_{B(x,r)}|f|.$$
This is known to be bounded in $L^p(\mu)$, for $1<p\leq\infty$, and from $M(\R^d)$ to $L^{1,\infty}(\mu)$.
For $1\leq q<\infty$, we also set
$$\MM_q f:=\MM(|f|^q)^{1/q}.$$
This is
 bounded in $L^p(\mu)$, for $q<p\leq\infty$, and from $L^q(\mu)$ to $L^{q,\infty}(\mu)$.

Given $0\leq a<b$, consider the closed annulus
$$A(x,a,b):=\overline{B(x,b)}\setminus B(x,a).$$
Given $k\in\Z$, set $$I_k:=[2^{-k-1},2^{-k}).$$ One defines the {\em short} and {\em long variation} operators $\VV^\SSS_\rho\circ\TT^\mu$ and $\VV^\LL_\rho\circ\TT^\mu$, respectively, by
\begin{equation*}
\begin{split}
&(\VV^\SSS_\rho\circ\TT^\mu)f(x):=\sup_{\{\epsilon_m\}}
\bigg(\sum_{k\in\Z}\,\sum_{\epsilon_m,\epsilon_{m+1}\in I_k}
|T^\mu_{\epsilon_m}f(x)-T^\mu_{\epsilon_{m+1}}f(x)|^{\rho}\bigg)^{1/\rho},\\
&(\VV^\LL_\rho\circ\TT^\mu)f(x):=\sup_{\{\epsilon_m\}}\bigg(\sum_{\begin{subarray}{c}m\in\Z:\,\epsilon_m\in I_j,\,\epsilon_{m+1}\in I_k\\ \text{ for some }j<k\end{subarray}}|T^\mu_{\epsilon_m}f(x)-T^\mu_{\epsilon_{m+1}}f(x)|^{\rho}\bigg)^{1/\rho},
\end{split}
\end{equation*}
where, in both cases, the pointwise  supremum is taken over all the non-increasing sequences of positive numbers $\{\epsilon_m\}_{m\in\Z}$. Given a finite Borel measure $\nu$ in $\R^d$, one defines $(\VV^\SSS_\rho\circ\TT)\nu(x)$ and $(\VV^\LL_\rho\circ\TT)\nu(x)$ similarly.
For convenience of notation, given $0<\epsilon\leq\delta$ we set
\begin{equation}\label{T_de}
T_{\delta,\epsilon}:=T_\delta-T_\epsilon\quad\text{and $T_{\delta,\epsilon}^\nu$ analogously}.
\end{equation}

Let $\varphi_\R:[0,+\infty)\to[0,+\infty)$ be a non-decreasing $\CC^2$ function with $\chi_{[4,\infty)}\leq\varphi_\R\leq
\chi_{[1/4,\infty)}$ and set $\varphi_\epsilon(x) =\varphi_\R\bigl(|x|^2/\epsilon^2\bigr)$.
We define
\begin{equation}\label{eqlast*}
\begin{split}
T_{\varphi_\epsilon}\nu(x) &:=  \int \varphi_\epsilon(x-y)K(x-y)\,d\nu(y)\quad\text{for }x\in\R^d
\end{split}
\end{equation}
(with $K(x-y)$ replaced by $K(x,y)$ if $K$ is as in Theorem \ref{teopra}).
Finally, write
$\TT_\varphi:=\{T_{\varphi_\epsilon}\}_{\epsilon>0}$.
Compare the operator in \eqref{eqlast*} to
$$T_{\epsilon}\nu(x) = \int \chi_\epsilon(x-y)K(x-y)\,d\nu(y),$$
where $\chi_\epsilon(\cdot):=\chi_{(1,\infty)}(|\cdot|/\epsilon)$,
and the family $\TT_\varphi$ to $\TT$.

\subsection{Dyadic lattices}\label{dyadic lattice}
For the study of the uniformly rectifiable measures we will use the ``dyadic cubes'' built by G. David in \cite[Appendix 1]{David-LNM} (see also \cite[Chapter 3 of Part I]{David-Semmes}). These dyadic cubes are not true cubes, but they play this role with respect to a given $n$-dimensional AD regular Radon measure $\mu$, in a sense.

Let us explain
which are the precise results and properties of this lattice of dyadic cubes.
Given an $n$-dimensional AD regular Radon measure $\mu$ in $\R^d$ (for simplicity, here we may assume that $\diam(\supp\mu)=\infty$), for each $j\in\Z$ there exists a family $\DD^\mu_j$ of Borel subsets of $\supp\mu$ (the dyadic cubes of the $j$-th generation) such that:
\begin{itemize}
\item[$(a)$] each $\DD^\mu_j$ is a partition of $\supp\mu$, i.e.\ $\supp\mu=\bigcup_{Q\in \DD^\mu_j} Q$ and $Q\cap Q'=\varnothing$ whenever $Q,Q'\in\DD^\mu_j$ and
$Q\neq Q'$;
\item[$(b)$] if $Q\in\DD^\mu_j$ and $Q'\in\DD^\mu_k$ with $k\leq j$, then either $Q\subset Q'$ or $Q\cap Q'=\varnothing$;
\item[$(c)$] for all $j\in\Z$ and $Q\in\DD^\mu_j$, we have $2^{-j}\lesssim\diam(Q)\leq2^{-j}$ and $\mu(Q)\approx 2^{-jn}$;
\item[$(d)$] there exists $C>0$ such that, for all $j\in\Z$, $Q\in\DD^\mu_j$, and $0<\tau<1$,
\begin{equation}\label{small boundary condition}
\begin{split}
\mu\big(\{x\in Q:\, &\dist(x,\supp\mu\setminus Q)\leq\tau2^{-j}\}\big)\\&+\mu\big(\{x\in \supp\mu\setminus Q:\, \dist(x,Q)\leq\tau2^{-j}\}\big)\leq C\tau^{1/C}2^{-jn}.
\end{split}
\end{equation}
This property is usually called the {\em small boundaries condition}.
From (\ref{small boundary condition}), it follows that there is a point $z_Q\in Q$ (the center of $Q$) such that $\dist(z_Q,\supp\mu\setminus Q)\gtrsim 2^{-j}$ (see \cite[Lemma 3.5 of Part I]{David-Semmes}).
\end{itemize}
We set $\DD^\mu:=\bigcup_{j\in\Z}\DD_j^\mu$.
Given a cube $Q\in\DD_j^\mu$, we say that its side length is $2^{-j}$, and we denote it by $\ell(Q)$. Notice that $\diam(Q)\leq\ell(Q)$. For $\lambda>1$, we also write
$$\lambda Q = \bigl\{x\in \supp\mu:\, \dist(x,Q)\leq (\lambda-1)\,\ell(Q)\bigr\}.$$
We denote
\begin{equation}\label{defbq}
B_Q:=B(z_Q,c_1\ell(Q)),
\end{equation}
where $c_1\geq1$ is some big constant which will be chosen below, depending on other parameters.

Let $P(Q)$ denote the cube in $\DD^\mu_{j-1}$ which contains $Q$ (the {\em parent} of $Q$), and set
\begin{equation*}
\begin{split}
& \Child(Q):=\{Q'\in\DD^\mu_{j+1}:\,Q'\subset Q\},\\
&V(Q):=\{Q'\in\DD^\mu_j:\, \dist(Q',Q)\leq C_1\ell(Q)\}
\end{split}
\end{equation*}
for some constant $C_1>0$ big enough ($\Child(Q)$ are the {\em children} of $Q$, and $V(Q)$ stands for the {\em vicinity} of $Q$).
Notice that $P(Q)$ is a cube from $\DD^\mu$ but $ \Child(Q)$ and $V(Q)$ are collections of cubes from $\DD^\mu$. It is not hard to show that the number of cubes in $ \Child(Q)$ and $V(Q)$ is bounded by some constant depending only on $n$ and the AD regularity constant of $\mu$, and on $C_1$ in the case of the vicinitiy.

The following assumptions will be used in the sequel: $c_1$ in \eqref{defbq} is big enough so that $$Q\cup B_{Q'}\subset B_Q\text{ for all }Q'\in\Child(Q)$$ and $C_1$ is big enough so that
$$B_Q\cap\supp\mu\subset\textstyle{\bigcup_{Q'\in V(Q)}}Q'.$$

Finally, we write
$$I_Q:=I_j=[\ell(Q)/2,\ell(Q)).$$

\subsection{The corona decomposition}\label{5ss corona decomposition}

Given an $n$-dimensional AD regular Radon measure $\mu$ on $\R^d$ consider the dyadic lattice $\DD^\mu$ introduced in Subsection \ref{dyadic lattice}.
Following \cite[Definitions 3.13 and 3.19 of Part I]{David-Semmes}, one says that $\mu$ admits a corona decomposition if, for each $\eta>0$ and $\theta>0$, one can find a triple $(\BB,\GG,\Trees)$, where $\BB$ and $\GG$ are two subsets of $\DD^\mu$ (the ``bad cubes'' and the ``good cubes'') and $\Trees$ is a family of subsets $S\subset\GG$ (that we will call {\em trees}), which satisfy the following conditions::
\begin{itemize}
\item[$(a)$] $\DD^\mu=\BB\cup\GG$\quad and\quad$\BB\cap\GG=\varnothing.$
\item[$(b)$] $\BB$ satisfies a Carleson packing condition, i.e., $\sum_{Q\in\BB:\, Q\subset R}\mu(Q)\lesssim\mu(R)$ for all $R\in\DD^\mu$.
\item[$(c)$] $\GG=\biguplus_{S\in\Trees}S$, i.e., any $Q\in\GG$ belongs to only one $S\in\Trees$.
\item[$(d)$] Each $S\in\Trees$ is {\em coherent}. This means that each $S\in\Trees$ has a unique maximal element $Q_S$ which contains all other elements of $S$ as subsets, that $Q'\in S$ as soon as $Q'\in\DD^\mu$ satisfies $Q\subset Q'\subset Q_S$ for some $Q\in S$, and that if $Q\in S$ then either all of the children of $Q$ lie in $S$ or none of them do (recall that if $Q\in\DD^\mu_j$, the {\em children} of $Q$ is defined as the collection of cubes $Q'\in\DD^\mu_{j+1}$ such that $Q'\subset Q$).
\item[$(e)$] The maximal cubes $Q_S$, for $S\in\Trees$, satisfy a Carleson packing condition. That is, $\sum_{S\in\Trees:\, Q_S\subset R}\mu(Q_S)\lesssim\mu(R)$ for all $R\in\DD^\mu$.
\item[$(f)$] For each $S\in\Trees$, there exists an $n$-dimensional Lipschitz graph $\Gamma_S$ with constant smaller than $\eta$ such that $\dist(x,\Gamma_S)\leq\theta\,\diam(Q)$ whenever $x\in2Q$ and $Q\in S$ (one can replace ``$x\in2Q$'' by ``$x\in c_2Q$'' for any constant $c_2\geq2$ given in advance, by \cite[Lemma 3.31 of Part I]{David-Semmes}).
\end{itemize}

It is shown in \cite{DS1} (see also \cite{David-Semmes}) that if $\mu$ is uniformly rectifiable then it
admits a corona decomposition for all parameters $k>2$ and $\eta,\theta>0$. Conversely,
the existence of a corona decomposition for a single set of parameters $k>2$ and $\eta,\theta>0$ implies that $\mu$ is uniformly rectifiable.

We set
$$\Top_\GG=\{Q_S:S\in\Trees\}\quad\mbox{ and }\quad \Top =\Top_\GG\cup\BB.$$
If $\mu$ is uniformly rectifiable, then, by the properties $(b)$ and $(e)$ above, for all $R\in\DD^\mu$ we have
$$\sum_{Q\in\Top:\, Q\subset R}\mu(Q)\lesssim\mu(R).$$

If $R\in S$ for some $S\in\Trees$, we denote by $\Tree(R)$ the set of cubes $Q\in S$ such that $Q\subset R$ (the {\em tree} of $R$).
Otherwise, that is, if $R\in\BB$, we set $\Tree(R):=\{R\}$. Finally,  $\Stop(R)$ stands for
 the set of cubes $Q\in\BB\cup(\GG\setminus \Tree(R))$ such that $Q\subset R$ and $P(Q)\in \Tree(R)$ (the {\em stopping} cubes relative to $R$), so actually $Q\subsetneq R$.
Notice that if $R\in\BB$, then we have $\Stop(R)=\Child(R)$.

\subsection{Auxiliary results}

The following lemma follows directly from \cite[Lemma 2.14]{Tolsa-llibre}    (see also \cite[Lemma 2.2]{M} for the case of Lipschitz graphs).
\begin{lemma}[Calder\'{o}n-Zygmund decomposition]\label{lema CZ}
Let $\mu$ be a compactly supported uniformly $n$-rectifiable measure in $\R^d$. For every positive measure $\nu\in M(\R^d)$ with compact support and every $\lambda>2^{d+1}\|\nu\|/\|\mu\|$, the following hold:
\begin{itemize}
\item[$(a)$] There exists a finite or countable collection of cubes $\{Q_j\}_j$ centered at $\supp\,\nu$ which are almost disjoint, that is $\sum_j\chi_{Q_j}\leq C$ (with $C$ depending only on $d$), and a function $f\in L^1(\mu)$ such that
\begin{gather}
\nu(Q_j)>2^{-d-1}\lambda\mu(2Q_j),\label{lema CZ 1}\\
\nu(\eta Q_j)\leq2^{-d-1}\lambda\mu(2\eta Q_j)\quad\text{for }\eta>2,\label{lema CZ 2}\\
\nu=f\mu\text{ in }\R^d\setminus\Omega\text{ with }|f|\leq\lambda\;\,\mu\text{-a.e},\text{ where }\Omega={\textstyle\bigcup_j}Q_j.\label{lema CZ 3}
\end{gather}
\item[$(b)$]For each $j$, let $R_j:=6Q_j$ and denote $w_j:=\chi_{Q_j}\left(\sum_k\chi_{Q_k}\right)^{-1}$. Then, there exists a family of functions $\{b_j\}_j$ with $\supp b_j\subset R_j$ and with constant sign satisfying
\begin{gather}
\int b_j\,d\mu=\int w_j\,d\nu,\label{lema CZ 4}\\
\| b_j\|_{L^\infty(\mu)}\mu(R_j)\leq C\nu(Q_j),\text{ and}\label{lema CZ 5}\\
{\textstyle\sum_j}|b_j|\leq C_0\lambda,\quad
\text{where $C_0$ is some absolute constant.}\label{lema CZ 6}
\end{gather}
\end{itemize}
\end{lemma}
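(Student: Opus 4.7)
The plan is to prove Lemma \ref{lema CZ} as a Calder\'on-Zygmund decomposition in the style of Tolsa's version for non-doubling measures, taking advantage of the fact that our AD regular measure $\mu$ is automatically doubling on cubes centered at $\supp\mu$.

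\smallskip\noindent\textbf{Part (a).} I would introduce the density-type maximal operator
\[
N\nu(x):=\sup\bigl\{\nu(Q)/\mu(2Q):x\in Q\bigr\},
\]
with the convention $0/0=0$, where the supremum runs over axis-parallel closed cubes, and set $\Omega:=\{N\nu>2^{-d-1}\lambda\}$. The assumption $\lambda>2^{d+1}\|\nu\|/\|\mu\|$ ensures that if $Q$ is so large that $2Q\supset\supp\mu$ then $\nu(Q)/\mu(2Q)\le\|\nu\|/\|\mu\|<2^{-d-1}\lambda$; hence for every $x\in\Omega$ there is a \emph{largest} cube $Q^{x}\ni x$ satisfying $\nu(Q^{x})>2^{-d-1}\lambda\,\mu(2Q^{x})$, for which properties \eqref{lema CZ 1} and \eqref{lema CZ 2} are immediate. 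Besicovitch's covering theorem then produces from $\{Q^{x}\}_{x\in\Omega}$ the required almost-disjoint subfamily $\{Q_j\}$ that still covers $\Omega$. On $\R^d\setminus\Omega$, the bound $N\nu\le 2^{-d-1}\lambda$ and the differentiation of Radon measures give $d\nu/d\mu\le\lambda$ $\mu$-a.e.\ (absorbing the AD regularity constant into the choice of threshold if necessary), while any point carrying $\mu$-singular mass of $\nu$ automatically satisfies $N\nu=\infty$ and so lies in $\Omega$; this yields \eqref{lema CZ 3}.

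\smallskip\noindent\textbf{Part (b).} Writing $\alpha_j:=\int w_j\,d\nu$, condition \eqref{lema CZ 2} with $\eta=3$ yields
\[
|\alpha_j|\le\nu(Q_j)\le\nu(3Q_j)\le 2^{-d-1}\lambda\,\mu(6Q_j)=2^{-d-1}\lambda\,\mu(R_j).
\]
The plan is to produce sets $A_j\subset R_j$ with $\mu(A_j)\gtrsim\mu(R_j)$ and $\sum_j\chi_{A_j}\lesssim 1$, and then set
\[
b_j:=\operatorname{sign}(\alpha_j)\,\frac{|\alpha_j|}{\mu(A_j)}\,\chi_{A_j}.
\]
Property \eqref{lema CZ 4} is then built in; \eqref{lema CZ 5} follows from $\|b_j\|_{L^\infty(\mu)}\mu(R_j)=|\alpha_j|\mu(R_j)/\mu(A_j)\lesssim|\alpha_j|\le\nu(Q_j)$; and \eqref{lema CZ 6} reduces, via the pointwise estimate $|b_j|\lesssim\lambda\chi_{A_j}$, to the bounded-overlap of the family $\{A_j\}$.

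\smallskip\noindent\textbf{Main obstacle.} The nontrivial step is producing the sets $A_j$. Within a single generation (cubes of comparable side length), bounded overlap of the dilates $R_j$ at any point is automatic from the almost-disjointness of $\{Q_j\}$ together with AD regularity; what has to be prevented is the accumulation of $R_j$'s of widely different sizes at a common point. The standard remedy, going back to Tolsa, is a recursive construction: order the indices by decreasing $\ell(Q_j)$ and define, for instance,
\[
A_j:=\bigl(\tfrac12 R_j\cap\supp\mu\bigr)\setminus\bigcup_{k:\,\ell(Q_k)<\ell(Q_j)} R_k,
\]
so that larger cubes defer to smaller ones. The estimate $\mu(A_j)\gtrsim\mu(R_j)$ then follows from AD regularity together with the observation that the combined $\mu$-mass of the smaller selected cubes meeting $\tfrac12 R_j$ is controlled, via \eqref{lema CZ 2} applied at the smaller scales, by a small fraction of $\mu(R_j)$. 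Finally, if $x\in A_i\cap A_j$ with $i\neq j$ and $\ell(Q_i)\le\ell(Q_j)$, the defining property of $A_j$ forces $\ell(Q_i)=\ell(Q_j)$, reducing the overlap question to the already-controlled single-generation case.
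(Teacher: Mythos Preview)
The paper does not supply a proof of this lemma; it is stated without proof as a standard tool (it is the Calder\'on--Zygmund decomposition for general Radon measures, essentially Lemma~2.14 of \cite{Tolsa-llibre}). So there is no argument in the paper to compare against, but your sketch can still be assessed on its own. Part~(a) is the standard route and is fine in substance. One minor correction: the step from $N\nu(x)\le 2^{-d-1}\lambda$ to $|f(x)|\le\lambda$ is not a matter of ``absorbing the AD regularity constant into the choice of threshold.'' The constant $2^{-d-1}$ is chosen precisely so that, using the elementary fact (valid for \emph{any} Radon measure on $\R^d$) that for $\mu$-a.e.\ $x$ there are arbitrarily small cubes $Q$ centered at $x$ with $\mu(2Q)\le 2^{d+1}\mu(Q)$, differentiation along such a sequence yields exactly $|f|\le\lambda$.

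Part~(b) has a genuine gap. You remove from $\tfrac12 R_j$ all the $R_k$ with $\ell(Q_k)<\ell(Q_j)$ and assert that the removed $\mu$-mass is a \emph{small fraction} of $\mu(R_j)$. But the only chain of estimates available --- $\mu(R_k)\approx\mu(2Q_k)<2^{d+1}\nu(Q_k)/\lambda$ from AD regularity and \eqref{lema CZ 1}, then bounded overlap of the $Q_k$'s, then \eqref{lema CZ 2} applied to $Q_j$ --- gives only $\sum_k\mu(R_k)\le C\,\mu(R_j)$ with $C$ depending on $d$, the Besicovitch constant, and the AD regularity constants; nothing forces $C<1$. (Note also that it is \eqref{lema CZ 1}, not \eqref{lema CZ 2}, that controls $\mu(2Q_k)$ at the smaller scales.) In fact nothing prevents the smaller $R_k$'s from covering $\tfrac12 R_j$ entirely, leaving $A_j=\varnothing$. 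Tolsa's construction goes in the \emph{opposite} direction: ordering by non-increasing size, one defines $A_j:=R_j\cap\bigl\{\sum_{i<j}|b_i|\le B\lambda\bigr\}$ for a large absolute constant $B$, so that the \emph{smaller} cube defers to the already-built $b_i$'s supported on the larger $R_i$'s. The heart of the matter is the estimate $\int_{R_j}\sum_{i<j}|b_i|\,d\mu\lesssim\lambda\,\mu(R_j)$ with implicit constant independent of $B$, which via Chebyshev gives $\mu(A_j)\ge\tfrac12\mu(R_j)$ and lets the pointwise bound $\sum_i|b_i|\le B\lambda$ close inductively. Your reversed deference cannot be repaired by the argument you sketch.
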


\vspace{2mm}
Let us remark that the cubes in the preceding lemma are ``true cubes'', i.e.\ they do not belong to $\DD^\mu$.

Notice that from \eqref{lema CZ 2} it follows that  $4.5 Q_j\cap\supp\mu\neq\varnothing$, which
implies that
\begin{equation}\label{eqad43}
\mu(\eta Q_j)\approx \ell(\eta Q_j)^n\qquad \mbox{for $\eta>5$ such that $\ell(\eta Q_j)\lesssim \diam(\supp\mu)$}.
\end{equation}
Additionally, if we assume that
\begin{equation}\label{eqsupp}
\supp\,\nu \subset {\mathcal U}_{\diam(\supp\mu)}(\supp\mu),
\end{equation}
where ${\mathcal U}_t(A)$ stands for the $t$-neighborhood of $A$, then
we infer that $\ell(Q_j)\leq C\diam(\supp\mu)$, for all $j$ and for some absolute constant $C$. Otherwise, for $C$ big enough we would deduce that
$$\supp\mu\cup\supp\nu\subset 2Q_j,$$ and thus $\mu(2Q_j)=\|\mu\|$ and $\nu(Q_j)\leq\|\nu\|$, so by \eqref{lema CZ 1}
$$\|\nu\|>2^{-d-1}\lambda\|\mu\|,$$
but this contradicts the choice of $\lambda$. In particular, under the assumption \eqref{eqsupp}, we infer that
\begin{equation}\label{xcvbvcx}
\mu(R_j)\approx\ell(R_j)^n\approx\ell(Q_j)^n.
\end{equation}

We will need the following version of the dyadic Carleson embedding theorem.

\begin{theorem}[Dyadic Carleson embedding theorem] \label{teocarleson}
\index{Carleson's embedding theorem}
\index{dyadic Carleson embedding theorem}
Let $\mu$ be a Radon measure on $\R^d$.
Let $\DD$ be some dyadic lattice from $\R^d$ and let $\{a_Q\}_{Q\in\DD}$ be a family of
non-negative numbers. Suppose that for every cube $R\in\DD$ we have
\begin{equation} \label{pack}
\sum_{Q\in\DD:Q\subset R} a_Q \leq c_3\,\mu(R).
\end{equation}
Then every family of non-negative numbers $\{\gamma_Q\}_{Q\in\DD}$ satisfies
\begin{equation}\label{eqcarleson1}
\sum_{Q\in\DD} \gamma_Q\,a_Q\leq c_3\int \sup_{Q\ni x}\gamma_Q\,d\mu(x).
\end{equation}
Also, for $p\in (1,\infty)$, if $f\in L^p(\mu)$,
\begin{equation}\label{eqcarleson2}
\sum_{Q\in\DD} |m_Q f|^p \,a_Q\leq c\,c_3\|f\|_{L^p(\mu)}^p,
\end{equation}
where $m_Qf =\int_Qf\,d\mu/\mu(Q)$ and
$c$ is an absolute constant.
\end{theorem}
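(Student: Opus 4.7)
The plan is to prove \eqref{eqcarleson1} first via a layer-cake/maximal-cubes argument and then derive \eqref{eqcarleson2} in a couple of lines by invoking the $L^p$-boundedness of the dyadic maximal operator.

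For \eqref{eqcarleson1}, I would write each $\gamma_Q$ as $\int_0^\infty \chi_{\{\gamma_Q > t\}}\,dt$ and apply Fubini to get
$$\sum_{Q\in\DD}\gamma_Q\,a_Q \;=\; \int_0^\infty\bigg(\sum_{Q\in\DD:\,\gamma_Q>t}a_Q\bigg)dt.$$
Fix $t>0$ and let $\GG_t=\{Q\in\DD:\gamma_Q>t\}$. Two cubes of $\DD$ are either nested or disjoint, so $\GG_t$ admits a collection $\{R_i(t)\}_i$ of maximal (hence pairwise disjoint) elements. Each $Q\in\GG_t$ is then contained in exactly one $R_i(t)$, so the packing hypothesis \eqref{pack} yields
$$\sum_{Q\in\GG_t}a_Q \;=\; \sum_i\sum_{\substack{Q\in\GG_t\\ Q\subset R_i(t)}}a_Q \;\leq\; \sum_i\sum_{Q\subset R_i(t)}a_Q \;\leq\; c_3\sum_i\mu(R_i(t)) \;=\; c_3\,\mu\Bigl(\textstyle\bigcup_i R_i(t)\Bigr).$$
Since $\bigcup_i R_i(t)=\{x\in\R^d:\sup_{Q\ni x}\gamma_Q>t\}$, integrating in $t$ and applying layer-cake once more produces $\sum_Q\gamma_Q a_Q \leq c_3\int\sup_{Q\ni x}\gamma_Q\,d\mu$, which is \eqref{eqcarleson1}.

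For \eqref{eqcarleson2}, I would apply \eqref{eqcarleson1} with $\gamma_Q=|m_Qf|^p$, obtaining
$$\sum_{Q\in\DD}|m_Qf|^p a_Q \;\leq\; c_3\int \sup_{Q\ni x}|m_Qf|^p\,d\mu(x) \;=\; c_3\int (M_\DD f)^p\,d\mu,$$
where $M_\DD f(x):=\sup_{Q\in\DD,\,Q\ni x}|m_Qf|$ is the dyadic maximal operator associated to $\DD$. The operator $M_\DD$ is trivially $L^\infty$-bounded, and a disjoint-cube selection essentially identical to the one above gives the weak-type $(1,1)$ bound: the maximal cubes in $\{M_\DD f>\lambda\}$ have total $\mu$-measure at most $\lambda^{-1}\|f\|_{L^1(\mu)}$. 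Marcinkiewicz interpolation then yields $\|M_\DD f\|_{L^p(\mu)}\lesssim\|f\|_{L^p(\mu)}$ for $1<p<\infty$, which finishes \eqref{eqcarleson2} (the right-hand side of \eqref{eqcarleson2} should of course read $\|f\|_{L^p(\mu)}^p$).

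The only genuine technical issue I anticipate is very mild: since $\DD=\bigcup_{j\in\Z}\DD_j$ contains arbitrarily large cubes, the greedy extraction of maximal elements in $\GG_t$ can in principle fail if $\GG_t$ contains an infinite ascending chain. The standard remedy is to truncate the lattice to $\DD^{(N)}:=\bigcup_{j\geq -N}\DD_j$, where maximal elements exist trivially, run the argument there to get the inequality with $\DD^{(N)}$ in place of $\DD$, and then let $N\to\infty$ by monotone convergence. This adjustment does not affect the constant $c_3$, so no substantive obstacle is hidden in this step — the proof is essentially a one-page matter once the layer-cake identity is written down.
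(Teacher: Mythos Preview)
Your argument is correct and is essentially the standard proof of the dyadic Carleson embedding theorem; the paper itself does not prove this result but merely cites it as classical (referring to \cite[Theorem~5.8]{Tolsa-llibre}), so there is nothing to compare against beyond noting that what you wrote is precisely the textbook route. Your observation that the right-hand side of \eqref{eqcarleson2} should be $\|f\|_{L^p(\mu)}^p$ rather than $\|f\|_{L^2(\mu)}^p$ is also correct---this is a typo in the paper.
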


In the preceding theorem, the lattice $\DD$ can be, for example, either the usual dyadic lattice of $\R^d$ or,
in the case when $\mu$ is
AD-regular, the lattice of cubes associated with $\mu$. For the proof of this classical result, see
\cite[Theorem 5.8]{Tolsa-llibre}, for example.

We say that $\CC\subset\DD$ is a Carleson family of cubes if
$$
\sum_{Q\in\CC:Q\subset R} \mu(Q) \leq c_3\,\mu(R)\qquad\mbox{for all $R\in\DD$.}$$
By \eqref{eqcarleson2}, it follows that for such a family $\CC$ and any $f\in L^p(\mu)$,
$$\sum_{Q\in\CC} |m_Q f|^p \,\mu(Q)\leq c\,c_3\|f\|_{L^p(\mu)}^p.
$$

\begin{lemma}\label{lemcarleson}
Let $\nu\in M(\R^d)$ be a positive measure with compact support and $\lambda>2^{d+1}\|\nu\|/\|\mu\|$. Consider cubes $\{Q_j\}_j$ and $\{R_j\}_j$ as in
 {\em Lemma \ref{lema CZ}}. Denote
$$\nu_b:=\sum_j\left(w_j\nu-b_j\mu\right),$$
where the $b_j$'s satisfy \eqref{lema CZ 4}, \eqref{lema CZ 5} and \eqref{lema CZ 6}, and
$w_j:=\chi_{Q_j}\left(\sum_k \chi_{Q_k}\right)^{-1}$.
Let $\CC\subset \DD^\mu$ be a family of cubes and $\{a_S\}_{S\in\CC}$ be a family of
non-negative numbers such that
\begin{equation} \label{pack'}
\sum_{S\in\CC:S\subset R} a_S \leq c_3\,\mu(R).
\end{equation}
For each $S\in\CC$ consider the ball $B_S$ given by $(\ref{defbq})$, so it is centered
on $S$, $S\subset B_S$ and $r(B_S)\approx \ell(S)$. Suppose that there exists some constant $\tilde c >0$ such that for each $S\in \CC$, the ball $\tilde c B_S$ contains some
cube $R_j$.
Then, for every $p\in(1,\infty)$,
\begin{equation}\label{eqdk211}
\sum_{S\in\CC} \left(\frac{|\nu_b|(B_S)}{\ell(S)^n}\right)^p a_S \lesssim\,\lambda^{p-1}\,\|\nu\|
\end{equation}
and
\begin{equation}\label{eqdk212}
\sum_{S\in\CC} \left(\frac{\nu(B_S)}{\ell(S)^n}\right)^p a_S \lesssim\,\lambda^{p-1}\,\|\nu\|,
\end{equation}
with the implicit constants depending on $p$, $c_3$, and $\tilde c$.
\end{lemma}

In particular, this lemma applies to the case when $a_S=1$ for all $S\in\CC$ and $\CC$ is a Carleson family satisfying the additional conditions stated in the lemma.

\begin{proof} First we will show \eqref{eqdk211}.
By \eqref{eqcarleson1} in Theorem \ref{teocarleson}, one gets
\begin{equation}\label{eqdecompp1}
\sum_{S\in\CC} \left(\frac{|\nu_b|(B_S)}{\ell(S)^n}\right)^p a_S\leq
c_3\int \left(\sup_{S\ni x}\frac{|\nu_b|(B_S)}{\ell(S)^n}\right)^p\,d\mu(x).
\end{equation}
Write
$$\wit \nu_b = \sum_j w_j\nu \quad \mbox{ and }\quad \wit g = \sum_j b_j,$$
so that, for every $S\in\CC$,
$$|\nu_b|(B_S)\leq \wit \nu_b(B_S) + \int_{B_S}\wit g\,d\mu.$$
Note that the measure $\wit \nu_b$ and the functions $b_j$, $\widetilde g$ are positive because $\nu$ is assumed to be  a positive measure.
By \eqref{eqdecompp1} then we have
\begin{equation}\label{eqdecomp94}
\sum_{S\in\CC} \left(\frac{|\nu_b|(B_S)}{\ell(S)^n}\right)^p a_S \lesssim
\int \left(\sup_{S\ni x}\frac{\wit \nu_b(B_S)}{\ell(S)^n}\right)^p\,d\mu(x)
+ \int \left(\sup_{S\ni x}m_{B_S}\wit g\right)^p\,d\mu(x),
\end{equation}
where $m_{B_S} \wit g = \int_{B_S}\wit g\,d\mu/\mu(B_S)$ and we have taken into account that
$\mu(B_S)\approx\ell(S)^n$.

To deal with the last integral on the right hand side of \eqref{eqdecomp94} we use
 the non-centered maximal Hardy-Littlewood operator defined by
$$\wit\MM f(x) = \sup_{B\ni x} \frac1{\mu(B)}\int_B|f|\,d\mu,$$
where the supremum is taken over all the balls which contain $x$ and whose center lies on $\supp\mu$.
Recalling that $\wit\MM$ is bounded in $L^p(\mu)$, and using that $\|\wit g\|_{L^\infty(\mu)}\leq c\,\lambda$ (by
\eqref{lema CZ 6}) and $\|\wit g\|_{L^1(\mu)}\leq c\,\|\nu\|$ (by \eqref{lema CZ 5}), we obtain
\begin{equation}\label{eqgg423}
\int \left(\sup_{S\ni x}m_{B_S}\wit g\right)^p\,d\mu(x)\leq c\int (\wit\MM\wit g)^{\,p}\,d\mu \leq c\int \wit g^{\,p}\,d\mu
\leq c\lambda^{p-1}\,\int \wit g\,d\mu \leq c\lambda^{p-1}\,\|\nu\|.
\end{equation}

Now we turn our attention to the first integral on the right hand side of \eqref{eqdecomp94}.
We write
\begin{align*}
\int \left(\sup_{S\ni x}\frac{\wit \nu_b(B_S)}{\ell(S)^n}\right)^p\,d\mu(x)
&=
\int_{\bigcup_j 2Q_j} \ldots
+
\int_{\R^d\setminus \bigcup_j 2Q_j} \ldots =:I_1+I_2.
\end{align*}
To estimate $I_1$, we claim that
$$\frac{\wit \nu_b(B_S)}{\ell(S)^n}\lesssim\lambda.$$
This follows from the fact that $\tilde cB_S$ contains some cube $R_j$, which in turn implies that, for some $\eta\geq 6$ with
$\eta\approx \ell(S)/\ell(Q_j)$, $B_S$
is contained in some cube $\eta Q_j$ with $\ell(\eta Q_j)\approx \ell(S)$, and then
$$\frac{\wit \nu_b(B_S)}{\ell(S)^n}\lesssim \frac{\nu(\eta Q_j)}{\ell(\eta Q_j)^n},$$
which together with \eqref{eqad43} and \eqref{lema CZ 2} yields the claim above.
Then, using also \eqref{lema CZ 1} and the fact the cubes $\{Q_j\}_j$ have finite overlap, we deduce that
$$I_1 \lesssim \lambda^p\,\sum_j\mu(2Q_j)\lesssim \lambda^p\,\sum_j\frac{\nu(Q_j)}\lambda \lesssim \lambda^{p-1}\,\|\nu\|.$$

Finally we deal with the integral $I_2$.
Consider $x\in \R^d\setminus \bigcup_j 2Q_j$ and  $S$ such that $x\in S\in\CC$ (which, in particular, tells us that
$S\setminus \bigcup_j 2Q_j\neq\varnothing$). Notice that
$$\wit \nu_b(B_S) \leq \sum_{i:Q_i\cap B_S\neq\varnothing}\nu(Q_i).$$
From the conditions
$Q_i\cap B_S\neq\varnothing$ and $S\setminus \bigcup_j 2Q_j\neq\varnothing$, we infer that
$r(B_S)\geq \frac12 \ell(Q_i)$. So we deduce that $Q_i\subset c_4B_S$, for some constant $c_4\gtrsim 1$. Hence,
$$\wit \nu_b(B_S) \leq \sum_{i:Q_i\subset c_4 B_S}\nu(Q_i) \leq \sum_{i:Q_i\subset c_4 B_S} \int b_i\,d\mu,$$
where we used \eqref{lema CZ 4} for the last estimate. Observe now that if $Q_i\subset c_4 B_S$, then $R_i\subset c_5 B_S$, for some
absolute constant $c_5\geq c_4$. So recalling that
$\wit g = \sum_j b_j$,
we obtain
$$\wit \nu_b(B_S)\lesssim \int_{c_5B_S}\wit g\,d\mu,$$
Therefore,
$$\frac{\wit \nu_b(B_S)}{\ell(S)^n}\lesssim \frac1{\mu(B_S)}\int_{c_5B_S}\wit g\,d\mu\lesssim \wit\MM\wit g(x)$$
for every $x\in S$. So arguing as in \eqref{eqgg423} we deduce that
$$I_2\lesssim \int (\wit\MM\wit g(x))^p\,d\mu(x)\lesssim\lambda^{p-1}\,\|\nu\|.$$
Together with the estimate we obtained for $I_1$, this yields
\begin{equation}\label{eqdh23}
\int \left(\sup_{S\ni x}\frac{\wit \nu_b(B_S)}{\ell(S)^n}\right)^p\,d\mu(x)\lesssim\lambda^{p-1}\,\|\nu\|,
\end{equation}
and so using \eqref{eqgg423} we get \eqref{eqdk211}.

In order to show \eqref{eqdk212}, recall that $\nu=\wit\nu_b+f\mu$ with $f$ as in (\ref{lema CZ 3}). Thus,
\begin{equation*}
\begin{split}
\nu(B_S)=\wit\nu_b(B_S)+\int_{B_S}f\,d\mu
\lesssim\wit\nu_b(B_S)+m_{B_S}f\,\ell(S)^n,
\end{split}
\end{equation*}
and then
\begin{equation}\label{eqdk212xsxs}
\sum_{S\in\CC} \left(\frac{\nu(B_S)}{\ell(S)^n}\right)^p a_S
\lesssim\,\sum_{S\in\CC} \left(\frac{\wit\nu_b(B_S)}{\ell(S)^n}\right)^p a_S+\sum_{S\in\CC} \left(m_{B_S}f\right)^p a_S.
\end{equation}
We easily get \eqref{eqdk212} from \eqref{eqdk212xsxs}, combinig \eqref{eqcarleson1} and \eqref{eqcarleson2} in Theorem \ref{teocarleson} with \eqref{eqdh23} and the fact that $\|f\|_{L^p(\mu)}^p\leq\lambda^{p-1}\|\nu\|$ by \eqref{lema CZ 3}.
\end{proof}
\vspace{2mm}

Let $\mu$ be a uniformly $n$-rectifiable measure in $\R^d$. Consider the splitting
$\DD^\mu=\BB\cup(\biguplus_{T\in\Trees}T)$ given by the corona decomposition of $\mu$.
For a fixed constant $A\geq1$, we denote by
$\partial T$ the family of cubes $Q\in T$ for which either $Q=Q_T$ with $Q_T$ as in $(d)$ in Section \ref{5ss corona decomposition} or there exists some $P\in\DD^\mu\setminus T$ such that
\begin{equation}\label{frontera tree}
\frac12 \,\ell(P)\leq \ell(Q)\leq 2\ell(P)\quad\mbox{ and }\quad \dist(P,Q)\leq A\,\ell(Q).
\end{equation}
We call $\partial T$ the boundary of $T$. If $T=\Tree(R)$, with $R\in\Top_\GG$, we also write
$\partial\Tree(R):=\partial T$. We set
$$\partial\Trees := \bigcup_{T\in\Trees} \partial T.$$
 Notice that $\partial T\subset T$.

The following lemma has been proved in \cite[(3.28) in page 60]{David-Semmes}.

\begin{lemma}
Let $\mu$ be a uniformly $n$-rectifiable measure in $\R^d$. The family $\partial\Trees$
 is a Carleson family.
\end{lemma}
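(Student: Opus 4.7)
The plan is to bound $\sum_{Q \in \partial\Trees,\, Q \subset R} \mu(Q) \lesssim \mu(R)$ for each $R \in \DD^\mu$ by combining the Carleson packing of $\Top$ with the small boundary condition \eqref{small boundary condition}. The key observation is that a cube in $\partial T$ can only fail to be interior to $T$ for two reasons: either it sits near the boundary of the top cube $Q_T$, or it sits near one of the stopping cubes of $T$.

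For each $Q \in \partial T$ with $T\in\Trees$, fix a witness $P \in \DD^\mu \setminus T$ with $\tfrac12\ell(P) \leq \ell(Q) \leq 2\ell(P)$ and $\dist(P,Q)\leq A\ell(Q)$. I would distinguish two cases according to the position of $P$ relative to $Q_T$. \emph{Case (i):} $P \not\subset Q_T$. Since both cubes are dyadic, either $P \supsetneq Q_T$ (which forces $\ell(Q) \in (\ell(Q_T)/2, \ell(Q_T)]$, so $Q \in \{Q_T\}\cup\Child(Q_T)$: only $O(1)$ such $Q$) or $P \cap Q_T = \varnothing$, in which case $\dist(Q, \supp\mu\setminus Q_T) \leq A\ell(Q)$. \emph{Case (ii):} $P \subsetneq Q_T$ and $P \notin T$. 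By coherence of $T$, the smallest ancestor of $P$ that is not in $T$ is a stopping cube $P^* \in \Stop(Q_T)$ with $P \subseteq P^*$, so $\ell(P^*) \gtrsim \ell(Q)$ and $\dist(Q, P^*) \leq A\ell(Q)$.

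For a fixed $T$ with $Q_T \subset R$, I would sum these contributions. In Case (i), the top-scale part is $O(1)$ cubes contributing $\lesssim \mu(Q_T)$; the near-$\partial Q_T$ part at dyadic scale $2^{-j} < \ell(Q_T)$ is bounded, via the small boundary condition applied to $Q_T$ with $\tau \approx A \cdot 2^{-j}/\ell(Q_T)$, by $C\tau^{1/C}\ell(Q_T)^n$, and the geometric sum over $j$ yields $\lesssim \mu(Q_T)$. In Case (ii), fixing $P^*$ and summing over $Q$ near $P^*$ at scales $2^{-j} \leq \ell(P^*)$, the small boundary condition applied to $P^*$ (to the set $\{x \in \supp\mu\setminus P^*:\dist(x,P^*) \leq A 2^{-j}\}$) again gives geometric decay summing to $\lesssim \mu(P^*)$. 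Since each $P^* \in \Stop(Q_T)$ is either in $\BB$ or, by coherence (its parent lies in $T$), the top of another tree, one has $\Stop(Q_T) \subset \Top$; hence $\sum_{P^* \in \Stop(Q_T)} \mu(P^*) \lesssim \mu(Q_T)$. Putting this together gives $\sum_{Q \in \partial T} \mu(Q) \lesssim \mu(Q_T)$, and summing over $T$ with $Q_T \subset R$, again via the Carleson packing of $\Top_\GG$, controls this portion by $\mu(R)$.

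It remains to handle the at most one tree $T_0$ with $Q_{T_0}\supsetneq R$: its top-scale piece from Case (i) contributes no $Q\subset R$ (since $\ell(Q)\approx\ell(Q_{T_0}) > \ell(R)$); the near-$\partial Q_{T_0}$ piece restricted to $Q\subset R$ is absorbed by the same argument with $R$ in place of $Q_{T_0}$, using $\supp\mu\setminus Q_{T_0} \subset \supp\mu\setminus R$ so that the small boundary condition of $R$ applies. The Case (ii) contribution splits into stopping cubes $P^*\subset R$ (handled as before) and $O(1)$ stopping cubes $P^*$ with $\ell(P^*)\gtrsim\ell(R)$ meeting a bounded neighborhood of $R$, whose boundary cubes $Q\subset R$ sit near $\partial R$ and are again controlled via the small boundary condition of $R$. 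I expect the main technical nuisance to be this last bookkeeping for $T_0$, but the mechanism is the same small-boundary-plus-geometric-sum argument already developed for the other trees.
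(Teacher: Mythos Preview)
The paper does not give its own proof of this lemma; it simply cites \cite[(3.28) in page 60]{David-Semmes}. Your argument is essentially the standard one from that reference: partition $\partial T$ into cubes near $\partial Q_T$ and cubes near a stopping cube of $T$, control each piece via the small boundary condition \eqref{small boundary condition} and a geometric sum over scales, and then sum over trees using the Carleson packing of $\Top$. The strategy is sound and the estimate $\sum_{Q\in\partial T}\mu(Q)\lesssim\mu(Q_T)$ is exactly what is proved in David--Semmes.

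A few minor points to tighten. In Case~(ii) you want the \emph{maximal} (largest) ancestor of $P$ not in $T$, not the smallest; that is the cube whose parent lies in $T$, hence a genuine stopping cube. In the bookkeeping for the exceptional tree $T_0$ with $Q_{T_0}\supsetneq R$, your claim that there are only $O(1)$ stopping cubes $P^*$ with $\ell(P^*)\gtrsim\ell(R)$ meeting a bounded neighborhood of $R$ is not obviously true (disjointness alone does not bound their number across scales), but it is also unnecessary: once $P^*\cap R=\varnothing$, every associated $Q\subset R$ satisfies $\dist(Q,\supp\mu\setminus R)\leq A\ell(Q)$, and the small boundary condition for $R$ controls the total regardless of how many $P^*$ there are, since each $Q$ has a unique witness. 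The same remark covers the omitted sub-case $\ell(P^*)<\ell(R)$ with $P^*\not\subset R$. Finally, the ``top-scale'' piece of Case~(i) for $T_0$ could contribute the single cube $Q=R$ when $\ell(Q_{T_0})=2\ell(R)$; this is harmless but not literally ``no $Q\subset R$''.
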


We will also need the following auxiliary result.

\begin{lemma}[Annuli estimates]\label{annulus}
Assume that the constants $\eta$ and $\theta$ in property $(f)$ of the corona decomposition (see {\em Section \ref{5ss corona decomposition}}) are small enough.
Let $Q\in\DD^\mu$, $x\in Q$ and $\epsilon\in[\ell(Q)/2,\ell(Q)]$. Let $k\in\Z$ be  such that $2^{-k}\leq\ell(Q)$. Given $R\in V(Q)$ {and $C>0$},
denote
$$\Lambda_k:=\left\{P\in \Tree(R)\cup \Stop(R):\,\ell(P)=2^{-k},\, P\subset A(x,\epsilon-C2^{-k},\epsilon+C2^{-k})\right\}.$$
Then
\begin{equation}\label{eq4974}
\mu\left(\textstyle{\bigcup_{P\in\Lambda_k}P}\right)\lesssim2^{-k}\ell(R)^{n-1},
\end{equation}
where the implicit constant in the last inequality above only depends on $n$, $d$, $\mu$ and $C$.
\end{lemma}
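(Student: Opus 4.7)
I would split the proof into two cases, according as $R\in\BB$ or $R$ belongs to some tree $S\in\Trees$. If $R\in\BB$, then $\Tree(R)\cup\Stop(R)=\{R\}\cup\Child(R)$, so every $P\in\Lambda_k$ has $\ell(P)\geq\ell(R)/2$; combined with $\ell(P)=2^{-k}$ this forces $\ell(R)\lesssim 2^{-k}$, and the AD regularity of $\mu$ yields $\mu(\bigcup_{P\in\Lambda_k}P)\leq\mu(R)\lesssim\ell(R)^n\lesssim 2^{-k}\ell(R)^{n-1}$.

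In the main case $R\in S\in\Trees$, let $\Gamma:=\Gamma_S$ be the Lipschitz graph supplied by property~$(f)$ of the corona decomposition, of slope at most $\eta$. First I locate $x$ near $\Gamma$: since $R\in V(Q)$ and $\ell(Q)=\ell(R)$, every $y\in Q$ satisfies $\dist(y,R)\leq 2\ell(R)$, so $Q\subset 3R$, and property~$(f)$ (with enlargement constant $c_2=3$) gives $\dist(x,\Gamma)\leq\theta\,\ell(R)$. Likewise every $P\in\Lambda_k$ lies within $2\theta\,2^{-k}$ of $\Gamma$: if $P\in\Tree(R)\subset S$ apply $(f)$ to $P$ itself, while if $P\in\Stop(R)$ apply it to its parent $P(P)\in\Tree(R)\subset S$. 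Consequently
\[
\textstyle\bigcup_{P\in\Lambda_k}P\subset \bigl\{y:\dist(y,\Gamma)\leq c\theta\,2^{-k}\bigr\}\cap A\bigl(x,\epsilon-C'2^{-k},\epsilon+C'2^{-k}\bigr)
\]
for some constants $c,C'$ depending only on $C$ and $\theta$.

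The heart of the argument is then the purely geometric claim that, provided $\eta$ and $\theta$ are sufficiently small,
\[
\HH^n\bigl(\Gamma\cap A(x,\epsilon-\delta,\epsilon+\delta)\bigr)\lesssim\epsilon^{n-1}\,\delta\qquad\text{for every }0<\delta\lesssim\epsilon.
\]
To establish it, parametrise $\Gamma$ as the graph of an $\eta$-Lipschitz map $\phi\colon V\to V^\perp$ over an $n$-plane $V$, and consider $\wit f(u):=|(u,\phi(u))-x|$. With $h:=\dist(x,\Gamma)\leq\theta\ell(R)\lesssim\theta\epsilon$, a direct computation shows $|\nabla\wit f(u)|\gtrsim 1$ as soon as $|u-\pi_V(x)|\geq\epsilon/2$, whereas for $|u-\pi_V(x)|<\epsilon/2$ one gets $\wit f(u)\leq(1-c)\epsilon<\epsilon-\delta$ (the complementary range $\delta\gtrsim\epsilon$ being trivial). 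Thus the level set $\{\wit f\in[\epsilon-\delta,\epsilon+\delta]\}$ is confined to an annulus in $V$ of width $\lesssim\delta$ and radius $\lesssim\epsilon$, whose $n$-volume is $\lesssim\epsilon^{n-1}\delta$; this transfers to $\Gamma$ via the nearly-isometric parametrisation.

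Finally, I cover $\Gamma\cap A(x,\epsilon-C'2^{-k},\epsilon+C'2^{-k})$ by $N\lesssim\epsilon^{n-1}\cdot 2^{k(n-1)}$ balls of radius $2^{-k}$ centred on $\Gamma$, enlarge each so as to absorb the $c\theta 2^{-k}$-neighbourhood, and apply AD regularity ($\mu$-mass of each ball $\lesssim 2^{-kn}$) to get $\mu(\bigcup_{P\in\Lambda_k}P)\lesssim N\cdot 2^{-kn}\lesssim\epsilon^{n-1}\,2^{-k}\lesssim\ell(R)^{n-1}\,2^{-k}$. I expect the geometric estimate to be the main obstacle: the naive bound $\HH^n(\Gamma\cap A)\lesssim\epsilon^n$ would destroy the $\delta/\epsilon$ gain, and both the smallness of $\eta$ (so that $\Gamma$ stays close to an $n$-plane) and of $\theta$ (so that $x$ is much closer to $\Gamma$ than $\epsilon$) are essential in order to rule out the tangential regime in which a thin spherical shell meets the graph in a nearly-disk-shaped rather than annulus-shaped set.
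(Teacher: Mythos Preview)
Your argument is essentially the natural one, and it matches what the paper does: the paper itself does not prove this lemma but refers to \cite[Lemma~5.9]{MT}, observing only that the case $R\in\BB$ is trivial---exactly as you handle it. The main case $R\in\GG$ is indeed the geometric problem of intersecting a thin spherical shell centred near a flat Lipschitz graph with that graph, and your reduction to this problem via property~$(f)$ is correct.

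One step deserves tightening. From the lower bound $|\nabla\wit f(u)|\gtrsim1$ alone you cannot conclude that $\{\wit f\in[\epsilon-\delta,\epsilon+\delta]\}$ is literally contained in a Euclidean annulus in $V$ of width $\lesssim\delta$: the crude two–sided bound $|u-x'|\le\wit f(u)\le(1+\eta)|u-x'|+O(h)$ only gives width $\lesssim\delta+(\eta+\theta)\epsilon$, which is useless when $2^{-k}\ll(\eta+\theta)\ell(Q)$. What actually works is the \emph{radial} derivative: writing $g_\omega(t)=\wit f(x'+t\omega)$ for $\omega\in S^{n-1}$, your computation in fact shows $g_\omega'(t)\ge\tfrac{t-\eta|\phi-x''|}{g_\omega(t)}\gtrsim1$ on the relevant range, so $g_\omega$ is strictly increasing there and $\{t:g_\omega(t)\in[\epsilon-\delta,\epsilon+\delta]\}$ is a single interval of length $\lesssim\delta$. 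Integrating in polar coordinates about $x'$ (with Jacobian $t^{n-1}\lesssim\epsilon^{n-1}$) then yields $\LL^n\bigl(\{\wit f\in[\epsilon-\delta,\epsilon+\delta]\}\bigr)\lesssim\epsilon^{n-1}\delta$, which is what you need. Equivalently one can invoke the coarea formula together with the containment $\{\wit f=t\}\subset\{|u-x'|\le 2\epsilon\}$. With this clarification your covering argument goes through: a maximal $2^{-k}$-separated net in $\Gamma\cap A(x,\epsilon-C''2^{-k},\epsilon+C''2^{-k})$ projects under $\pi_V$ to a $c\,2^{-k}$-separated set inside a region of $\LL^n$-measure $\lesssim\epsilon^{n-1}2^{-k}$, hence has cardinality $N\lesssim(\epsilon 2^{k})^{n-1}$, and the $\mu$-mass bound follows from upper AD regularity.
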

In the lemma, if $\epsilon-C2^{-k}<0$ we set
$A(x,\epsilon-C2^{-k},\epsilon+C2^{-k}):=\overline{B(x,\epsilon+C2^{-k})}$.
For the proof, see \cite[Lemma 5.9]{MT}. In fact, in this reference the annuli estimates are proved only for $R\in\GG$. However, for $R\in\BB$, the inequality \eqref{eq4974} is trivial. {Further, in \cite[Lemma 5.9]{MT} one states that the result holds only for some constant $C$ depending on $n$, $d$, and the AD-regularity constant of $\mu$, and with a slight difference in the definition of $V(Q)$. However, it is trivial to check that this extends to the more general version above.}


\section{$\VV_\rho\circ\TT:M(\R^d)\to L^{1,\infty}(\mu)$ is a bounded operator}\label{sec3}

In this section we will prove the following result.

\begin{theorem}\label{weakL1 thm sl}
Let $\mu$ be a uniformly $n$-rectifiable measure in $\R^d$. Let $K$ be an odd kernel satisfying \eqref{4eq333} and consider the operator $T$ associated to $K$ defined in \eqref{eqtkmu**}.
 Then, for $\rho>2$,
\begin{itemize}
\item [$(i)$] $\VV^\SSS_\rho\circ\TT:M(\R^d)\to L^{1,\infty}(\mu)$  is bounded,
\item [$(ii)$] $\VV^\LL_\rho\circ\TT:M(\R^d)\to L^{1,\infty}(\mu)$  is bounded.
\end{itemize}
In particular, $\VV_\rho\circ\TT$ is a bounded operator from $M(\R^d)$ to $L^{1,\infty}(\mu)$ for all $\rho>2$.
\end{theorem}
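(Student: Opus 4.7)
The plan is to follow the classical Calder\'on--Zygmund scheme, adapted to the (sublinear but nonlinear) variation. After standard reductions, I would apply Lemma \ref{lema CZ} to $\nu$ at level $\lambda>2^{d+1}\|\nu\|/\|\mu\|$ to obtain cubes $\{Q_j\}$, $R_j=6Q_j$, the functions $f$ and $\{b_j\}$, and the splitting $\nu=g\mu+\nu_b$ with $g:=f\chi_{\R^d\setminus\Omega}+\sum_j b_j$ and $\nu_b:=\sum_j(w_j\nu-b_j\mu)$. Then $\|g\|_{L^\infty(\mu)}\lesssim\lambda$, $\|g\|_{L^1(\mu)}\lesssim\|\nu\|$, and the exceptional set $E:=\bigcup_j 2R_j$ satisfies $\mu(E)\lesssim\|\nu\|/\lambda$ by \eqref{lema CZ 1} and the finite overlap of the $Q_j$. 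The good part is handled at once by the $L^2(\mu)$ boundedness of $\VV_\rho\circ\TT^\mu$ proved in \cite{MT} and Chebyshev's inequality:
\[
\mu\bigl\{\VV^\#_\rho\TT(g\mu)>\lambda/2\bigr\}\lesssim\lambda^{-2}\|g\|_{L^2(\mu)}^2\leq\lambda^{-1}\|g\|_{L^1(\mu)}\lesssim\|\nu\|/\lambda,\qquad\#\in\{\SSS,\LL\}.
\]
It therefore suffices to prove the stronger $L^1$-type estimate $\int_{\supp\mu\setminus E}\VV^\#_\rho\TT\nu_b\,d\mu\lesssim\|\nu\|$ for each of $\#=\SSS$ and $\#=\LL$, and then invoke Chebyshev one more time.

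For the short variation $(i)$ I would exploit the smoothness of $K$ together with the vanishing moment $\int(w_j\,d\nu-b_j\,d\mu)=0$ of each block of $\nu_b$. For $x$ outside $2R_j$ and scales $\epsilon$ with $\epsilon\lesssim\dist(x,R_j)$, the standard Calder\'on--Zygmund estimate
\[
|T_\epsilon(w_j\nu-b_j\mu)(x)|\lesssim\frac{\ell(R_j)\bigl(|\nu|(R_j)+\|b_j\|_{L^1(\mu)}\bigr)}{|x-z_{R_j}|^{n+1}}
\]
holds uniformly in $\epsilon$, and this sums over $j$ and integrates in $x\in\supp\mu\setminus E$ to a bound $\lesssim\|\nu\|$ after using \eqref{lema CZ 5} and the finite overlap. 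The remaining truly short-in-scale oscillations, where $\epsilon$ lies in an annulus meeting $R_j$, would be controlled by the pointwise comparison with the smooth truncation $\TT_\varphi$ of the preliminaries plus the embedding $\|\cdot\|_{\ell^\rho}\leq\|\cdot\|_{\ell^2}$ (valid since $\rho>2$), reducing the matter to a smooth square function whose $L^1(\mu)$ norm is dominated by $\MM|\nu_b|$ and hence by $\|\nu\|$.

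For the long variation $(ii)$ the plan is to use the corona decomposition of $\mu$: writing $\DD^\mu=\BB\uplus\bigcup_{S\in\Trees}S$, each tree $S$ lies in a thin neighbourhood of a Lipschitz graph $\Gamma_S$ with slope $<\eta$ by property $(f)$. Fix $x\in\supp\mu\setminus E$, and for each scale $\epsilon$ let $Q=Q(x,\epsilon)\in\DD^\mu$ be the unique cube with $x\in Q$ and $\ell(Q)\sim\epsilon$. I would split any admissible non-increasing sequence $\{\epsilon_m\}$ according to which tree $S\in\Trees$ or bad cube in $\BB$ contains $Q(x,\epsilon_m)$. Within a single tree $S$, the idea is to carry out a pointwise comparison, modulo errors of the form $|\nu_b|(B_P)/\ell(P)^n$ for $P$ in $\Top_\GG\cup\BB\cup\partial\Trees$, between the long variation at $x$ and the long variation of the analogous odd-kernel operator on $\Gamma_S$ acting on a transferred version of $\nu_b$; by \cite{M} the latter is $L^p(\HH^n|_{\Gamma_S})$-bounded for every $1<p<\infty$. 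The Carleson character of $\Top_\GG\cup\BB$ comes from properties $(b)$, $(e)$ of the corona decomposition, and that of $\partial\Trees$ from the lemma preceding Lemma \ref{annulus}. Then Lemma \ref{lemcarleson} with any $p\in(1,\infty)$ gives
\[
\sum_{P\in\Top_\GG\cup\BB\cup\partial\Trees}\Bigl(\frac{|\nu_b|(B_P)}{\ell(P)^n}\Bigr)^p\ell(P)^n\lesssim\lambda^{p-1}\|\nu\|,
\]
which absorbs the error terms with exactly the $\|\nu\|/\lambda$ scaling sought by the weak-$L^1$ inequality.

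The hardest part will be the tree-by-tree pointwise comparison: to design it so that the main term transfers cleanly to an odd smooth-kernel singular integral on $\Gamma_S$ (where \cite{M} supplies both the $L^p$ and, via Kolmogorov or standard Chebyshev arguments, the $L^{1,\infty}$ bound), while the remainder naturally organizes itself into Carleson sums of the exact shape required by Lemma \ref{lemcarleson}. The annuli estimate of Lemma \ref{annulus} is the essential technical ingredient in this step: it bounds the number of stopping cubes of $\Tree(R)$ that can interact with a given thin annulus around $x$, and thereby prevents the errors from accumulating too rapidly across scales when the long sequence $\{\epsilon_m\}$ crosses many generations within one tree or passes between neighbouring trees.
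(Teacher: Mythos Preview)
Your overall Calder\'on--Zygmund setup is correct, but you have the roles of $(i)$ and $(ii)$ essentially reversed relative to the paper, and your sketch for $(i)$ has a real gap.

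For $(ii)$, the paper does something much simpler than your proposed tree-by-tree transfer to Lipschitz graphs: it shows the pointwise inequality
\[
(\VV^\LL_\rho\circ\TT)\nu(x)\lesssim(\VV_\rho\circ\TT_\varphi)\nu(x)+(\VV^\SSS_\rho\circ\TT)\nu(x)
\]
for every measure $\nu$, via the identity $\chi_{[1,\infty)}-\varphi_\R=\int_{1/4}^4\varphi'_\R(s)\bigl(\chi_{[1,\infty)}-\chi_{[s,\infty)}\bigr)\,ds$ and Minkowski's integral inequality. Since $\VV_\rho\circ\TT_\varphi:M(\R^d)\to L^{1,\infty}(\mu)$ is already known from \cite{MT}, part $(ii)$ then follows from $(i)$ with no further work. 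Your corona-based plan for $(ii)$ may be workable, but is considerably more involved, and the ``transfer to $\Gamma_S$'' step for a general signed measure $\nu_b$ is not easy to make precise.

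The genuine difficulty lies in $(i)$, and here your sketch is incomplete. After the CZ decomposition, the good part, and the near-diagonal and interior pieces (your $S_i$-type argument and the paper's agree), one must control, for $x\notin\bigcup_j 2Q_j$, the \emph{boundary sum}
\[
S_b(x)=\Bigl(\sum_{k}\sum_{\epsilon_m,\epsilon_{m+1}\in I_k}\Bigl|\sum_{j:\,R_j\cap\partial A(x,\epsilon_{m+1},\epsilon_m)\neq\varnothing}T_{\epsilon_m,\epsilon_{m+1}}\nu_b^j(x)\Bigr|^\rho\Bigr)^{1/\rho}.
\]
Your proposal to handle this by ``comparison with the smooth truncation $\TT_\varphi$'' does not help: $(T_\epsilon-T_{\varphi_\epsilon})\nu_b^j(x)$ is still an integral over an annulus of width $\approx\epsilon$ with no cancellation, so the comparison leaves a term of the same shape as $S_b$. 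Moreover, your claimed reduction to ``a smooth square function whose $L^1(\mu)$ norm is dominated by $\MM|\nu_b|$ and hence by $\|\nu\|$'' cannot be right as stated, since $\MM|\nu_b|\notin L^1(\mu)$ in general. The paper instead proves $\int_{\supp\mu\setminus\wih\Omega}S_b^2\,d\mu\lesssim\lambda\|\nu\|$, and \emph{this} is where the corona decomposition, the annuli estimate (Lemma \ref{annulus}), and Lemma \ref{lemcarleson} are all deployed. One organizes the scales via $\DD^\mu=\BB\cup\biguplus_{S\in\Trees}S$; for $P$ in a tree, the $R_j$'s hitting $\partial A(x,\epsilon_{m+1},\epsilon_m)$ are classified according to whether some $R\in\Tree(Q)$ with $\ell(R_j)\in I_R$ captures $R_j$ (case $J_1$) or only some $R\in\partial\Tree(Q)$ does (case $J_2$). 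In both cases Lemma \ref{annulus} bounds how many such $R$'s can meet a thin annulus of fixed radius, and the resulting sums are rewritten in Carleson form so that Lemma \ref{lemcarleson} delivers the $\lambda\|\nu\|$ bound. None of this structure is present in your plan for $(i)$; in effect, the corona machinery you reserved for $(ii)$ belongs in $(i)$.
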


Notice that by the triangle inequality we can easily split the variation operator into the short and long variations, that is,
$(\VV_\rho\circ\TT^\mu)f\leq (\VV^\SSS_\rho\circ\TT^\mu)f+(\VV^\LL_\rho\circ\TT^\mu)f.$
Therefore, that $\VV_\rho\circ\TT$ is a bounded operator from $M(\R^d)$ to $L^{1,\infty}(\mu)$ for all $\rho>2$ follows from  $(i)$ and $(ii)$ above, whose proofs are given below.

We will use the next result, which is contained in \cite[Theorem 1.3 and Corollary 4.2]{MT}.

\begin{theorem}\label{weakL1 thm smooth}
Let $\mu$ be a uniformly $n$-rectifiable measure in $\R^d$. Let $K$ be an odd kernel satisfying \eqref{4eq333} and consider the operator $T$ associated to $K$ defined in \eqref{eqtkmu**}.
 Then, for $\rho>2$,
\begin{itemize}
\item [$(i)$] $\VV_\rho\circ\TT^\mu:L^{2}(\mu)\to L^{2}(\mu)$  is bounded,
\item [$(ii)$] $\VV_\rho\circ\TT_\varphi:M(\R^d)\to L^{1,\infty}(\mu)$  is bounded.
\end{itemize}
\end{theorem}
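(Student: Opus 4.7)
The strategy is to prove (i) and (ii) by applying the Calder\'on--Zygmund decomposition of $\nu$ with respect to $\mu$ (Lemma \ref{lema CZ}), combining it with the $L^2$ bound from Theorem \ref{weakL1 thm smooth}(i) for $\VV_\rho \circ \TT^\mu$ and the weak-$(1,1)$ bound from Theorem \ref{weakL1 thm smooth}(ii) for the smooth-truncation variant $\VV_\rho \circ \TT_\varphi$. The ``in particular'' assertion is immediate from the triangle inequality $\VV_\rho \circ \TT \le \VV^\SSS_\rho \circ \TT + \VV^\LL_\rho \circ \TT$ that the author notes right after the statement.

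Fix $\nu \in M(\R^d)$ and $\lambda > 2^{d+1}\|\nu\|/\|\mu\|$, and apply Lemma \ref{lema CZ} to $|\nu|$ to write $\nu = g\mu + \nu_b$ with $\|g\|_{L^\infty(\mu)} \lesssim \lambda$, $\|g\|_{L^1(\mu)} \lesssim \|\nu\|$ (hence $\|g\|_{L^2(\mu)}^2 \lesssim \lambda \|\nu\|$), and $\nu_b = \sum_j (w_j\nu - b_j\mu)$ with atoms $\mu_j := w_j\nu - b_j\mu$ supported in $R_j = 6Q_j$, satisfying $\int d\mu_j = 0$ and $\|\mu_j\| \lesssim \nu(Q_j)$. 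The set $\Omega^* := \bigcup_j 10 Q_j$ has $\mu(\Omega^*) \lesssim \|\nu\|/\lambda$. For the good part, Theorem \ref{weakL1 thm smooth}(i) and Chebyshev yield
$$\mu\bigl\{x : (\VV_\rho \circ \TT^\mu) g(x) > \lambda\bigr\} \lesssim \|g\|_{L^2(\mu)}^2 / \lambda^2 \lesssim \|\nu\|/\lambda,$$
which handles both (i) and (ii) since $\VV^\SSS_\rho, \VV^\LL_\rho \le \VV_\rho$.

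For the bad part on $\R^d \setminus \Omega^*$, I would use the sublinearity $\VV^*_\rho \circ \TT\nu_b \le \sum_j \VV^*_\rho \circ \TT\mu_j$ (with $* \in \{\SSS,\LL\}$) and aim at the $L^1$ bound $\int_{\R^d \setminus 10Q_j}\VV^*_\rho \circ \TT \mu_j \, d\mu \lesssim \|\mu_j\|$ for each atom, then sum using $\sum_j \nu(Q_j) \le \|\nu\|$ and conclude by Chebyshev. For each atom $\mu_j$, the $\epsilon$-axis splits into three regimes: when $\epsilon \ll \dist(x, R_j)$, the cancellation $\int d\mu_j = 0$ together with \eqref{4eq333} yields the classical decay $|T_\epsilon \mu_j(x)| \lesssim \ell(R_j)\,\|\mu_j\|/|x - z_j|^{n+1}$; when $\epsilon \gg \dist(x, R_j) + \ell(R_j)$ one has $T_\epsilon\mu_j(x) = 0$; and in the narrow transition band $\epsilon \approx \dist(x, R_j)$, which spans only $O(1)$ dyadic scales, the variation is dominated by a single annular mass $|\mu_j|(R_j)/\dist(x, R_j)^n$. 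For the long variation, one may alternatively compare $\VV^\LL_\rho \circ \TT \mu_j \le \VV^\LL_\rho \circ \TT_\varphi \mu_j + E_j$, treating the main term by Theorem \ref{weakL1 thm smooth}(ii) and the error via the pointwise bound $|(T_\epsilon - T_{\varphi_\epsilon})\mu_j(x)| \lesssim |\mu_j|(A(x, \epsilon/4, 2\epsilon))/\epsilon^n$.

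The main obstacle lies in the long-variation error term: crude pointwise bounds on $|T_\epsilon - T_{\varphi_\epsilon}|\nu$ at each dyadic scale give a sum which does not converge. The remedy exploits that in $\VV^\LL_\rho$ each dyadic scale contributes at most $O(1)$ terms, so the error reduces to an $\ell^\rho$ sum of quantities $|\nu|(A(x, \approx 2^{-k}))/2^{-kn}$ over $k$; this sum is then controlled in weak $L^1$ by combining Lemma \ref{lemcarleson} with the corona decomposition structure and the annuli estimates of Lemma \ref{annulus}, which together encode enough geometry of the uniformly rectifiable $\mu$ to turn the scale-by-scale error into a summable contribution against $\mu$.
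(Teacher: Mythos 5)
Your proposal is circular: you propose to establish (i) and (ii) ``by \dots\ combining it with the $L^2$ bound from Theorem \ref{weakL1 thm smooth}(i) for $\VV_\rho\circ\TT^\mu$ and the weak-$(1,1)$ bound from Theorem \ref{weakL1 thm smooth}(ii) for \dots\ $\VV_\rho\circ\TT_\varphi$''. Those two bounds \emph{are} the two assertions you have been asked to prove; invoking them as black-box ingredients proves nothing.

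What you have actually written is the blueprint for Theorem \ref{weakL1 thm sl}, not for Theorem \ref{weakL1 thm smooth}. The tell-tale signs: you refer to an ``in particular'' assertion and to the splitting $\VV_\rho\circ\TT\le\VV^\SSS_\rho\circ\TT+\VV^\LL_\rho\circ\TT$, but neither appears in Theorem \ref{weakL1 thm smooth} --- they are in Theorem \ref{weakL1 thm sl}. You run a Calder\'on--Zygmund decomposition on a measure $\nu\in M(\R^d)$ and split into short and long variations, again the strategy the paper uses for Theorem \ref{weakL1 thm sl}; but Theorem \ref{weakL1 thm smooth} never mentions $\VV^\SSS_\rho$ or $\VV^\LL_\rho$, and its part (i) is a pure $L^2(\mu)\to L^2(\mu)$ statement for which no CZ decomposition of an external measure $\nu$ is relevant. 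Indeed, in the paper's proof of Theorem \ref{weakL1 thm sl}, parts (i) and (ii) of Theorem \ref{weakL1 thm smooth} are cited at essentially the same junctures where you cite them --- confirming that your sketch is an argument built \emph{on top of} Theorem \ref{weakL1 thm smooth}, not a proof \emph{of} it.

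The underlying reason this went astray is that the paper does not prove Theorem \ref{weakL1 thm smooth} at all. It is introduced with the sentence ``We will use the next result, which is contained in \cite[Theorem 1.3 and Corollary 4.2]{MT}'', i.e.\ it is imported as an external input from the companion paper \cite{MT}. The $L^2(\mu)$ bound in (i) is the main theorem of \cite{MT}, proved there by a substantial argument (corona decomposition, reduction to Lipschitz graphs, and quasiorthogonality-type estimates), and (ii) is the weak-$(1,1)$ estimate for the smooth-truncation family deduced in \cite{MT} from (i). Neither is recoverable from the lemmas assembled in the present paper, and both sit logically upstream of the short/long-variation argument you outline. A correct response to this prompt would either reproduce the relevant arguments from \cite{MT}, or --- matching what the paper does --- simply observe that the statement is a citation.
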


\begin{proof}[Proof of {\em Theorem \ref{weakL1 thm sl}$(ii)$}]
We will deal with the long variation $\VV^\LL_\rho\circ\TT$ by comparing it with the smoothened version $\VV_\rho\circ\TT_\varphi$, using Theorem \ref{weakL1 thm smooth}$(ii)$, estimating the error terms by the short variation $\VV^\SSS_\rho\circ\TT$, and applying Theorem \ref{weakL1 thm sl}$(i)$. More precisely, the triangle inequality yields
\begin{equation*}
\begin{split}
|T_{\epsilon}\nu(x)-T_{\delta}\nu(x)|
\leq|T_{\varphi_\epsilon}\nu(x)-T_{\varphi_\delta}\nu(x)|
+|T_{\epsilon}\nu(x)-T_{\varphi_\epsilon}\nu(x)|
+|T_{\delta}\nu(x)-T_{\varphi_\delta}\nu(x)|
\end{split}
\end{equation*}
for any $0<\delta\leq\epsilon$. Therefore,
\begin{equation}\label{eq1}
\begin{split}
\big((&\VV^\LL_\rho\circ\TT)\nu(x)\big)^{\rho}
\lesssim\big((\VV_\rho\circ\TT_\varphi)\nu(x)\big)^\rho\\
&\quad+\sup_{\{\epsilon_m\}}\sum_{\begin{subarray}{c}m\in\Z:\,\epsilon_m\in I_j,\,\epsilon_{m+1}\in I_k\\ \text{ for some }j<k\end{subarray}}
\Big(|T_{\epsilon_m}\nu(x)-T_{\varphi_{\epsilon_m}}\nu(x)|^\rho
+|T_{\epsilon_{m+1}}\nu(x)-T_{\varphi_{\epsilon_{m+1}}}\nu(x)|^\rho\Big)\\
&\lesssim\big((\VV_\rho\circ\TT_\varphi)\nu(x)\big)^\rho
+\sup_{\begin{subarray}{c}\{\epsilon_m\}:\,\epsilon_m\in I_m\\
\text{for all }m\in\Z\end{subarray}}\sum_{m\in\Z}
|T_{\epsilon_m}\nu(x)-T_{\varphi_{\epsilon_m}}\nu(x)|^\rho.
\end{split}
\end{equation}

Let us estimate the second term on the right hand side of (\ref{eq1}).
Since $\chi_{[4,\infty)}\leq\varphi_\R\leq\chi_{[1/4,\infty)}$ by definition,  we have
$$\chi_{[1,\infty)}(t)-\varphi_{\R}(t)=
\int_{1/4}^4\varphi'_\R(s)(\chi_{[1,\infty)}
-\chi_{[s,\infty)})(t)\,ds$$
for all $t\geq0$.
This means that $\chi_{[1,\infty)}-\varphi_{\R}$ is a convex combination of the functions $\chi_{[1,\infty)}-\chi_{[s,\infty)}$ for $1/4\leq s\leq 4$. Then, Fubini's theorem gives
\begin{equation}\label{eq2}
\begin{split}
T_{\epsilon}\nu(x)-T_{\varphi_\epsilon}\nu(x)
&=\int\Big(\chi_{[1,\infty)}(|x-y|^2/\epsilon^2)-\varphi_\R(|x-y|^2/\epsilon^2)\Big)K(x-y)\,d\nu(y)\\
&=\int_{1/4}^4\varphi'_\R(s)\int(\chi_{[\epsilon,\infty)}-\chi_{[\epsilon\sqrt{s},\infty)})
(|x-y|)K(x-y)\,d\nu(y)\,ds\\
&=\int_{1/4}^4\varphi'_\R(s)\Big(T_\epsilon\nu(x)-T_{\epsilon\sqrt{s}}\nu(x)\Big)\,ds.
\end{split}
\end{equation}

It is easy to see that
\begin{equation}\label{eq3}
\bigg(\sum_{m\in\Z}
|T_{\epsilon_m}\nu(x)-T_{\epsilon_m\sqrt{s}}\nu(x)|^\rho\bigg)^{1/\rho}
\lesssim(\VV^\SSS_\rho\circ\TT)\nu(x)
\end{equation} for all $s\in[1/4,4]$ with uniform bounds, where $\{\epsilon_m\}_{m\in\Z}$ is any sequence such that  $\epsilon_m\in I_m$ for all $m\in\Z$. Using (\ref{eq2}), Minkowski's integral inequality and (\ref{eq3}), we get
\begin{equation}\label{eq4}
\begin{split}
\sup_{\begin{subarray}{c}\{\epsilon_m\}:\,\epsilon_m\in I_m\\
\text{for all }m\in\Z\end{subarray}}\bigg(\sum_{m\in\Z}&
|T_{\epsilon_m}\nu(x)-T_{\varphi_{\epsilon_m}}\nu(x)|^\rho\bigg)^{1/\rho}
\\
&\leq\sup_{\begin{subarray}{c}\{\epsilon_m\}:\,\epsilon_m\in I_m\\\text{for all }m\in\Z\end{subarray}}
\int_{1/4}^4\varphi'_\R(s)\bigg(\sum_{m\in\Z}
|T_{\epsilon_m}\nu(x)-T_{\epsilon_m\sqrt{s}}\nu(x)|^\rho\bigg)^{1/\rho}
\,ds\\
&\lesssim\int_{1/4}^4\varphi'_\R(s)(\VV^\SSS_\rho\circ\TT)\nu(x)\,ds
\lesssim(\VV^\SSS_\rho\circ\TT)\nu(x).
\end{split}
\end{equation}
Finally, applying (\ref{eq4}) to (\ref{eq1}) yields
\begin{equation*}
(\VV^\LL_\rho\circ\TT)\nu(x)
\lesssim(\VV_\rho\circ\TT_\varphi)\nu(x)
+(\VV^\SSS_\rho\circ\TT)\nu(x),
\end{equation*}
and Theorem \ref{weakL1 thm sl}$(ii)$ follows by
Theorems \ref{weakL1 thm smooth}$(ii)$ and  \ref{weakL1 thm sl}$(i)$.
\end{proof}

\begin{proof}[Proof of {\em Theorem \ref{weakL1 thm sl}$(i)$}]
We have to prove that there exists some constant $C>0$ such that
\begin{equation}\label{eq5}
\mu\big(\big\{x\in\R^d\,:\,(\VV^\SSS_\rho\circ\TT)\nu(x)>\lambda\big\}\big)
\leq \frac{C}{\lambda}\,\|\nu\|
\end{equation}
for all $\nu\in M(\R^d)$ and all $\lambda>0$.
The proof of (\ref{eq5}) combines the Calder\'{o}n-Zygmund decomposition developed in Lemma \ref{lema CZ}, the corona decomposition of $\mu$ described in Subsection \ref{5ss corona decomposition}, and other standard techniques for proving variational inequalities. We will start following the lines of the proof of \cite[Theorem 1.4]{M}, until the application of the
corona decomposition.

Since $\VV^\SSS_\rho\circ\TT$ is sublinear, we can assume without loss of generality that $\nu$ is a positive measure. Let us first check that we can also assume both $\mu$ and $\nu$ to be compactly supported. Given $\nu\in M(\R^d)$ and $M\in\N$, set
$$\nu_M := \chi_{B(0,2^M)}\nu.$$ If $\diam(\supp\mu)<+\infty$ then $\mu$ is compactly supported. In case $\diam(\supp\mu)=+\infty$ we are going to restrict $\mu$ to a set $K_N\subset \R^d$ such that $\mu|_{K_N}$ it is still uniformly rectifiable (with constants independent of $N$). For this purpose, for each $N\in \N$ consider the family of cubes $P_i^N\in\DD^\mu_{-N}$, $i\in I_N$, (thus $\ell(P_i^N)=2^N$ for all $i\in I_N$) such that
$B(0,2^N)\cap P_i^N\neq\varnothing$. We denote
$$K_N= \bigcup_{i\in I_N} P_i^N \quad \mbox{ and }\quad \mu_N= \mu|_{K_N}.$$
It is immediate to check that $\mu|_{P_i^N}$ is uniformly rectifiable for each $i,N$. Since $K_N$ is a finite union of uniformly rectifiable sets (because $\# I_N$ is uniformly bounded), $\mu_N$ is also
 uniformly rectifiable, with constants independent of $N$.

Suppose that there exists some constant $C>0$ such that
\begin{equation*}
\mu_{N}\big(\big\{x\in\R^d\,:\,(\VV^\SSS_\rho\circ\TT)\nu_M(x)>\lambda\big\}\big)\leq\frac{C}{\lambda}\,\|\nu_M\|
\end{equation*}
for all $\lambda>0$, all $\nu\in M(\R^d)$ and all $M,N\in\N$.
This implies that
\begin{equation}\label{eq6}
\mu\big(\big\{x\in B(0,2^N):\,(\VV^\SSS_\rho\circ\TT)\nu_M(x)>\lambda\big\}\big)\leq\frac{C}{\lambda}\,\|\nu_M\|
\end{equation}
for all $\lambda>0$, all $\nu\in M(\R^d)$ and all $M,N\in\N$.
 It is not hard to show that
$$\left|(\VV^\SSS_\rho\circ\TT)\nu(x)-(\VV^\SSS_\rho\circ\TT)\nu_N(x)\right|
\leq\frac{C'}{(2^M-2^N)^n}\,\nu\big(\R^d\setminus B(0,2^M)\big)$$
for all $x\in B(0,2^N)$ and all $M>N>1$.
In particular, if $M\to\infty$ then $(\VV^\SSS_\rho\circ\TT)\nu_M(x)\to (\VV^\SSS_\rho\circ\TT)\nu(x)$  uniformly in $B(0,2^N)$. Since (\ref{eq6}) holds for $\nu_M$ by assumption, we deduce that it also holds for $\nu$. Now, by letting $N\to\infty$ and using monotone convergence, (\ref{eq6}) with $\nu_M$ replaced by $\nu$ yields  (\ref{eq5}),
as desired. In conclusion, for proving the theorem, we only have to verify (\ref{eq5}) when $\mu$ and $\nu$ have compact support. Moreover, since (\ref{eq5}) obviously holds for $\lambda\leq2^{d+1}\|\nu\|/\|\mu\|$, we can also restrict ourselves to the case $\lambda>2^{d+1}\|\nu\|/\|\mu\|$.

We are going to verify that we can assume \eqref{eqsupp}, which will allows us to use \eqref{xcvbvcx} in the sequel, when we pursue the Calder\'on-Zygmund decomposition of $\nu$ with respect to $\mu$. Let $M:=\diam(\supp\mu)<+\infty$ and set $\nu_c:=\chi_{\R^d\setminus\mathcal{U}_M(\supp\mu)}\nu$. Then $\dist(\supp\nu_c,\supp\mu)\geq M$. By Chebyshev's inequality,
\begin{equation}\label{eq7vvvv}
\begin{split}
\mu\big(\big\{x\in\R^d\,&:\,(\VV^\SSS_\rho\circ\TT)\nu_c(x)>\lambda\big\}\big)
\leq\frac{1}{\lambda}\int(\VV^\SSS_\rho\circ\TT)\nu_c(x)\,d\mu(x)\\
&\leq\frac{C}{\lambda}\iint|x-y|^{-n}\,d\nu_c(y)\,d\mu(x)
\leq \frac{C}{M^{n}\lambda}\,\|\nu_c\|\|\mu\|.
\end{split}
\end{equation}
For any $x\in\supp\mu$, $\|\mu\|=\mu(B(x,M))\lesssim M^n$ by the AD regularity assumption on $\mu$. Thus \eqref{eq7vvvv} yields
\begin{equation}\label{eq7vvvvv}
\mu\big(\big\{x\in\R^d\,:\,(\VV^\SSS_\rho\circ\TT)\nu_c(x)>\lambda\big\}\big)\leq\frac{C}{\lambda}\,\|\nu_c\|\leq\frac{C}{\lambda}\,\|\nu\|,
\end{equation}
with $C$ independent of $M$. Note that $\nu=\nu_c+(\nu-\nu_c)$ and $\supp(\nu-\nu_c)\subset\mathcal{U}_{\diam(\supp\mu)}(\supp\mu)$. Using that $\VV^\SSS_\rho\circ\TT$ is sublinear and \eqref{eq7vvvvv} we see that, in order to prove the theorem, it is enough to show that
\begin{equation*}
\mu\big(\big\{x\in\R^d\,:\,(\VV^\SSS_\rho\circ\TT)(\nu-\nu_c)(x)>\lambda\big\}\big)
\leq \frac{C}{\lambda}\,\|\nu\|,
\end{equation*}
that is, we can assume that $\nu$ satisfies \eqref{eqsupp}. In conclusion, for proving \eqref{eq5}, from now on we assume that both $\mu$ and $\nu$ are compactly supported and they satisfy \eqref{eqsupp}.

Let $\{Q_j\}_j$ be the almost disjoint
family of cubes of Lemma \ref{lema CZ}, and set $\Omega:=\bigcup_jQ_j$ and $R_j:=6Q_j$.
Then we can write $\nu=g\mu+\nu_b$, with
$$g\mu:=\chi_{\R^d\setminus\Omega}\nu+ \sum_j b_j\mu\quad\text{and}\quad
\nu_b:=\sum_j\nu_b^j:=\sum_j\left(w_j\nu-b_j\mu\right),$$
where the $b_j$'s satisfy (\ref{lema CZ 4}), (\ref{lema CZ 5}) and (\ref{lema CZ 6}), and
$w_j:=\chi_{Q_j}\left(\sum_k \chi_{Q_k}\right)^{-1}$. Since \eqref{eqsupp} holds, in the sequel we can also assume that \eqref{xcvbvcx} holds.

Since $\VV^\SSS_\rho\circ\TT$ is sublinear,
\begin{equation}\label{eq7}
\begin{split}
&\mu\big(\big\{x\in\R^d\,:\,(\VV^\SSS_\rho\circ\TT)\nu(x)>\lambda\big\}\big)\\
&\qquad\leq\mu\big(\big\{x\in\R^d\,:\,(\VV^\SSS_\rho\circ\TT^\mu)g(x)>{\lambda}/{2}\big\}\big)
+\mu\big(\big\{x\in\R^d\,:\,(\VV^\SSS_\rho\circ\TT)\nu_b(x)>{\lambda}/{2}\big\}\big).
\end{split}
\end{equation}
We obviously have $\VV^\SSS_\rho\circ\TT^\mu\leq\VV_\rho\circ\TT^\mu$, so Theorem \ref{weakL1 thm smooth}$(i)$ yields that $\VV^\SSS_\rho\circ\TT^{\mu}$ is bounded in $L^2(\mu)$. Note that $|g|\leq C\lambda$ by (\ref{lema CZ 3}) and (\ref{lema CZ 6}). Hence, using (\ref{lema CZ 5}),
\begin{equation}\label{eq8}
\begin{split}
\mu\big(\big\{x\in\R^d:(\VV^\SSS_\rho\circ\TT^\mu)g(x)>{\lambda}/{2}\big\}\big)
&\lesssim\frac{1}{\lambda^{2}}\int|(\VV^\SSS_\rho\circ\TT^\mu)g|^2\,d\mu
\lesssim\frac{1}{\lambda^{2}}\int|g|^2\,d\mu\\
&\lesssim\frac{1}{\lambda}\int|g|\,d\mu
\leq\frac{1}{\lambda}\bigg(\nu(\R^d\setminus\Omega)+\sum_j\int_{R_j}|b_j|\,d\mu\bigg)\\
&\leq\frac{1}{\lambda}\bigg(\nu(\R^d\setminus\Omega)+\sum_j\nu(Q_j)\bigg)\lesssim\frac{\|\nu\|}{\lambda}.
\end{split}
\end{equation}

Set $\wih\Omega:=\bigcup_j2Q_j$. By (\ref{lema CZ 1}), we have
$\mu(\wih\Omega)\leq\sum_j\mu(2Q_j)\lesssim\lambda^{-1}\sum_j\nu(Q_j)\lesssim\lambda^{-1}\|\nu\|$. We are going to prove that
\begin{equation}\label{eq9}
\mu\big(\big\{x\in\R^d\setminus\wih\Omega\,:\,(\VV^\SSS_\rho\circ\TT)\nu_b(x)>{\lambda}/{2}\big\}\big)\lesssim\frac{\|\nu\|}{\lambda}.
\end{equation}
Then (\ref{eq5}) follows directly from (\ref{eq7}), (\ref{eq8}), (\ref{eq9}) and the estimate $\mu(\wih\Omega)\lesssim\lambda^{-1}\|\nu\|$ above-mentioned, finishing the proof of Theorem \ref{weakL1 thm sl}$(i)$.

To prove (\ref{eq9}), given $x\in\R^d\setminus\wih\Omega$ we first write
\begin{equation}\label{eq10}
\begin{split}
(\VV^\SSS_\rho\circ\TT)\nu_b(x)
&\leq(\VV^\SSS_\rho\circ\TT)\bigg(\sum_j\chi_{2R_j}(x)\nu_b^j\bigg)(x)
+(\VV^\SSS_\rho\circ\TT)\bigg(\sum_j\chi_{\R^d\setminus2R_j}(x)\nu_b^j\bigg)(x).
\end{split}
\end{equation}
Notice that  $\chi_{2R_j}(x)$ and $\chi_{\R^d\setminus2R_j}(x)$ are evaluated at the fixed point $x$ on the right hand side.

The first term on the right hand side of (\ref{eq10}) is easily handled using the $L^2(\mu)$ boundedness of $\VV^\SSS_\rho\circ\TT^\mu$ and standard estimates. More precisely, since $\VV^\SSS_\rho\circ\TT$ is sublinear,
\begin{equation}\label{eq11}
\begin{split}
(\VV^\SSS_\rho\circ\TT)\bigg(\sum_j&\chi_{2R_j}(x)\nu_b^j\bigg)(x)\\
&\leq\sum_j\chi_{2R_j}(x)(\VV^\SSS_\rho\circ\TT^\mu)b_j(x)
+\sum_j\chi_{2R_j}(x)(\VV^\SSS_\rho\circ\TT^\nu)w_j(x)
\end{split}
\end{equation}
because $\nu_b^j=w_j\nu-b_j\mu$. On one hand, using Theorem \ref{weakL1 thm smooth}$(i)$, that $\mu(2R_j)\lesssim\mu(R_j)$ (by \eqref{xcvbvcx}) and (\ref{lema CZ 5}), we get
\begin{equation}\label{eq12}
\begin{split}
\int_{2R_j}(\VV^\SSS_\rho\circ\TT^\mu)b_j\,d\mu
&\leq\bigg(\int_{2R_j}|(\VV^\SSS_\rho\circ\TT^\mu)b_j|^2
\,d\mu\bigg)^{1/2}\mu(2R_j)^{1/2}
\\&\lesssim\|b_j\|_{L^2(\mu)}\mu(2R_j)^{1/2}
\lesssim\|b_j\|_{L^\infty(\mu)}\mu(R_j)\lesssim\nu(Q_j).
\end{split}
\end{equation}
On the other hand, if $x\in2R_j\setminus2Q_j$ then $\dist(x,Q_j)
\approx\ell(Q_j)$. Therefore, given $k\in\Z$,
\begin{equation}\label{eq13}
\begin{split}
B(x,2^{-k})\cap Q_j=\varnothing\Longleftrightarrow
\dist(x,Q_j)\geq2^{-k}\Longleftrightarrow\ell(Q_j)\gtrsim2^{-k}.
\end{split}
\end{equation}
Since the $\ell^\rho$-norm is not bigger than the $\ell^1$-norm for $\rho\geq1$, and since $\supp w_j\subset Q_j$ and $|w_j|\leq1$, from (\ref{eq13}) and \eqref{T_de} we get
\begin{equation*}
\begin{split}
(\VV^\SSS_\rho\circ\TT^\nu)w_j(x)
&\leq\sup_{\{\epsilon_m\}}
\sum_{k\in\Z}\,\sum_{\epsilon_m,\epsilon_{m+1}\in I_k}
|T^\nu_{\epsilon_m,\epsilon_{m+1}}w_j(x)|\\
&\lesssim\nu(Q_j)
\sum_{k\in\Z:\,B(x,2^{-k})\cap Q_j\neq\varnothing}2^{kn}
\lesssim\nu(Q_j)\ell(Q_j)^{-n},
\end{split}
\end{equation*}
and therefore, using again that $\mu(2R_j)\lesssim \mu(R_j)\approx\ell(R_j)^n\approx\ell(Q_j)^{n}$ by \eqref{xcvbvcx}, we obtain
\begin{equation}\label{eq14}
\begin{split}
\int_{2R_j\setminus2Q_j}(\VV^\SSS_\rho\circ\TT^\nu)w_j\,d\mu
&\lesssim\nu(Q_j)\ell(Q_j)^{-n}\mu(2R_j)
\lesssim\nu(Q_j).
\end{split}
\end{equation}
Finally, applying (\ref{eq12}) and (\ref{eq14}) to (\ref{eq11}), we conclude that
\begin{equation}\label{eq15}
\begin{split}
\int_{\R^d\setminus\wih\Omega}&(\VV^\SSS_\rho\circ\TT)\bigg(\sum_j\chi_{2R_j}(x)\nu_b^j\bigg)(x)\,d\mu(x)\\
&\leq\sum_j\int_{2R_j}(\VV^\SSS_\rho\circ\TT^\mu)b_j\,d\mu
+\sum_j\int_{2R_j\setminus2Q_j}(\VV^\SSS_\rho\circ\TT^\nu)w_j\,d\mu
\lesssim\sum_j\nu(Q_j)\lesssim\|\nu\|.
\end{split}
\end{equation}
Thanks to (\ref{eq10}), (\ref{eq15}) and Chebyshev's inequality, to prove (\ref{eq9}) it is enough to verify that
\begin{equation}\label{eq16}
\mu\bigg(\bigg\{x\in\R^d\setminus\wih\Omega\,:\,(\VV^\SSS_\rho\circ\TT)\bigg(\sum_j\chi_{\R^d\setminus2R_j}(x)\nu_b^j\bigg)(x)>{\lambda}/{4}\bigg\}\bigg)\lesssim\frac{\|\nu\|}{\lambda}.
\end{equation}

Our task now is to prove (\ref{eq16}). Given $x\in\supp\mu$, let $\{\epsilon_m\}_{m\in\Z}$ be a non-increasing sequence of positive numbers (which depends on $x$, i.e. $\epsilon_m\equiv\epsilon_m(x)$) such that
\begin{equation}\label{eq17}
(\VV^\SSS_\rho\circ\TT)\bigg(\!\sum_j\chi_{\R^d\setminus2R_j}(x)\nu_b^j\bigg)(x)
\leq2\bigg(\sum_{k\in\Z}\sum_{\epsilon_m,\epsilon_{m+1}\in I_k}
\!\bigg|\!\sum_j\!
\chi_{\R^d\setminus2R_j}(x)T_{\epsilon_m,\epsilon_{m+1}}\nu_b^j(x)\bigg|^\rho\bigg)^{1/\rho}\!.
\end{equation}
Typically, the problem of the existence of such a sequence can be avoided by defining an auxiliary operator $\VV^\SSS_{\rho,I}\circ\TT$ along the same lines of $\VV^\SSS_\rho\circ\TT$ and requiring the supremum to be taken over a finite set of indices $I$ (thus the supremum is a maximum in this case). One then proves the desired estimate for $\VV^\SSS_{\rho,I}\circ\TT$ with bounds independent of $I$ and deduces the general result by taking the supremum over all finite sets $I$ and using monotone convergence. For the sake of shortness, we omit the details.

Define the {\em interior} and {\em boundary sum}, respectively, by
\begin{equation*}
\begin{split}
S_i(x)&:=\bigg(\sum_{k\in\Z}\,\sum_{\epsilon_m,\epsilon_{m+1}\in I_k}\bigg|\sum_{j:\,R_j\subset A(x,\epsilon_{m+1},\epsilon_m)}
\chi_{\R^d\setminus2R_j}(x)
T_{\epsilon_m,\epsilon_{m+1}}\nu_b^j(x)\bigg|^\rho\bigg)^{1/\rho},\\
S_b(x)&:=\bigg(\sum_{k\in\Z}\,\sum_{\epsilon_m,\epsilon_{m+1}\in I_k}\bigg|\sum_{j:\,R_j\cap\partial A(x,\epsilon_{m+1},\epsilon_m)\neq\varnothing}\chi_{\R^d\setminus2R_j}(x)
T_{\epsilon_m,\epsilon_{m+1}}\nu_b^j(x)\bigg|^\rho\bigg)^{1/\rho}.
\end{split}
\end{equation*}
If $R_j\cap A(x,\epsilon_{m+1},\epsilon_m)=\varnothing$ then $T_{\epsilon_m,\epsilon_{m+1}}\nu_b^j(x)=0$, thus
$$(\VV^\SSS_\rho\circ\TT)\bigg(\!\sum_j\chi_{\R^d\setminus2R_j}(x)\nu_b^j\bigg)(x)\leq2(S_i+S_b)$$ by (\ref{eq17}) and the triangle inequality, and so
\begin{equation}\label{eq18}
\begin{split}
\mu\bigg(\bigg\{x\in&\R^d\setminus\wih\Omega\,:\,(\VV^\SSS_\rho\circ\TT)\bigg(\sum_j\chi_{\R^d\setminus2R_j}(x)\nu_b^j\bigg)(x)>{\lambda}/{4}\bigg\}\bigg)\\
&\leq\mu\big(\big\{x\in\R^d\setminus\wih\Omega\,:\,S_i(x)>\lambda/16\big\}\big)
+\mu\big(\big\{x\in\R^d\setminus\wih\Omega\,:\,S_b(x)>\lambda/16\big\}\big).
\end{split}
\end{equation}

To estimate $\mu\big(\big\{x\in\R^d\setminus\wih\Omega\,:\,S_i(x)>\lambda/16\big\}\big)$ we use the fact that the $\ell^\rho$-norm is not bigger than the $\ell^1$-norm for $\rho\geq1$, and that $\supp(\nu_b^j)\subset R_j$:
\begin{equation}\label{eq19}
\begin{split}
S_i(x)&\leq\sum_{m\in\Z}\bigg|\sum_{j:\,R_j\subset A(x,\epsilon_{m+1},\epsilon_m)}\chi_{\R^d\setminus2R_j}(x)
T_{\epsilon_m,\epsilon_{m+1}}\nu_b^j(x)\bigg|\\
&\leq\sum_{j}\chi_{\R^d\setminus2R_j}(x)\sum_{m\in\Z:\,A(x,\epsilon_{m+1},\epsilon_m)\supset R_j}
\!\!|T_{\epsilon_m,\epsilon_{m+1}}\nu_b^j(x)|\leq\sum_{j}\chi_{\R^d\setminus 2R_j}(x)|T\nu_b^j(x)|,
\end{split}
\end{equation}
Recall that $\nu_b^j(R_j)=0$ and $\|\nu_b^j\|\lesssim\nu(Q_j)$ by (\ref{lema CZ 5}). Thus, if $z_j$ denotes the center of $R_j$, we have
\begin{equation}\label{eq20}
\begin{split}
\int_{\R^d\setminus 2R_j}|T\nu_b^j|\,d\mu
&\leq\int_{\R^d\setminus 2R_j}\int_{R_j}|K(x-y)-K(x-z_j)|\,d|\nu_b^j|(y)\,d\mu(x)\\
&\lesssim\int_{\R^d\setminus 2R_j}\int_{R_j}\frac{|y-z_j|}{|x-z_j|^{n+1}}\,d|\nu_b^j|(y)\,d\mu(x)\\
&\lesssim\|\nu_b^j\|\int_{\R^d\setminus 2R_j}\frac{\ell(R_j)}{|x-z_j|^{n+1}}\,d\mu(x)\lesssim\|\nu_b^j\|
\lesssim\nu(Q_j).
\end{split}
\end{equation}
Finally, from Chebyshev's inequality, (\ref{eq19}) and (\ref{eq20}) we conclude that
\begin{equation}\label{eq21}
\begin{split}
\mu\big(\big\{x\in\R^d\setminus\wih\Omega\,:\,S_i(x)>\lambda/16\big\}\big)
&\leq\frac{16}{\lambda}
\sum_j\int_{\R^d\setminus2R_j}|T\nu_b^j|\,d\mu
\lesssim\frac{1}{\lambda}\sum_j\nu(Q_j)\lesssim\frac{\|\nu\|}{\lambda}.
\end{split}
\end{equation}

By (\ref{eq18}), (\ref{eq21}) and Chebyshev's inequality once again we see that, in order to prove (\ref{eq16}), it is enough to show that
\begin{equation}\label{eq22}
\begin{split}
\int_{\R^d\setminus\wih\Omega}S_b^2\,d\mu
&\lesssim\lambda\|\nu\|.
\end{split}
\end{equation}
The proof of this estimate is much more involved than the previous ones and requires the use of the corona decomposition of
$\mu$, that is, we need to introduce the splitting $\DD^\mu=\BB\cup(\biguplus_{S\in\Trees}S)$. We denote
$$T_{j,m}(x):=\chi_{\R^d\setminus2R_j}(x)
T_{\epsilon_m,\epsilon_{m+1}}\nu_b^j(x).$$
Recall that for $P\in\DD_k$ we write $I_P=[2^{-k-1},2^{-k})$.
Since $\rho>2$, the $\ell^\rho$-norm is not bigger than the $\ell^2$-norm, and we get
\begin{equation}\label{eq23}
\begin{split}
\int_{\R^d\setminus\wih\Omega}S_b^2\,d\mu
&\leq\sum_{P\in\BB}\int_{P\setminus\wih\Omega}\sum_{\epsilon_m,\epsilon_{m+1}\in I_P}
\bigg|\sum_{j:\,R_j\cap\partial A(x,\epsilon_{m+1},\epsilon_m)\neq\varnothing}T_{j,m}(x)\bigg|^2\,d\mu(x)\\
&\quad+\sum_{S\in\Trees}\sum_{P\in S}\int_{P\setminus\wih\Omega}\sum_{\epsilon_m,\epsilon_{m+1}\in I_P}
\bigg|\sum_{j:\,R_j\cap\partial A(x,\epsilon_{m+1},\epsilon_m)\neq\varnothing}T_{j,m}(x)\bigg|^2\,d\mu(x).
\end{split}
\end{equation}
Observe that
\begin{equation}\label{eq26}
|T_{j,m}(x)|\lesssim\ell(P)^{-n}\chi_{\R^d\setminus2R_j}(x)|\nu_b^j|(A(x,\epsilon_{m+1},\epsilon_m))
\end{equation}
for all $\epsilon_m,\epsilon_{m+1}\in I_P$. If in addition $x\in P\setminus2R_j$ and $R_j\cap \partial A(x,\epsilon_{m+1},\epsilon_m)\neq\varnothing$, taking into account that $\epsilon_m\approx \epsilon_{m+1}\approx \ell(P)\gtrsim \dist(x,R_j)\gtrsim \ell(R_j)$,
we deduce that
\begin{equation}\label{eq42**}
R_j\subset B_P,
\end{equation}
assuming the constant $c_1$ in \eqref{defbq} big enough.

Concerning the first term on the right hand side of (\ref{eq23}), from (\ref{eq26}) and using that $\|\nu_b^j\|\lesssim\nu(Q_j)$, that the $Q_j$'s have bounded overlap and that $Q_j\subset B_P$ for all $j$ such that $R_j\subset B_P$, we get
\begin{equation}\label{eq24}
\begin{split}
\sum_{P\in\BB}\int_{P\setminus\wih\Omega}&\sum_{\epsilon_m,\epsilon_{m+1}\in I_P}
\bigg|\sum_{j:\,R_j\cap\partial A(x,\epsilon_{m+1},\epsilon_m)\neq\varnothing}T_{j,m}(x)\bigg|^2\,d\mu(x)\\
&\lesssim\sum_{P\in\BB}\int_{P}
\bigg(\sum_{\epsilon_m,\epsilon_{m+1}\in I_P}\sum_{j:\,R_j\subset B_P}
\ell(P)^{-n}|\nu_b^j|(A(x,\epsilon_{m+1},\epsilon_m))\bigg)^2\,d\mu(x)\\
&\lesssim\sum_{P\in\BB}\int_{P}
\bigg(\sum_{j:\,R_j\subset B_P}
\frac{\|\nu_b^j\|}{\ell(P)^{n}}\bigg)^2\,d\mu(x)
\lesssim\sum_{P\in\BB:\exists R_j\subset B_P}
\bigg(\frac{\nu(B_P)}{\ell(P)^{n}}\bigg)^2\ell(P)^n
\lesssim\lambda\|\nu\|,
\end{split}
\end{equation}
where we also used Lemma \ref{lemcarleson} in the last inequality, because $\BB$ is a Carleson family.

From now on, all our efforts are devoted to estimate the second term on the right hand side of (\ref{eq23}).

\begin{claim}\label{claim1}
Assume $c_1$ in \eqref{defbq} is big enough, and let also $\alpha>0$ be big enough depending on $n$, $d$, and on the AD regualrity constants of $\mu$. Given $Q\in\Top_\GG$, $P\in\Tree(Q)$ and $R_j\subset B_P$, at least one of the following holds:
\begin{itemize}
\item[$(i)$] There exists $R\in\Tree(Q)$ such that $R\subset \alpha B_P$, $R_j\subset B_R$ and $\ell(R_j)\in I_R$.
\item[$(ii)$] There exists $R\in\partial\Tree(Q)$ such that $R\subset \alpha B_P$ and $R_j\subset B_R$.
\end{itemize}
\end{claim}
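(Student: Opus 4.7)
The plan is to select a natural candidate $R\in\DD^\mu$ at the scale of $R_j$ and, when needed, to replace it by a dyadic ancestor so as to correctly track membership in the tree $\Tree(Q)$. First, I would pick $x\in R_j\cap\supp\mu$ (non-empty because $R_j=6Q_j$ and, by \eqref{eqad43}, $R_j\cap\supp\mu\neq\varnothing$), let $k\in\Z$ be such that $\ell(R_j)\in I_k$, and let $R_0\in\DD^\mu_k$ be the dyadic cube containing $x$, so that $\ell(R_j)\in I_{R_0}$ automatically. A short triangle-inequality computation using $x\in R_0\cap R_j$, $\ell(R_0)\leq 2\ell(R_j)$ and the hypothesis $R_j\subset B_P$ should yield $R_j\subset B_{R_0}$ and $R_0\subset B_P$, once $c_1$ in $B_Q=B(z_Q,c_1\ell(Q))$ is fixed large enough.

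Next, I would split into two cases. If $R_0\in\Tree(Q)$, then $R:=R_0$ immediately satisfies alternative $(i)$. Otherwise, I consider the chain of dyadic ancestors of $R_0$ in $\DD^\mu$ and let $R_1$ be the smallest ancestor belonging to $\Tree(Q)$. To see that $R_1$ exists, note that because $x\in B_P$ and $P\in\Tree(Q)$, some ancestor of $R_0$ at a scale comparable to $\ell(P)$ is large enough to contain $P$; since any $\DD^\mu$ cube $R'$ with $P\subset R'\subset Q$ lies in $\Tree(Q)$ by the coherence property $(d)$ of Section \ref{5ss corona decomposition}, such an ancestor belongs to $\Tree(Q)$. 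By minimality of $R_1$ and the assumption $R_0\notin\Tree(Q)$, the child of $R_1$ on the chain toward $R_0$ lies in $\DD^\mu\setminus\Tree(Q)$; this child, of side length $\ell(R_1)/2$ and at zero distance from $R_1$, witnesses $R_1\in\partial\Tree(Q)$.

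It would then remain to verify $R_j\subset B_{R_1}$ and $R_1\subset B_P$, both of which should follow by the triangle inequality from $x\in R_j\cap R_1$ and $\ell(R_j)\leq\ell(R_0)\leq\ell(R_1)\lesssim\ell(P)$; this realises alternative $(ii)$ with $R:=R_1$. The main technical subtlety I anticipate is the simultaneous calibration of the constant $c_1$: it must be taken large enough to absorb the triangle-inequality overhead in the inclusions $R_0,R_1\subset B_P$ and $R_j\subset B_{R_0},B_{R_1}$, and to guarantee that the ancestor chain of $R_0$ meets $\Tree(Q)$ already at scale $\lesssim\ell(P)$. Since $c_1$ is a free parameter in the setup, I expect these requirements to be mutually compatible.
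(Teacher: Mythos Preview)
Your approach differs from the paper's. Instead of climbing the dyadic ancestor chain of a fixed point $x\in R_j\cap\supp\mu$, the paper takes $R\in\Tree(Q)$ of \emph{minimal side length} subject to $R_j\subset B_R$ and $\ell(R)\geq\ell(R_j)$. Since $P$ itself is such a cube (after the harmless reduction to $\ell(P)\geq\ell(R_j)$), this minimum exists and $\ell(R)\leq\ell(P)$; when $\ell(R_j)\notin I_R$, the paper locates a nearby $R''\in\DD^\mu\setminus\Tree(Q)$ with $\ell(R'')=\ell(R)/2$ that witnesses $R\in\partial\Tree(Q)$.

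Your argument has a genuine gap in the existence of $R_1$. You assert that ``some ancestor of $R_0$ at a scale comparable to $\ell(P)$ is large enough to contain $P$'', but this is not true for the David cubes: the ancestors of $R_0$ are exactly the $\DD^\mu$-cubes containing $x$, and since $x\in B_P$ need not lie in $P$ (or even in $Q$), the ancestor of $R_0$ at generation $\ell(P)$ may well be a different cube of that generation, disjoint from $P$. Worse, if $x\notin Q$ --- which can occur when $P$ sits near the boundary of $Q$ and $R_j\subset B_P\setminus Q$ --- then \emph{no} ancestor of $R_0$ belongs to $\Tree(Q)$ at all, because every element of $\Tree(Q)$ is a subset of $Q$. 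In that situation $R_1$ simply does not exist. Enlarging $c_1$ does not help here: whether the dyadic ancestors of $x$ lie in $\Tree(Q)$ depends on the tree structure, not on $c_1$. The paper's minimisation sidesteps this entirely by starting from $P$, which is always in the candidate set. Your route can be salvaged: if the ancestor $R'$ of $R_0$ with $\ell(R')=\ell(P)$ is not in $\Tree(Q)$, then $R'$ (at distance $\lesssim c_1\ell(P)$ from $P$) witnesses $P\in\partial\Tree(Q)$ and $R=P$ realises alternative $(ii)$; otherwise run your argument to find $R_1\subset R'$. But this case distinction is absent from your proposal.
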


We postpone the proof of the preceding statement till the end of the proof of the theorem.
Thanks to this claim, given $Q\in\Top_\GG$ and $P\in\Tree(Q)$ we can split
$$\{j:\,R_j\subset B_P\}\subset J_1\cup J_2,$$
where
\begin{equation*}
\begin{split}
J_1:&=\{j:\,R_j\subset B_P,\, \exists\, R\in\Tree(Q) \text{ such that } R\subset \alpha B_P,\, R_j\subset B_R,\,\ell(R_j)\in I_R\},\\
J_2:&=\{j:\,R_j\subset B_P,\, \exists\, R\in\partial\Tree(Q) \text{ such that } R\subset \alpha B_P,\, R_j\subset B_R\}.
\end{split}
\end{equation*}

Recall that if $x\in P\setminus2R_j$, $\epsilon_{m},\epsilon_{m+1}\in I_P$ and $R_j\cap\partial A(x,\epsilon_{m+1},\epsilon_m)\neq\varnothing$ then $R_j\subset B_P$ (see \eqref{eq42**}). Thus, we can decompose the second term on the right hand side of (\ref{eq23}) using $J_1$ and $J_2$ as follows
\begin{equation}\label{eq28}
\begin{split}
\sum_{S\in\Trees}&\sum_{P\in S}
\int_{P\setminus\wih\Omega}\sum_{\epsilon_m,\epsilon_{m+1}\in I_P}
\bigg|\sum_{j:\,R_j\cap\partial A(x,\epsilon_{m+1},\epsilon_m)\neq\varnothing}T_{j,m}(x)\bigg|^2\,d\mu(x)\\
&\lesssim\sum_{Q\in\Top_\GG}\sum_{P\in \Tree(Q)}\int_{P\setminus\wih\Omega}\sum_{\epsilon_m,\epsilon_{m+1}\in I_P}
\bigg|\sum_{j\in J_1:\,R_j\cap\partial A(x,\epsilon_{m+1},\epsilon_m)\neq\varnothing}T_{j,m}(x)\bigg|^2\,d\mu(x)\\
&\quad+\sum_{Q\in\Top_\GG}\sum_{P\in \Tree(Q)}\int_{P\setminus\wih\Omega}\sum_{\epsilon_m,\epsilon_{m+1}\in I_P}
\bigg|\sum_{j\in J_2:\,R_j\cap\partial A(x,\epsilon_{m+1},\epsilon_m)\neq\varnothing}T_{j,m}(x)\bigg|^2\,d\mu(x).
\end{split}
\end{equation}
Despite that the arguments to estimate both terms on the right hand side of (\ref{eq28}) are similar, we will deal with them separately, due to its different nature with respect to the structure of the corona decomposition.

\begin{claim}\label{claim2}
Let $Q$, $P$, $x$, $\epsilon_m$ and $\epsilon_{m+1}$ be as on the right hand side of
 $\eqref{eq28}$. We have
\begin{equation}\label{eq25}
\begin{split}
\bigg|&\sum_{\begin{subarray}{c}j\in J_1:\, R_j\cap\partial A(x,\epsilon_{m+1},\epsilon_m)\neq\varnothing\end{subarray}}\!\!
|\nu_b^j|(A(x,\epsilon_{m+1},\epsilon_m))\bigg|^2\\
&\quad\qquad\lesssim\lambda\ell(P)^n\sum_{k:\,2^{-k}\lesssim\ell(P)}
\bigg(\frac{2^{-k}}{\ell(P)}\bigg)^{1/2}
\sum_{\begin{subarray}{c}j\in J_1:\,\ell(R_j)\in I_k\end{subarray}}
|\nu_b^j|(A(x,\epsilon_{m+1},\epsilon_m)).
\end{split}
\end{equation}
Given $j\in J_2$, denote by $R(j)\in\partial\Tree(Q)$ some cube such that $R(j)\subset \alpha B_P$ and $R_j\subset B_{R(j)}$, where $\alpha>0$ is as in {\em Claim \ref{claim1}}. We have\begin{equation}\label{eq30}
\begin{split}
&\bigg|\sum_{\begin{subarray}{c}j\in J_2:\, R_j\cap\partial A(x,\epsilon_{m+1},\epsilon_m)\neq\varnothing\end{subarray}}\!\!
|\nu_b^j|(A(x,\epsilon_{m+1},\epsilon_m))\bigg|^2\\
&\qquad\lesssim\lambda^{1/2}\ell(P)^{n/2}\nu(B_P)^{1/2}
\sum_{\begin{subarray}{c}R\in\partial\Tree(Q):\\R\subset \alpha B_P\end{subarray}}
\sum_{\begin{subarray}{c}j\in J_2:\\ R(j)=R\end{subarray}}\!\!\bigg(\frac{\ell(R)}{\ell(P)}\bigg)^{1/4}\!\!
|\nu_b^j|(B_R\cap A(x,\epsilon_{m+1},\epsilon_m)).
\end{split}
\end{equation}
\end{claim}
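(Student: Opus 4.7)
The plan is to deduce both inequalities by applying Cauchy--Schwarz to convert the squared sum on the left-hand side into a linear expression matching the right-hand side, up to a geometric prefactor. The residual factor is controlled by the Annulus Lemma (Lemma~\ref{annulus}) in case $J_1$, where Claim~\ref{claim1}(i) yields $\ell(R_j)\approx\ell(R)$, and by the Carleson packing of $\partial\Trees$ together with Lemma~\ref{lemcarleson} in case $J_2$, where no such scale relation is available; the latter case is the main obstacle and explains the weaker prefactor $\lambda^{1/2}\ell(P)^{n/2}\nu(B_P)^{1/2}$ in \eqref{eq30} compared to the $\lambda\ell(P)^n$ appearing in \eqref{eq25}.

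For \eqref{eq25}, partition $j\in J_1$ by the scale $k$ with $\ell(R_j)\in I_k$ and set $\beta_k:=\sum_{j\in J_1,\,\ell(R_j)\in I_k,\,R_j\cap\partial A\neq\varnothing}|\nu_b^j|(A)$. Cauchy--Schwarz with weight $s_k:=(2^{-k}/\ell(P))^{1/2}$ yields $(\sum_k\beta_k)^2\leq(\sum_k s_k\beta_k)(\sum_k\beta_k/s_k)$, and the first factor is precisely the right-hand side of \eqref{eq25}. For each $j$ contributing to $\beta_k$, the cube $R\in\Tree(Q)$ supplied by Claim~\ref{claim1}(i) satisfies $R\subset B_P$ and $\ell(R)\in I_R\approx I_k$; since $R_j\subset B_R$ with $\diam B_R\approx 2^{-k}$ and $R_j\cap\partial A\neq\varnothing$, the cube $R$ lies in a tube of width $\lesssim 2^{-k}$ around $\partial B(x,\epsilon_m)\cup\partial B(x,\epsilon_{m+1})$. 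Lemma~\ref{annulus} (applied at $\epsilon_m$ and $\epsilon_{m+1}$ with $\tilde R\in V(P)$ chosen to cover the relevant portion of $B_P$) bounds the total $\mu$-mass of the admissible $R$'s by $\lesssim 2^{-k}\ell(P)^{n-1}$, hence their number by $\lesssim 2^{k(n-1)}\ell(P)^{n-1}$. Combined with the fact that at most $O(1)$ indices $j$ contribute per $R$ (by almost-disjointness of $\{Q_j\}$ and $\ell(R_j)\in I_R$) and with $|\nu_b^j|(A)\lesssim\nu(Q_j)\lesssim\lambda\,2^{-kn}$ from Lemma~\ref{lema CZ}, this yields $\beta_k\lesssim\lambda\,2^{-k}\ell(P)^{n-1}=\lambda\ell(P)^n s_k^2$; geometric summation over $k$ with $2^{-k}\lesssim\ell(P)$ then gives $\sum_k\beta_k/s_k\lesssim\lambda\ell(P)^n$.

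For \eqref{eq30}, partition $j\in J_2$ by $R=R(j)\in\partial\Tree(Q)$ and set $c_R:=\sum_{j:\,R(j)=R,\,R_j\cap\partial A\neq\varnothing}|\nu_b^j|(B_R\cap A)$. The balanced Cauchy--Schwarz
\[\Big(\sum_R c_R\Big)^2\leq\Big(\sum_R(\ell(R)/\ell(P))^{1/4}c_R\Big)\Big(\sum_R(\ell(P)/\ell(R))^{1/4}c_R\Big)\]
produces the target sum of \eqref{eq30} as its first factor, reducing matters to showing $\sum_R(\ell(P)/\ell(R))^{1/4}c_R\lesssim\lambda^{1/2}\ell(P)^{n/2}\nu(B_P)^{1/2}$. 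A further Cauchy--Schwarz of the form $\sum_R x_R c_R\leq(\sum_R c_R)^{1/2}(\sum_R x_R^2 c_R)^{1/2}$ with $x_R=(\ell(P)/\ell(R))^{1/4}$ extracts a factor of $(\sum_R c_R)^{1/2}\lesssim\nu(B_P)^{1/2}$ (using $\sum_j|\nu_b^j|\leq\nu+C_0\lambda\mu$ from \eqref{lema CZ 6} and the finite overlap of $\{R_j\}$), leaving $(\sum_R(\ell(P)/\ell(R))^{1/2}c_R)^{1/2}$ to be bounded by $(\lambda\ell(P)^n)^{1/2}$. This latter bound is obtained from the Carleson packing of $\{R\in\partial\Tree(Q):R\subset B_P\}$ (inherited from $\partial\Trees$) together with the $p=2$ instance of Lemma~\ref{lemcarleson} applied to a suitable localisation of $\nu$ to a slight enlargement of $B_P$, combined with a further Cauchy--Schwarz against the weight $\mu(R)^{1/2}$. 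Tracking the exact balance of the $1/4$-exponent through these successive Cauchy--Schwarz applications, so that the Carleson packing and the $\nu$-mass bound can both be cleanly invoked, is the most delicate point of the argument.
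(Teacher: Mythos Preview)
Your argument for \eqref{eq25} is correct and essentially the same as the paper's: a weighted Cauchy--Schwarz over the scales $k$ followed by the annulus estimate $\beta_k\lesssim\lambda\,2^{-k}\ell(P)^{n-1}$ coming from Lemma~\ref{annulus}.

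For \eqref{eq30}, however, there is a genuine gap. After your two Cauchy--Schwarz steps the problem is reduced to
\[
\sum_{R}\Big(\frac{\ell(P)}{\ell(R)}\Big)^{1/2}c_R\;\lesssim\;\lambda\,\ell(P)^n,
\]
and you propose to obtain this from the Carleson packing of $\partial\Trees$ together with Lemma~\ref{lemcarleson}. This does not work: Lemma~\ref{lemcarleson} controls sums of the shape $\sum_R\big(\nu(B_R)/\ell(R)^n\big)^p\ell(R)^n$, but the extra weight $(\ell(P)/\ell(R))^{1/2}\ge 1$ cannot be absorbed by any Carleson packing condition (the coefficients $(\ell(P)/\ell(R))^{1/2}\mu(R)\approx\ell(P)^{1/2}\ell(R)^{n-1/2}$ do \emph{not} satisfy a packing condition inside $B_P$). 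No balancing of Cauchy--Schwarz against $\mu(R)^{1/2}$ fixes this, because the exponent on $\ell(R)$ always ends up below $n$ somewhere.

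What you are missing is that $c_R>0$ forces $B_R\cap\partial A(x,\epsilon_{m+1},\epsilon_m)\neq\varnothing$ (since $R_j\subset B_R$ and $R_j\cap\partial A\neq\varnothing$), so Lemma~\ref{annulus} applies to the cubes $R$ themselves, exactly as in your treatment of $J_1$. The paper exploits this directly: it writes the square on the left of \eqref{eq30} as $|\cdot|^{3/2}\cdot|\cdot|^{1/2}$, bounds the $1/2$-power trivially by $\nu(B_P)^{1/2}$, and handles the $3/2$-power by the same H\"older-plus-annulus argument used for \eqref{eq25} (now grouping by $\ell(R)=2^{-k}$ rather than $\ell(R_j)$, and using $\nu(B_R)\lesssim\lambda\mu(B_R)$ together with Lemma~\ref{annulus} to get $\sum_{R:\ell(R)=2^{-k},\,B_R\cap\partial A\neq\varnothing}\nu(B_R)\lesssim\lambda\,2^{-k}\ell(P)^{n-1}$). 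In fact, if you feed this annulus bound into your own reduction you obtain $\sum_R(\ell(P)/\ell(R))^{1/2}c_R\lesssim\lambda\ell(P)^{1/2}\sum_k 2^{-k/2}\ell(P)^{n-1}\lesssim\lambda\ell(P)^n$, and your scheme goes through. The key tool for \eqref{eq30} is Lemma~\ref{annulus}, not Lemma~\ref{lemcarleson}.
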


Again we postpone the proof of the preceding claim till the end of the proof of the theorem.

For the case $j\in J_1$  in (\ref{eq28}), using (\ref{eq26}), (\ref{eq25}) and \eqref{eq42**} we get
\begin{equation}\label{eq27}
\begin{split}
\sum_{Q\in\Top_\GG}&\sum_{P\in \Tree(Q)}
\int_{P\setminus\wih\Omega}\sum_{\epsilon_m,\epsilon_{m+1}\in I_P}
\bigg|\sum_{j\in J_1:\,R_j\cap\partial A(x,\epsilon_{m+1},\epsilon_m)\neq\varnothing}T_{j,m}(x)\bigg|^2\,d\mu(x)\\
&\!\!\!\lesssim\lambda\sum_{Q\in\Top_\GG}\sum_{P\in \Tree(Q)}\ell(P)^{-n}\\
&\!\!\!\quad\quad\times\int_{P\setminus\wih\Omega}\sum_{\epsilon_m,\epsilon_{m+1}\in I_P}\sum_{k:\,2^{-k}\lesssim\ell(P)}
\bigg(\frac{2^{-k}}{\ell(P)}\bigg)^{1/2}\!\!\!\!\!
\sum_{\begin{subarray}{c}j\in J_1:\,\ell(R_j)\in I_k\end{subarray}}\!\!\!
|\nu_b^j|(A(x,\epsilon_{m+1},\epsilon_m))\,d\mu(x)\\
&\!\!\!\lesssim\lambda\sum_{Q\in\Top_\GG}\sum_{P\in\Tree(Q)}
\sum_{k:\,2^{-k}\lesssim\ell(P)}
\bigg(\frac{2^{-k}}{\ell(P)}\bigg)^{1/2}
\sum_{\begin{subarray}{c}j\in J_1:\,\ell(R_j)\in I_k\end{subarray}}
\|\nu_b^j\|\\
&\!\!\!\lesssim\lambda
\sum_{j}\nu(Q_j)
\sum_{k:\,\ell(R_j)\in I_k}
\sum_{\begin{subarray}{c}P\in\DD^\mu:\,R_j\subset B_P\\
2^{-k}\lesssim\ell(P)\end{subarray}}
\bigg(\frac{2^{-k}}{\ell(P)}\bigg)^{1/2}
\lesssim\lambda
\sum_{j}\nu(Q_j)
\lesssim\lambda\|\nu\|.
\end{split}
\end{equation}
In the third inequality we used that $j\in J_1$ implies that $R_j\subset B_P$.

Concerning the case $j\in J_2$ in (\ref{eq28}), by (\ref{eq26}) and (\ref{eq30}) we see that
\begin{equation*}
\begin{split}
\sum_{Q\in\Top_\GG}&\sum_{P\in \Tree(Q)}\int_{P\setminus\wih\Omega}\sum_{\epsilon_m,\epsilon_{m+1}\in I_P}
\bigg|\sum_{j\in J_2:\,R_j\cap\partial A(x,\epsilon_{m+1},\epsilon_m)\neq\varnothing}T_{j,m}(x)\bigg|^2\,d\mu(x)\\
&\!\!\!\lesssim\lambda^{1/2}\sum_{Q\in\Top_\GG}\sum_{P\in \Tree(Q)}
\ell(P)^{-n}\bigg(\frac{\nu(B_P)}{\ell(P)^n}\bigg)^{1/2}\\
&\!\!\!\quad\quad\times\int_{P\setminus\wih\Omega}\sum_{\epsilon_m,\epsilon_{m+1}\in I_P}\sum_{\begin{subarray}{c}R\in\partial\Tree(Q):\\R\subset \alpha B_P\end{subarray}}
\sum_{\begin{subarray}{c}j\in J_2:\\ R(j)=R\end{subarray}}\!\!\bigg(\frac{\ell(R)}{\ell(P)}\bigg)^{1/4}\!\!
|\nu_b^j|(B_R\cap A(x,\epsilon_{m+1},\epsilon_m))\,d\mu(x)\\
&\!\!\!\lesssim\lambda^{1/2}\sum_{Q\in\Top_\GG}\sum_{P\in \Tree(Q)}
\bigg(\frac{\nu(B_P)}{\ell(P)^n}\bigg)^{1/2}
\sum_{\begin{subarray}{c}R\in\partial\Tree(Q):\\R\subset \alpha B_P\end{subarray}}
\sum_{\begin{subarray}{c}j\in J_2:\\ R(j)=R\end{subarray}}\!\!\bigg(\frac{\ell(R)}{\ell(P)}\bigg)^{1/4}\!\!
\|\nu_b^j\|\\
&\!\!\!\lesssim\lambda^{1/2}\sum_{Q\in\Top_\GG}\sum_{P\in \Tree(Q)}
\sum_{\substack{R\in\partial\Tree(Q):\,R\subset \alpha B_P\\ \exists R_j\subset B_R}}
\bigg(\frac{\ell(R)}{\ell(P)}\bigg)^{1/4}
\bigg(\frac{\nu(B_P)}{\ell(P)^n}\bigg)^{1/2}
\bigg(\frac{\nu(B_R)}{\ell(R)^n}\bigg)\ell(R)^n,
\end{split}
\end{equation*}
where we also used in the last inequality above that $\|\nu_b^j\|\lesssim\nu(Q_j)$ and that the $Q_j$'s have bounded overlap.
Since $a^{1/2}b\lesssim a^{3/2}+b^{3/2}$ for all $a,b\geq0$, we obtain
\begin{equation*}
\begin{split}
&\sum_{Q\in\Top_\GG}\sum_{P\in \Tree(Q)}\int_{P\setminus\wih\Omega}\sum_{\epsilon_m,\epsilon_{m+1}\in I_P}
\bigg|\sum_{j\in J_2:\,R_j\cap\partial A(x,\epsilon_{m+1},\epsilon_m)\neq\varnothing}T_{j,m}(x)\bigg|^2\,d\mu(x)\\
&\qquad\lesssim\lambda^{1/2}\sum_{Q\in\Top_\GG}\sum_{P\in \Tree(Q)}
\sum_{\substack{R\in\partial\Tree(Q):\,R\subset \alpha B_P\\\exists R_j\subset B_R}}\!\!\!
\bigg(\bigg(\frac{\nu(B_P)}{\ell(P)^n}\bigg)^{3/2}\!\!+
\bigg(\frac{\nu(B_R)}{\ell(R)^n}\bigg)^{3/2}\bigg)
\bigg(\frac{\ell(R)}{\ell(P)}\bigg)^{1/4}\ell(R)^n\\
&\qquad\lesssim\lambda^{1/2}\sum_{Q\in\Top_\GG}\sum_{\substack{P\in \Tree(Q)\\\exists R_j\subset cB_P}}
\bigg(\frac{\nu(B_P)}{\ell(P)^n}\bigg)^{3/2}a_P
+\lambda^{1/2}\sum_{Q\in\Top_\GG}
\sum_{\substack{R\in\partial\Tree(Q)\\\exists R_j\subset B_R}}
\bigg(\frac{\nu(B_R)}{\ell(R)^n}\bigg)^{3/2}\ell(R)^n,
\end{split}
\end{equation*}
where we have set $a_P:=\sum_{R\in\partial\Tree(Q):\,R\subset \alpha B_P}
(\ell(R)/\ell(P))^{1/4}\ell(R)^n$ whenever $P\in\Tree(Q)$ for some $Q\in\Top_\GG$ (otherwise, we set $a_P=0$). Since $\partial\Trees$ is a Carleson family, we see that the $a_P$'s satisfy a Carleson packing condition because, for a given $T\in\DD^\mu$,
\begin{equation*}
\begin{split}
\sum_{P\subset T}a_P
&\leq\sum_{P\subset T}\sum_{Q\in\Top_\GG:\,P\in\Tree(Q)}
\sum_{R\in\partial\Tree(Q):\,R\subset \alpha B_P}
\bigg(\frac{\ell(R)}{\ell(P)}\bigg)^{1/4}\ell(R)^n\\
&\leq\sum_{P\subset T}
\sum_{R\in\partial\Trees:\,R\subset \alpha B_P\subset \alpha B_T}
\bigg(\frac{\ell(R)}{\ell(P)}\bigg)^{1/4}\ell(R)^n\\
&\leq\sum_{R\in\partial\Trees:\,R\subset \alpha B_T}
\ell(R)^n\sum_{P\subset T:\,R\subset \alpha B_P}
\bigg(\frac{\ell(R)}{\ell(P)}\bigg)^{1/4}
\lesssim\sum_{R\in\partial\Trees:\,R\subset \alpha B_T}
\ell(R)^n\lesssim\ell(T)^n.
\end{split}
\end{equation*}
Therefore,
\begin{equation}\label{eq29}
\begin{split}
\sum_{Q\in\Top_\GG}&\sum_{P\in \Tree(Q)}\int_{P\setminus\wih\Omega}\sum_{\epsilon_m,\epsilon_{m+1}\in I_P}
\bigg|\sum_{j\in J_2:\,R_j\cap\partial A(x,\epsilon_{m+1},\epsilon_m)\neq\varnothing}T_{j,m}(x)\bigg|^2\,d\mu(x)\\
&\lesssim\lambda^{1/2}\sum_{P\in\DD^\mu}
\bigg(\frac{\nu(B_P)}{\ell(P)^n}\bigg)^{3/2}
\big(a_P+\ell(P)^n\chi_{\partial\Trees}(P)\big)
\lesssim\lambda\|\nu\|,
\end{split}
\end{equation}
because the coefficients $a_P+\ell(P)^n\chi_{\partial\Trees}(P)$ satisfy a Carleson packing condition and thus we can use Lemma \ref{lemcarleson}.

Finally, (\ref{eq22}) follows from (\ref{eq23}), (\ref{eq24}), (\ref{eq28}), (\ref{eq27}) and (\ref{eq29}), so Theorem \ref{weakL1 thm sl}$(i)$ is proved except for the claims.
\end{proof}

\begin{proof}[Proof of {\em Claim \ref{claim1}}]
Let $Q\in\Top_\GG$, $P\in\Tree(Q)$ and $R_j\subset B_P$. For the purpose of the claim, we can assume that $\ell(Q)\geq\ell(R_j)$, otherwise we can take $R=Q$ which fulfills $(ii)$. Without loss of generality, we can also assume that $\ell(P)\geq\ell(R_j)$ (recall that $R_j\subset B_P$, so $\ell(P)\gtrsim\ell(R_j)$). Otherwise, we replace $P$ by a suitable ancestor from $\Tree(Q)$ with side length comparable to $\ell(R_j)$, which must exists thanks to the previous assumption $\ell(Q)\geq\ell(R_j)$.

Let $R\in\Tree(Q)$ be a cube with minimal side length such that $R_j\subset B_R$ and $\ell(R)\geq\ell(R_j)$, that is, $\ell(R)\leq\ell(S)$ for all $S\in\Tree(Q)$ with  $R_j\subset B_S$ and $\ell(S)\geq\ell(R_j)$. In particular, notice that $P$ may coincide with $R$, and in any case $\ell(R)\leq\ell(P)$. If $\ell(R_j)\in I_R$, that is $\ell(R)\geq\ell(R_j)\geq\ell(R)/2$, then $R$ fulfills $(i)$ if $\alpha$ is big enough, and we are done. On the contrary, assume that $\ell(R)/2>\ell(R_j)$. Since $R_j\subset B_R$ and $R_j\cap\supp\mu\neq\varnothing$, there exists $R'\in\DD^\mu$ such that $\ell(R')=\ell(R)$, $\dist(R',R)\lesssim\ell(R)$ and
$R'\cap R_j\neq\varnothing$. Therefore, there exists a son $R''$ of $R'$ such that $R''\cap R_j\neq\varnothing$, so $R_j\subset B_{R''}$ if $c_1$ is big enough. By the minimality of $R$, we must have $R''\notin\Tree(Q)$, thus $R\in\partial\Tree(Q)$ if $A\geq1$ in \eqref{frontera tree} is big enough, and then $(ii)$ is fulfilled for some $\alpha$ big enough.
\end{proof}

\begin{proof}[Proof of {\em Claim \ref{claim2}}]
Let us first prove (\ref{eq25}). If $j\in J_1$ then $R_j\subset B_P$ and, in particular, $\ell(R_j)\lesssim\ell(P)$. Thus, by Cauchy-Schwarz inequality,
\begin{equation}\label{claim2eq1}
\begin{split}
\bigg|&\sum_{\begin{subarray}{c}j\in J_1:\, R_j\cap\partial A(x,\epsilon_{m+1},\epsilon_m)\neq\varnothing\end{subarray}}\!\!
|\nu_b^j|(A(x,\epsilon_{m+1},\epsilon_m))\bigg|^2\\
&\qquad=\bigg|\sum_{k:\,2^{-k}\lesssim\ell(P)}
\bigg(\frac{2^{-k}}{\ell(P)}\bigg)^{1/4}
\bigg(\frac{\ell(P)}{2^{-k}}\bigg)^{1/4}\!\!
\sum_{\begin{subarray}{c}j\in J_1:\,\ell(R_j)\in I_k\\ R_j\cap\partial A(x,\epsilon_{m+1},\epsilon_m)\neq\varnothing\end{subarray}}
|\nu_b^j|(A(x,\epsilon_{m+1},\epsilon_m))\bigg|^2\\
&\qquad\lesssim\sum_{k:\,2^{-k}\lesssim\ell(P)}
\bigg(\frac{\ell(P)}{2^{-k}}\bigg)^{1/2}\,\,
\bigg|\sum_{\begin{subarray}{c}j\in J_1:\,\ell(R_j)\in I_k\\ R_j\cap\partial A(x,\epsilon_{m+1},\epsilon_m)\neq\varnothing\end{subarray}}
|\nu_b^j|(A(x,\epsilon_{m+1},\epsilon_m))\bigg|^2.
\end{split}
\end{equation}
Using that $|\nu_b^j|(A(x,\epsilon_{m+1},\epsilon_m))\lesssim\nu(Q_j)$ and that the $Q_j$'s have bounded overlap, from the definition of $J_1$ we see that
\begin{equation}\label{claim2eq2}
\begin{split}
\sum_{\begin{subarray}{c}j\in J_1:\,\ell(R_j)\in I_k\\ R_j\cap\partial A(x,\epsilon_{m+1},\epsilon_m)\neq\varnothing\end{subarray}}\!\!
|\nu_b^j|(A(x,\epsilon_{m+1},\epsilon_m))
&\lesssim\sum_{\begin{subarray}{c}R\in\Tree(Q):\,
\ell(R)\in I_k,\\ B_R\cap\partial A(x,\epsilon_{m+1},\epsilon_m)\neq\varnothing,\\ R\subset\alpha B_P,\,\exists R_j\subset B_R\end{subarray}}\!\!
\nu(B_R).
\end{split}
\end{equation}
If $6Q_j=R_j\subset B_R$ then $\nu(6Q_j)\leq\nu(B_R)\lesssim \lambda\mu(B_R)\lesssim\lambda\mu(R)$ by (\ref{lema CZ 2}). From
\eqref{claim2eq2} we infer
\begin{equation}\label{claim2eq2xx}
\begin{split}
\sum_{\begin{subarray}{c}j\in J_1:\,\ell(R_j)\in I_k\\ R_j\cap\partial A(x,\epsilon_{m+1},\epsilon_m)\neq\varnothing\end{subarray}}\!\!
|\nu_b^j|(A(x,\epsilon_{m+1},\epsilon_m))
&\lesssim\lambda
\sum_{\begin{subarray}{c}R\in\Tree(Q):\,
\ell(R)\in I_k,\\ B_R\cap\partial A(x,\epsilon_{m+1},\epsilon_m)\neq\varnothing,\\ R\subset\alpha B_P,\,\exists R_j\subset B_R\end{subarray}}\!\!
\mu(R).
\end{split}
\end{equation}
We want to show that the right hand side of \eqref{claim2eq2xx} can be estimated by $\lambda 2^{-k}\ell(P)^{n-1}$. To this end, we can suppose that $\ell(R)\leq\ell(P)$, otherwise the estimate becomes trivial because we are already assuming $2^{-k}\lesssim\ell(P)$ and $\ell(R)\in I_k$ (so in this last case there is only a finite and uniformly bounded number of terms in the sum above). Suppose now that $\ell(R)\leq\ell(P)$. Since $R\subset\alpha B_P$ then $R\subset\bigcup_{P'\in V(P)}P'$ if the constant $C_1$ in the definition of $V(P)$ is big enough. Thus, $R\subset P'$ for some $P'\in V(P)$. Note that $P'\in\Tree(Q)$ because $R\in\Tree(Q)$, and so we finally get $R\in\Tree(P')$. Then, from \eqref{claim2eq2xx} and the estimates on annuli from Lemma \ref{annulus} we obtain
\begin{equation}\label{claim2eq2xxx}
\begin{split}
\sum_{\begin{subarray}{c}j\in J_1:\,\ell(R_j)\in I_k\\ R_j\cap\partial A(x,\epsilon_{m+1},\epsilon_m)\neq\varnothing\end{subarray}}\!\!
|\nu_b^j|(A(x,\epsilon_{m+1},\epsilon_m))
&\lesssim\lambda
\sum_{P'\in V(P)}\sum_{\begin{subarray}{c}R\in\Tree(P'):\,
\ell(R)\in I_k,\\ B_R\cap\partial A(x,\epsilon_{m+1},\epsilon_m)\neq\varnothing\end{subarray}}\!\!
\mu(R)\\
&\lesssim \lambda 2^{-k}\ell(P)^{n-1},
\end{split}
\end{equation}
as desired. Finally, (\ref{eq25}) follows from (\ref{claim2eq1}) and (\ref{claim2eq2xxx}).

Let us turn our attention to (\ref{eq30}) now. Recall that, given $j\in J_2$,  $R(j)\in\partial\Tree(Q)$ denotes some cube such that $R(j)\subset \alpha B_P$ and $R_j\subset B_{R(j)}$. Similarly to (\ref{claim2eq1}), by H\"older's inequality we get
\begin{equation}\label{claim2eq3}
\begin{split}
&\bigg|\sum_{\begin{subarray}{c}j\in J_2:\\ R_j\cap\partial A(x,\epsilon_{m+1},\epsilon_m)\neq\varnothing\end{subarray}}\!\!
|\nu_b^j|(A(x,\epsilon_{m+1},\epsilon_m))\bigg|^{3/2}\\
&\quad\leq\bigg|\sum_{\begin{subarray}{c}R\in\partial\Tree(Q):\,R\subset \alpha B_P\\ B_R\cap\partial A(x,\epsilon_{m+1},\epsilon_m)\neq\varnothing\end{subarray}}
\sum_{j\in J_2:\, R(j)=R}
|\nu_b^j|(B_R\cap A(x,\epsilon_{m+1},\epsilon_m))\bigg|^{3/2}\\
&\quad\lesssim\sum_{k:\,2^{-k}\lesssim\ell(P)}
\bigg(\frac{\ell(P)}{2^{-k}}\bigg)^{1/4}
\bigg|\!\!\!\sum_{\begin{subarray}{c}R\in\partial\Tree(Q):\\ R\subset \alpha B_P,\,\ell(R)=2^{-k}\\ B_R\cap\partial A(x,\epsilon_{m+1},\epsilon_m)\neq\varnothing\end{subarray}}
\!\!\!\sum_{j\in J_2:\, R(j)=R}
|\nu_b^j|(B_R\cap A(x,\epsilon_{m+1},\epsilon_m))\bigg|^{3/2}.
\end{split}
\end{equation}
For the cubes $R=R(j)$ in the last sum above, note that $R_j\subset B_{R}$ (see the definition of $J_2$). So, as we did before \eqref{claim2eq2xx},  $\nu(B_R)\lesssim \lambda\mu(B_{R})\lesssim\lambda\mu(R)$ by (\ref{lema CZ 2}).
Using that $\|\nu_b^j\|\lesssim\nu(Q_j)$, that the $Q_j$'s have bounded overlap and that $\nu(B_{R})\lesssim\lambda\mu(B_{R})$, we deduce that
\begin{equation}\label{claim2eq3x}
\begin{split}
&\sum_{\begin{subarray}{c}R\in\partial\Tree(Q):\, R\subset \alpha B_P,\,\ell(R)=2^{-k}\\ B_R\cap\partial A(x,\epsilon_{m+1},\epsilon_m)\neq\varnothing\end{subarray}}\,
\sum_{j\in J_2:\, R(j)=R}
|\nu_b^j|(B_R\cap A(x,\epsilon_{m+1},\epsilon_m))\\
&\qquad\qquad\qquad\lesssim\!\!\!\!
\sum_{\begin{subarray}{c}R\in\partial\Tree(Q):\\ R\subset \alpha B_P,\,\ell(R)=2^{-k}\\ B_R\cap\partial A(x,\epsilon_{m+1},\epsilon_m)\neq\varnothing\end{subarray}}\!\!
\sum_{j\in J_2:\, R(j)=R}\nu(Q_j)
\lesssim\!\!\!\!
\sum_{\begin{subarray}{c}R\in\partial\Tree(Q):\\ R\subset \alpha B_P,\,\ell(R)=2^{-k}\\ B_R\cap\partial A(x,\epsilon_{m+1},\epsilon_m)\neq\varnothing\end{subarray}}\!\!\!\nu(B_R)\\
&\qquad\qquad\qquad\lesssim\lambda\sum_{\begin{subarray}{c}R\in\partial\Tree(Q):\, R\subset \alpha B_P,\,\ell(R)=2^{-k}\\ B_R\cap\partial A(x,\ \epsilon_{m+1},\epsilon_m)\neq\varnothing\end{subarray}}\!\!\!\mu(R).
\end{split}
\end{equation}
As we did in the case of $J_1$, now we want to show that the last term above can be estimated by $\lambda2^{-k}\ell(P)^{n-1}$. We argue similarly to what we did before \eqref{claim2eq2xxx}. If $R$ is as in the right hand side of the last inequality in \eqref{claim2eq3x}, since $R\subset \alpha B_P$ we have $\ell(R)\lesssim\ell(P)$, and thus we can assume $\ell(R)\leq\ell(P)$ (otherwise the estimate that we want to show becomes trivial). Since $R\subset\alpha B_P$ then $R\subset\bigcup_{P'\in V(P)}P'$ if the constant $C_1$ in the definition of $V(P)$ is big enough. Thus, $R\subset P'$ for some $P'\in V(P)$ and $R\in\Tree(P')$ (recall that $R\in\partial\Tree(Q)$ implies $R\in\Tree(Q)$). Then, from \eqref{claim2eq3x} and the estimates on annuli from Lemma \ref{annulus} we obtain
\begin{equation}\label{claim2eq3xx}
\begin{split}
&\sum_{\begin{subarray}{c}R\in\partial\Tree(Q):\, R\subset \alpha B_P,\,\ell(R)=2^{-k}\\ B_R\cap\partial A(x,\epsilon_{m+1},\epsilon_m)\neq\varnothing\end{subarray}}\,
\sum_{j\in J_2:\, R(j)=R}
|\nu_b^j|(B_R\cap A(x,\epsilon_{m+1},\epsilon_m))\\
&\qquad\qquad\qquad\lesssim\lambda
\sum_{P'\in V(P)}\sum_{\begin{subarray}{c}R\in\Tree(P'):\,
\ell(R)\in I_k,\\ B_R\cap\partial A(x,\epsilon_{m+1},\epsilon_m)\neq\varnothing\end{subarray}}\!\!
\mu(R)
\lesssim\lambda2^{-k}\ell(P)^{n-1},
\end{split}
\end{equation}
as desired.

Combining (\ref{claim2eq3xx}) with (\ref{claim2eq3}) we get
\begin{equation}\label{claim2eq4}
\begin{split}
&\bigg|\sum_{\begin{subarray}{c}j\in J_2:\\ R_j\cap\partial A(x,\epsilon_{m+1},\epsilon_m)\neq\varnothing\end{subarray}}\!\!
|\nu_b^j|(A(x,\epsilon_{m+1},\epsilon_m))\bigg|^{3/2}\\
&\quad\lesssim\lambda^{1/2}\ell(P)^{n/2}\!\!\sum_{k:\,2^{-k}\lesssim\ell(P)}
\!\!\bigg(\frac{2^{-k}}{\ell(P)}\bigg)^{1/4}\!\!\!\!\!
\sum_{\begin{subarray}{c}R\in\partial\Tree(Q):\\ R\subset \alpha B_P,\,\ell(R)=2^{-k}\\ B_R\cap\partial A(x,\epsilon_{m+1},\epsilon_m)\neq\varnothing\end{subarray}}
\!\!\!\sum_{\begin{subarray}{c}j\in J_2:\\ R(j)=R\end{subarray}}
|\nu_b^j|(B_R\cap A(x,\epsilon_{m+1},\epsilon_m))\\
&\quad\lesssim\lambda^{1/2}\ell(P)^{n/2}
\sum_{\begin{subarray}{c}R\in\partial\Tree(Q):\,R\subset \alpha B_P\end{subarray}}\,\sum_{j\in J_2:\, R(j)=R}\!\!\bigg(\frac{\ell(R)}{\ell(P)}\bigg)^{1/4}
|\nu_b^j|(B_R\cap A(x,\epsilon_{m+1},\epsilon_m)).
\end{split}
\end{equation}
Finally, (\ref{eq30}) is a consequence of (\ref{claim2eq4}) and the trivial estimate
\begin{equation*}
\begin{split}
&\sum_{\begin{subarray}{c}j\in J_2:\, R_j\cap\partial A(x,\epsilon_{m+1},\epsilon_m)\neq\varnothing\end{subarray}}\!\!
|\nu_b^j|(A(x,\epsilon_{m+1},\epsilon_m))
\lesssim\nu(B_P),
\end{split}
\end{equation*}
which holds if $c_1$ in \eqref{defbq} is big enough because $\|\nu_b^j\|\lesssim\nu(Q_j)$ and the $Q_j$'s have bounded overlap.

\end{proof}

\section{$\VV_\rho\circ\TT^\mu:L^p(\mu)\to L^p(\mu)$ is a bounded operator for $1<p<\infty$}\label{sec4}

Under the assumptions of Theorem \ref{teopri},
the boundedness of $\VV_\rho\circ\TT^\mu$ in $L^p(\mu)$ for $1<p<2$ follows by interpolation, taking into account that it is bounded in $L^2(\mu)$ and from $L^1(\mu)$ to $L^{1,\infty}(\mu)$, by
Theorem \ref{weakL1 thm smooth} and
Theorem \ref{weakL1 thm sl}. So it only remains to prove the
boundedness in $L^p(\mu)$ for $2<p<\infty$. This task is carried out in the next theorem.

\begin{theorem}\label{Lp thm}
Let $\mu$ be a uniformly $n$-rectifiable measure in $\R^d$. Let $K$ be an odd kernel satisfying \eqref{4eq333} and consider the operator $T$ associated to $K$ defined in \eqref{eqtkmu**}.
 Then $\VV_\rho\circ\TT^\mu$ is a bounded operator in $L^p(\mu)$ for all $\rho>2$ and all $2<p<\infty$.
\end{theorem}

\begin{proof}
We are going to prove that if $\mu$ is a uniformly $n$-rectifiable measure then $\MM^\sharp_{\DD^\mu}\circ\VV_\rho\circ\TT^\mu$ is a bounded operator in $L^p(\mu)$ for all $2<p<\infty$, where $\MM^\sharp_{\DD^\mu}$ denotes the dyadic sharp maximal function, that is,
\begin{equation*}
\MM^\sharp_{\DD^\mu}f(x)
=\sup_{\begin{subarray}{c}D\in\DD^\mu:\,x\in D\end{subarray}}
m_D|f-m_Df|.
\end{equation*}
The theorem will then follow from the fact that the maximal operator defined by
$\MM_{\DD^\mu}f(x) =\sup_{\begin{subarray}{c}D\in\DD^\mu:\,x\in D\end{subarray}}
m_D|f|$
 can be controlled in $L^p(\mu)$ norm
by $\MM_{\DD^\mu}^\sharp$. That is, $\|\MM_{\DD^\mu}f\|_{L^p(\mu)} \lesssim \|\MM_{\DD^\mu}^\sharp f\|_{L^p(\mu)} $ (see \cite[ Lemma 6.9]{Duo}, for example).

Fix $f\in L^p(\mu)$ and $x_0\in\supp\mu$. Then,
\begin{equation}\label{Msharp 1}
(\MM^\sharp_{\DD^\mu}\circ\VV_\rho\circ\TT^\mu)f(x_0)
=\sup_{\begin{subarray}{c}D\in\DD^\mu:\,x_0\in D\end{subarray}}
m_D|(\VV_\rho\circ\TT^\mu)f-m_D((\VV_\rho\circ\TT^\mu) f)|.
\end{equation}
Given $D\in\DD^\mu$ such that $x_0\in D$, we decompose $f=f_1+f_2$ with $f_1:=f\chi_{3D}$ and $f_2:=f-f_1$. Since
$\VV_\rho\circ\TT^\mu$ is sublinear and positive,
$|(\VV_\rho\circ\TT^\mu)f-(\VV_\rho\circ\TT^\mu)f_2|\leq(\VV_\rho\circ\TT^\mu)f_1$ and so
$|(\VV_\rho\circ\TT^\mu)f-c|\leq(\VV_\rho\circ\TT^\mu)f_1+|(\VV_\rho\circ\TT^\mu)f_2-c|$ for all $c\in\R$. If we take
$c=(\VV_\rho\circ\TT^\mu) f_2(z_D)$, where $z_D$ denotes the center of $D$ (we may assume that $c<\infty$), then
\begin{equation}\label{Msharp 2}
\begin{split}
m_D|(\VV_\rho\circ\TT^\mu)f-&m_D((\VV_\rho\circ\TT^\mu) f)|\\
&\leq2m_D|(\VV_\rho\circ\TT^\mu)f-(\VV_\rho\circ\TT^\mu) f_2(z_D)|\\
&\lesssim m_D(\VV_\rho\circ\TT^\mu)f_1
+m_D|(\VV_\rho\circ\TT^\mu)f_2-(\VV_\rho\circ\TT^\mu) f_2(z_D)|\\
&=:I_1+I_2.
\end{split}
\end{equation}

A good estimate for $I_1$ can be easily derived using Cauchy-Schwarz's inequality, Theorem \ref{weakL1 thm smooth}$(i)$ and that $\mu$ is $n$-AD regular. More precisely,
\begin{equation}\label{Msharp 3}
\begin{split}
I_1\lesssim\left(\frac{1}{\mu(D)}\int_D|(\VV_\rho\circ\TT^\mu)f_1|^2\,d\mu\right)^{1/2}
\lesssim\left(\frac{1}{\mu(D)}\int_{3D}|f|^2\,d\mu\right)^{1/2}
\lesssim\MM_2f(x_0).
\end{split}
\end{equation}

The estimate of $I_2$ is much more involved. Given $x\in D$, by the triangle inequality we have
\begin{equation}\label{Msharp 4}
\begin{split}
|(\VV_\rho\circ\TT^\mu)f_2(x)&-(\VV_\rho\circ\TT^\mu)f_2(z_D)|\\
&\leq\sup_{\{\epsilon_m\}_{m\in\Z}}\bigg(\sum_{m\in\Z}
\left|T^\mu_{\epsilon_m,\epsilon_{m+1}}f_2(x)
-T^\mu_{\epsilon_m,\epsilon_{m+1}}f_2(z_D)\right|^\rho\bigg)^{1/\rho},
\end{split}
\end{equation}
where the supremum is taken over all non-increasing sequences $\{\epsilon_m\}_{m\in\Z}$ of positive numbers $\epsilon_m$.
In order to estimate the right hand side of (\ref{Msharp 4}), take one of such sequences $\{\epsilon_m\}_{m\in\Z}$ and note that, by the triangle inequality again,
\begin{equation}\label{Msharp 5}
\begin{split}
\left|T^\mu_{\epsilon_m,\epsilon_{m+1}}\right.&\left.f_2(x)
-T^\mu_{\epsilon_m,\epsilon_{m+1}}f_2(z_D)\right|\\
&\leq\int\chi_{(\epsilon_{m+1},\epsilon_m]}(|x-y|)
\left|K(x-y)-K(z_D-y)\right||f_2(y)|\,d\mu(y)\\
&\quad+\int\left|\chi_{(\epsilon_{m+1},\epsilon_m]}(|x-y|)
-\chi_{(\epsilon_{m+1},\epsilon_m]}(|z_D-y|)\right||K(z_D-y)|
|f_2(y)|\,d\mu(y)\\
&=:a_m+b_m.
\end{split}
\end{equation}
Since $x$ and $z_D$ belong to $D$ and $f_2$ vanishes in $3D$, we can assume that $\epsilon_{m+1}>\ell(D)$ in the definition of $a_m$ and $b_m$ for all $m\in\Z$.

Let us first look at the sum relative to the $a_m$'s for $m\in\Z$. Using that $\rho>1$, the regularity of the kernel $K$, that $f_2$ vanishes in $3D$, and that $\mu$ is $n$-AD regular, for each $x\in D$ we have
\begin{equation}\label{Msharp 6}
\begin{split}
\bigg(\sum_{m\in\Z}a_m^\rho\bigg)^{1/\rho}
&\leq\sum_{m\in\Z}\int_{\epsilon_{m+1}<|x-y|\leq\epsilon_m}
|K(x-y)-K(z_D-y)||f_2(y)|\,d\mu(y)\\
&\lesssim\ell(D)\sum_{m\in\Z}\int_{\epsilon_{m+1}<|x-y|\leq\epsilon_m}
\frac{|f_2(y)|}{|y-z_D|^{n+1}}\,d\mu(y)\\
&\leq\ell(D)\int_{\R^d\setminus3D}
\frac{|f(y)|}{|y-z_D|^{n+1}}\,d\mu(y)\lesssim\MM f(x_0)
\leq\MM_2 f(x_0),
\end{split}
\end{equation}
where we also used Cauchy-Schwarz's inequality in the last estimate above.

The sum relative to the $b_m$'s for $m\in\Z$ requires a more delicate analysis. We split $\Z=J_1\cup J_2$, where
\begin{equation*}
\begin{split}
J_1:=\{m\in\Z: \epsilon_m-\epsilon_{m+1}>\ell(D)\},\\
J_2:=\{m\in\Z: \epsilon_m-\epsilon_{m+1}\leq\ell(D)\}.
\end{split}
\end{equation*}
To shorten notation, we also set
\begin{equation*}
\begin{split}
A_m^1(z_D):=A(z_D,\epsilon_m-\ell(D),\epsilon_m+\ell(D))
\quad\text{and}\quad A_m^2(x):=A(x,\epsilon_{m+1},\epsilon_m).
\end{split}
\end{equation*}
Since we are assuming $\epsilon_{m+1}>\ell(D)$ for all $m\in\Z$, both  $A_m^1(z_D)$ and $A_{m+1}^1(z_D)$ are well defined for all $m\in J_1$. Moreover, since $|x-z_D|\leq\ell(D)$ for all $x\in D$, we easily get
\begin{equation}\label{Msharp 7}
\begin{split}
&\left|\chi_{(\epsilon_{m+1},\epsilon_m]}(|x-\cdot|)
-\chi_{(\epsilon_{m+1},\epsilon_m]}(|z_D-\cdot|)\right|
\leq\chi_{A_m^1(z_D)}+\chi_{A_{m+1}^1(z_D)}
\quad\text{for all } m\in J_1,\\
&\left|\chi_{(\epsilon_{m+1},\epsilon_m]}(|x-\cdot|)
-\chi_{(\epsilon_{m+1},\epsilon_m]}(|z_D-\cdot|)\right|
\leq\chi_{A_{m}^2(z_D)}+\chi_{A_m^2(x)}
\quad\text{for all } m\in J_2.
\end{split}
\end{equation}
We are going to split the sum associated with the $b_m$'s in terms of $J_1$ and $J_2$, using in each case the corresponding estimate from (\ref{Msharp 7}).

Concerning the sum over $J_1$, since $\rho>2$, (\ref{Msharp 7}) yields
\begin{equation}\label{Msharp 8}
\begin{split}
\bigg(\sum_{m\in J_1}b_m^\rho\bigg)^{1/\rho}
&\lesssim\Bigg(\sum_{m\in J_1}\bigg(
\int_{A^1_m(z_D)}|K(z_D-y)||f_2(y)|\,d\mu(y)\bigg)^2\Bigg)^{1/2}\\
&\quad+\Bigg(\sum_{m\in J_1}\bigg(
\int_{A^1_{m+1}(z_D)}|K(z_D-y)||f_2(y)|\,d\mu(y)\bigg)^2\Bigg)^{1/2}\\
&=:S_1+S_2.
\end{split}
\end{equation}
The arguments for estimating $S_1$ and $S_2$ are almost the same, so we will only give the details for $S_1$. Since $f_2$ vanishes in $3D$,
\begin{equation}\label{Msharp 9}
\begin{split}
S_1^2&=\sum_{k\in\Z}\sum_{\begin{subarray}{c}m\in J_1:\\ \epsilon_m\in I_k\end{subarray}}\bigg(
\int_{A^1_m(z_D)}|K(z_D-y)||f_2(y)|\,d\mu(y)\bigg)^2
\lesssim\!\!\!\sum_{\begin{subarray}{c}Q\in\DD^\mu:\\Q\supset D\end{subarray}}
\sum_{\begin{subarray}{c}m\in J_1:\\ \epsilon_m\in I_Q\end{subarray}}
\!\!\!\frac{\left|(|f_2|\mu)\left(A^1_m(z_D)\right)\right|^2}{\ell(Q)^{2n}}.
\end{split}
\end{equation}

Our task now is to bound $\left|(|f_2|\mu)\left(A^1_m(z_D)\right)\right|^2$. This is done by splitting the annulus $A^1_m(z_D)$, whose width equals $2\ell(D)$, into disjoint cubes $P\in\DD^\mu$ such that $\ell(P)=\ell(D)$ and grouping them properly in terms of the corona decomposition, in order to be able to apply Carleson's embedding theorem later. More precisely, for $Q\supset D$ and $\epsilon_m\in I_Q$, we have
$$A^1_m(z_D)\cap \supp(\mu)\subset\bigcup_{R\in V(Q)}R \subset
\Biggl(\,\bigcup_{R\in V(Q)}\bigcup_{\begin{subarray}{c}P\in\Tree(R):\\ \ell(P)=\ell(D)\end{subarray}} P\Biggr) \cup \Biggl(\,
\bigcup_{R\in V(Q)}\bigcup_{\begin{subarray}{c}P\in\Stop(R):\\ \ell(P)\geq\ell(D)\end{subarray}} P \Biggr)
.$$
 Recall also that the number of cubes in $V(Q)$ is bounded independently of $Q$. Therefore,
\begin{equation}\label{Msharp 10}
\begin{split}
\left|(|f_2|\mu)\left(A^1_m(z_D)\right)\right|^2
&\lesssim\sum_{R\in V(Q)}\bigg|\sum_{\begin{subarray}{c}P\in\Tree(R):\\ \ell(P)=\ell(D)\end{subarray}}(|f_2|\mu)\left(A^1_m(z_D)\cap P\right)\bigg|^2\\
&\quad+\sum_{R\in V(Q)}\bigg|\sum_{\begin{subarray}{c}P\in\Stop(R):\\ \ell(P)\geq\ell(D)\end{subarray}}(|f_2|\mu)\left(A^1_m(z_D)\cap P\right)\bigg|^2.
\end{split}
\end{equation}
The first term on the right hand side of (\ref{Msharp 10}) can be easily estimated using Cauchy-Schwarz's inequality, that the $P$'s such that $\ell(P)=\ell(D)$ are disjoint and Lemma \ref{annulus}. That is,
\begin{equation}\label{Msharp 11}
\begin{split}
&\bigg|\sum_{\begin{subarray}{c}P\in\Tree(R):\\ \ell(P)=\ell(D)\end{subarray}}(|f_2|\mu)\left(A^1_m(z_D)\cap P\right)\bigg|^2
=\bigg|\int\bigg(\sum_{\begin{subarray}{c}P\in\Tree(R):\\ \ell(P)=\ell(D)\end{subarray}}\chi_{A^1_m(z_D)\cap P}\bigg)|f_2|\,d\mu\bigg|^2\\
&\quad\qquad\qquad\leq\bigg(\sum_{\begin{subarray}{c}P\in\Tree(R):\\ \ell(P)=\ell(D)\end{subarray}}\mu\left(A^1_m(z_D)\cap P\right)\bigg)\bigg(\sum_{\begin{subarray}{c}P\in\Tree(R):\\ \ell(P)=\ell(D)\end{subarray}}(|f_2|^2\mu)\left(A^1_m(z_D)\cap P\right)\bigg)\\
&\quad\qquad\qquad\lesssim\ell(D)\ell(R)^{n-1}\sum_{\begin{subarray}{c}P\in\Tree(R):\\ \ell(P)=\ell(D)\end{subarray}}
\left(|f_2|^2\mu\right)\left(A^1_m(z_D)\cap P\right).
\end{split}
\end{equation}
The second term on the right hand side of (\ref{Msharp 10}) is estimated similarly but, since the cubes in $\Stop(R)$ may have different side length, we need to introduce an auxiliary splitting of the sum in terms of the side length. This extra splitting, combined with an application of Cauchy-Schwarz inequality yields
\begin{equation}\label{Msharp 12}
\begin{split}
&\bigg|\sum_{\begin{subarray}{c}P\in\Stop(R):\\ \ell(P)\geq\ell(D)\end{subarray}}
(|f_2|\mu)\left(A^1_m(z_D)\cap P\right)\bigg|^2
\!\!=\bigg|\sum_{j\geq0}\frac{2^{j/4}}{2^{j/4}}
\sum_{\begin{subarray}{c}P\in\Stop(R):\, \ell(P)\geq\ell(D)\\ \ell(P)=2^{-j}\ell(R)\end{subarray}}\!\!
(|f_2|\mu)\left(A^1_m(z_D)\cap P\right)\bigg|^2\\
&\quad\lesssim\sum_{j\geq0}2^{j/2}\bigg|\sum_{\begin{subarray}{c}P\in\Stop(R):\, \ell(P)\geq\ell(D)\\ \ell(P)=2^{-j}\ell(R)\end{subarray}}(|f_2|\mu)\left(A^1_m(z_D)\cap P\right)\bigg|^2\\
&\quad\leq\sum_{j\geq0}2^{j/2}\bigg(\sum_{\begin{subarray}{c}P\in\Stop(R):\\ \ell(P)\geq\ell(D)\\ \ell(P)=2^{-j}\ell(R)\end{subarray}}\mu\left(A^1_m(z_D)\cap P\right)\bigg)
\bigg(\sum_{\begin{subarray}{c}P\in\Stop(R):\\ \ell(P)\geq\ell(D)\\ \ell(P)=2^{-j}\ell(R)\end{subarray}}\left(|f_2|^2\mu\right)\left(A^1_m(z_D)\cap P\right)\bigg),
\end{split}
\end{equation}
where we also used in the last inequality above that the $P$'s which belong to $\Stop(R)$ are disjoint and Cauchy-Schwarz's inequality. Since the width of the annulus $A^1_m(z_D)$ equals $2\ell(D)$, if $P\in\Stop(R)$ is such that $\ell(P)=2^{-j}\ell(R)\geq\ell(D)$ and $A^1_m(z_D)\cap P\neq\varnothing$ then $$P\subset A(z_D,\epsilon_m-C2^{-j}\ell(R),\epsilon_m+C2^{-j}\ell(R))$$ for some $C>0$ depending only on $n$, $d$ and $\mu$. Hence, Lemma \ref{annulus} gives
\begin{equation*}
\begin{split}
\sum_{\begin{subarray}{c}P\in\Stop(R):\, \ell(P)\geq\ell(D)\\ \ell(P)=2^{-j}\ell(R)\end{subarray}}\mu\left(A^1_m(z_D)\cap P\right)\lesssim2^{-j}\ell(R)^n,
\end{split}
\end{equation*}
which plugged into (\ref{Msharp 12}) yields
\begin{equation}\label{Msharp 13}
\begin{split}
\bigg|\sum_{\begin{subarray}{c}P\in\Stop(R):\\ \ell(P)\geq\ell(D)\end{subarray}}
(|f_2|\mu)\left(A^1_m(z_D)\cap P\right)\bigg|^2
&\lesssim\sum_{j\geq0}2^{-j/2}\ell(R)^n
\!\!\sum_{\begin{subarray}{c}P\in\Stop(R):\\ \ell(P)\geq\ell(D)\\ \ell(P)=2^{-j}\ell(R)\end{subarray}}\left(|f_2|^2\mu\right)\left(A^1_m(z_D)\cap P\right)\\
&\leq\!\!\sum_{\begin{subarray}{c}P\in\Stop(R)\end{subarray}}
\left(\frac{\ell(P)}{\ell(R)}\right)^{1/2}\ell(R)^n\left(|f_2|^2\mu\right)\left(A^1_m(z_D)\cap P\right).
\end{split}
\end{equation}
Applying (\ref{Msharp 11}) and (\ref{Msharp 13}) to (\ref{Msharp 10}), we  see that
\begin{equation}\label{Msharp 14}
\begin{split}
\left|(|f_2|\mu)\left(A^1_m(z_D)\right)\right|^2
&\lesssim\sum_{R\in V(Q)}\sum_{\begin{subarray}{c}P\in\Tree(R):\\ \ell(P)=\ell(D)\end{subarray}}\frac{\ell(D)}{\ell(R)}\,\ell(R)^{n}
\left(|f_2|^2\mu\right)\left(A^1_m(z_D)\cap P\right)\\
&\quad+\sum_{R\in V(Q)}\sum_{\begin{subarray}{c}P\in\Stop(R)\end{subarray}}
\left(\frac{\ell(P)}{\ell(R)}\right)^{1/2}\ell(R)^n\left(|f_2|^2\mu\right)\left(A^1_m(z_D)\cap P\right).
\end{split}
\end{equation}

Now that we have estimated $\left|(|f_2|\mu)\left(A^1_m(z_D)\right)\right|^2$, we can derive a bound for $S_1^2$. Since $\ell(Q)=\ell(R)$ for all $R\in V(Q)$, (\ref{Msharp 9}) and (\ref{Msharp 14}) imply that
\begin{equation}\label{Msharp 15}
\begin{split}
S_1^2&\lesssim\sum_{\begin{subarray}{c}Q\in\DD^\mu:\\Q\supset D\end{subarray}}
\sum_{\begin{subarray}{c}m\in J_1:\\ \epsilon_m\in I_Q\end{subarray}}
\sum_{R\in V(Q)}\sum_{\begin{subarray}{c}P\in\Tree(R):\\ \ell(P)=\ell(D)\end{subarray}}\frac{\ell(D)}{\ell(R)}\,\ell(R)^{-n}
\left(|f_2|^2\mu\right)\left(A^1_m(z_D)\cap P\right)\\
&\quad+\sum_{\begin{subarray}{c}Q\in\DD^\mu:\\Q\supset D\end{subarray}}
\sum_{\begin{subarray}{c}m\in J_1:\\ \epsilon_m\in I_Q\end{subarray}}
\sum_{R\in V(Q)}\sum_{\begin{subarray}{c}P\in\Stop(R)\end{subarray}}
\left(\frac{\ell(P)}{\ell(R)}\right)^{1/2}\ell(R)^{-n}\left(|f_2|^2\mu\right)\left(A^1_m(z_D)\cap P\right).
\end{split}
\end{equation}
Note that,  for $m\in J_1$, each (closed) annulus $A^1_m(z_D)$ overlaps only with the two neighbors $A^1_{m-1}(z_D)$, $A^1_{m+1}(z_D)$
at the boundaries because $\{\epsilon_m\}_{m\in\Z}$ is a non-increasing sequence. Therefore, from (\ref{Msharp 15}) we deduce that
\begin{equation}\label{Msharp 16}
\begin{split}
S_1^2&\lesssim\sum_{\begin{subarray}{c}Q\in\DD^\mu:\\Q\supset D\end{subarray}}
\sum_{R\in V(Q)}\sum_{\begin{subarray}{c}P\in\Tree(R):\\ \ell(P)=\ell(D)\end{subarray}}\frac{\ell(D)}{\ell(R)}\,\ell(R)^{-n}
\left(|f_2|^2\mu\right)\left(P\right)\\
&\quad+\sum_{\begin{subarray}{c}Q\in\DD^\mu:\\Q\supset D\end{subarray}}
\sum_{R\in V(Q)}\sum_{\begin{subarray}{c}P\in\Stop(R)\end{subarray}}
\left(\frac{\ell(P)}{\ell(R)}\right)^{1/2}\ell(R)^{-n}\left(|f_2|^2\mu\right)\left(P\right).
\end{split}
\end{equation}
For the first term on the right hand side of (\ref{Msharp 16}), using that the $P$'s in $\DD^\mu$ such that $\ell(P)=\ell(D)$ are disjoint, that $\mu$ is $n$-AD regular and that $x_0\in D$, we have
\begin{equation}\label{Msharp 17}
\begin{split}
\sum_{\begin{subarray}{c}Q\in\DD^\mu:\\Q\supset D\end{subarray}}
\sum_{R\in V(Q)}\sum_{\begin{subarray}{c}P\in\Tree(R):\\ \ell(P)=\ell(D)\end{subarray}}\frac{\ell(D)}{\ell(R)}\,\ell(R)^{-n}\left(|f_2|^2\mu\right)(P)
&\leq\sum_{\begin{subarray}{c}Q\in\DD^\mu:\\Q\supset D\end{subarray}}
\sum_{R\in V(Q)}\frac{\ell(D)}{\ell(R)}
\frac{\left(|f_2|^2\mu\right)(R)}{\ell(R)^n}\\
&\lesssim\sum_{\begin{subarray}{c}Q\in\DD^\mu:\\Q\supset D\end{subarray}}
\sum_{R\in V(Q)}\frac{\ell(D)}{\ell(Q)}\,
\MM_2 f(x_0)^2\\
&\lesssim \MM_2 f(x_0)^2.
\end{split}
\end{equation}
In order to estimate the second term on the right hand side of (\ref{Msharp 16}), note that $R\in V(Q)$ if and only if $Q\in V(R)$ and that if $D\subset Q$ and $R\in V(Q)$ then $D\subset 3R$, thus
by changing the order of summation and using that the number of cubes in $V(R)$ is bounded independently of $R$ and that $\DD^\mu=\bigcup_{S\in\Top}\Tree(S)$ we see that
\begin{equation}\label{Msharp 18}
\begin{split}
\sum_{\begin{subarray}{c}Q\in\DD^\mu:\\Q\supset D\end{subarray}}
\sum_{R\in V(Q)}&\sum_{\begin{subarray}{c}P\in\Stop(R)\end{subarray}}
\left(\frac{\ell(P)}{\ell(R)}\right)^{1/2}\ell(R)^{-n}\left(|f_2|^2\mu\right)\left(P\right)\\
&\leq\sum_{\begin{subarray}{c}R\in\DD^\mu:\\3R\supset D\end{subarray}}
\sum_{Q\in V(R)}\sum_{\begin{subarray}{c}P\in\Stop(R)\end{subarray}}
\left(\frac{\ell(P)}{\ell(R)}\right)^{1/2}\ell(R)^{-n}\left(|f_2|^2\mu\right)(P)\\
&\lesssim\sum_{S\in\Top}\sum_{\begin{subarray}{c}R\in\Tree(S):\\3R\supset D\end{subarray}}
\sum_{\begin{subarray}{c}P\in\Stop(R)\end{subarray}}
\left(\frac{\ell(P)}{\ell(R)}\right)^{1/2}\ell(R)^{-n}\left(|f_2|^2\mu\right)(P)\\
&\lesssim\sum_{\begin{subarray}{c}S\in\Top\end{subarray}}\sum_{\begin{subarray}{c}P\in\Stop(S)\end{subarray}}\ell(P)^{1/2}\left(|f_2|^2\mu\right)(P)
\sum_{\begin{subarray}{c}R\in\Tree(S):\\3R\supset D\cup P\end{subarray}}
\ell(R)^{-n-1/2},
\end{split}
\end{equation}
where we also used in the last inequality above that, for $S\in\Top$, if $P\in\Stop(R)$ for some $R\in\Tree(S)$ then $P\in\Stop(S)$ and $P\subset R$. Moreover, denoting
$$D(P,D):=\ell(P)+\dist(P,D) +\ell(D),$$
we have
\begin{equation}\label{Msharp 19}
\begin{split}
\sum_{\begin{subarray}{c}R\in\Tree(S):\\3R\supset D\cup P\end{subarray}}
\ell(R)^{-n-1/2}
&\lesssim\sum_{j\in\Z}
\sum_{\begin{subarray}{c}R\in\Tree(S):\,3R\supset D\cup P,\\
2^jD(P,D)<\ell(R)\leq2^{j+1}D(P,D)\end{subarray}}(2^jD(P,D))^{-n-1/2}\\
&\lesssim D(P,D)^{-n-1/2},
\end{split}
\end{equation}
because the number of cubes $R\in\DD^\mu$ such that $3R\supset D\cup P$ and $2^jD(P,D)<\ell(R)\leq2^{j+1}D(P,D)$ is bounded independently of $j\in\Z$, and the statements ``$3R\supset D\cup P$'' and
``$2^jD(P,D)<\ell(R)\leq2^{j+1}D(P,D)$'' are compatible each other only if $j\geq j_0$ for some $j_0\in\Z$ which only depends on $d$, $n$ and $\mu$. Plugging (\ref{Msharp 19}) into (\ref{Msharp 18}), we get
\begin{equation}\label{Msharp 20}
\begin{split}
\sum_{\begin{subarray}{c}Q\in\DD^\mu:\\Q\supset D\end{subarray}}
\sum_{R\in V(Q)}&\sum_{\begin{subarray}{c}P\in\Stop(R)\end{subarray}}
\left(\frac{\ell(P)}{\ell(R)}\right)^{1/2}\ell(R)^{-n}\left(|f_2|^2\mu\right)\left(P\right)\\
&\lesssim\sum_{\begin{subarray}{c}S\in\Top\end{subarray}}\sum_{\begin{subarray}{c}P\in\Stop(S)\end{subarray}}\left(\frac{\ell(P)}{D(P,D)}\right)^{n+1/2}
\frac{\left(|f_2|^2\mu\right)(P)}{\ell(P)^n}.
\end{split}
\end{equation}
Finally, by (\ref{Msharp 17}), (\ref{Msharp 20}), and (\ref{Msharp 16}), we conclude that
\begin{equation}\label{Msharp 21}
\begin{split}
S_1^2\lesssim\MM_2 f(x_0)^2
+\sum_{\begin{subarray}{c}S\in\Top\end{subarray}}\sum_{\begin{subarray}{c}P\in\Stop(S)\end{subarray}}\left(\frac{\ell(P)}{D(P,D)}\right)^{n+1/2}m_P\left(|f|^2\right).
\end{split}
\end{equation}

As we pointed out before, the same estimate holds for $S_2^2$, because the only properties that we used from the annuli $A^1_{m}(z_D)$'s are that they have bounded overlap for $m\in J_1$, that their width is comparable to $\ell(D)$, that they are centered in some point lying in $D\subset Q$ and that they have diameter comparable to $\ell(Q)$. Of course, these properties are also shared by the annuli $A^1_{m+1}(z_D)$'s. Actually, for estimating $S_2$, one can argue exactly as in the case of $S_1$ but replacing $\{m\in J_1:\, \epsilon_{m}\in I_Q\}$  by $\{m\in J_1:\, \epsilon_{m+1}\in I_Q\}$ in the involved arguments.
Therefore, by (\ref{Msharp 21}), the analogous estimate for $S_2$, and (\ref{Msharp 8}), we see that
\begin{equation}\label{Msharp 22}
\begin{split}
\bigg(\sum_{m\in J_1}b_m^\rho\bigg)^{1/\rho}
\lesssim\MM_2 f(x_0)
+\bigg(\sum_{\begin{subarray}{c}S\in\Top\end{subarray}}\sum_{\begin{subarray}{c}P\in\Stop(S)\end{subarray}}\left(\frac{\ell(P)}{D(P,D)}\right)^{n+1/2}m_P\left(|f|^2\right)\bigg)^{1/2}.
\end{split}
\end{equation}

We now deal with the sum relative to the $b_m$'s for $m\in J_2$. The estimates are essentially as in the case of $m\in J_1$, but we include the sketch of the arguments for the reader's convenience. Since $\rho>2$, (\ref{Msharp 7}) yields
\begin{equation}\label{Msharp 23}
\begin{split}
\bigg(\sum_{m\in J_2}b_m^\rho\bigg)^{1/\rho}
&\lesssim\Bigg(\sum_{m\in J_2}\bigg(
\int_{A^2_m(z_D)}|K(z_D-y)||f_2(y)|\,d\mu(y)\bigg)^2\Bigg)^{1/2}\\
&\quad+\Bigg(\sum_{m\in J_2}\bigg(
\int_{A^2_{m}(x)}|K(z_D-y)||f_2(y)|\,d\mu(y)\bigg)^2\Bigg)^{1/2}\\
&=:S_3+S_4.
\end{split}
\end{equation}
The arguments to estimate $S_3$ and $S_4$ are almost the same, so we will only give the details for $S_3$. Since $f_2$ vanishes in $3D$,
\begin{equation}\label{Msharp 24}
\begin{split}
S_3^2&=\sum_{k\in\Z}\sum_{\begin{subarray}{c}m\in J_2:\\ \epsilon_m\in I_k\end{subarray}}\!\bigg(
\int_{A^2_m(z_D)}\!|K(z_D-y)||f_2(y)|\,d\mu(y)\bigg)^2\!\!
\lesssim\sum_{\begin{subarray}{c}Q\in\DD^\mu:\\Q\supset D\end{subarray}}
\sum_{\begin{subarray}{c}m\in J_2:\\ \epsilon_m\in I_Q\end{subarray}}
\!\frac{\left|(|f_2|\mu)\left(A^2_m(z_D)\right)\right|^2}{\ell(Q)^{2n}}.\!
\end{split}
\end{equation}

Once again, our task now is to estimate $\left|(|f_2|\mu)\left(A^2_m(z_D)\right)\right|^2$. As before, this is done by splitting the annulus $A^2_m(z_D)$, whose width is $\epsilon_{m}-\epsilon_{m+1}$, in disjoint cubes $P\in\DD^\mu$ such that $\epsilon_{m}-\epsilon_{m+1}\in I_P$ and grouping them properly in terms of the corona decomposition. Arguing as in (\ref{Msharp 10}), we now have
\begin{equation}\label{Msharp 25}
\begin{split}
\left|(|f_2|\mu)\left(A^2_m(z_D)\right)\right|^2
&\lesssim\sum_{R\in V(Q)}\bigg|\sum_{\begin{subarray}{c}P\in\Tree(R):\\ \epsilon_m-\epsilon_{m+1}\in I_P\end{subarray}}(|f_2|\mu)\left(A^2_m(z_D)\cap P\right)\bigg|^2\\
&\quad+\sum_{R\in V(Q)}\bigg|\sum_{\begin{subarray}{c}P\in\Stop(R):\\ \ell(P)\geq\epsilon_m-\epsilon_{m+1}\end{subarray}}(|f_2|\mu)\left(A^2_m(z_D)\cap P\right)\bigg|^2.
\end{split}
\end{equation}
The first term on the right hand side of (\ref{Msharp 25}) can be easily estimated using Cauchy-Schwarz's inequality, that the $P$'s in $\Tree(R)$ such that $\epsilon_m-\epsilon_{m+1}\in I_P$ are disjoint and Lemma \ref{annulus}. Similarly to what we did in (\ref{Msharp 11}), we now obtain
\begin{equation}\label{Msharp 26}
\begin{split}
&\bigg|\sum_{\begin{subarray}{c}P\in\Tree(R):\\ \epsilon_m-\epsilon_{m+1}\in I_P\end{subarray}}(|f_2|\mu)\left(A^2_m(z_D)\cap P\right)\bigg|^2\\
&\qquad\qquad\qquad\lesssim(\epsilon_m-\epsilon_{m+1})\ell(R)^{n-1}\sum_{\begin{subarray}{c}P\in\Tree(R):\\ \epsilon_m-\epsilon_{m+1}\in I_P\end{subarray}}\left(|f_2|^2\mu\right)\left(A^2_m(z_D)\cap P\right)\\
&\qquad\qquad\qquad\leq\ell(D)\ell(R)^{n-1}\left(|f_2|^2\mu\right)\left(A^2_m(z_D)\cap R\right),
\end{split}
\end{equation}
where we also used in the last inequality above  that $\epsilon_m-\epsilon_{m+1}\leq\ell(D)$, because we are assuming $m\in J_2$.
As before, the second term on the right hand side of (\ref{Msharp 25}) is estimated similarly to (\ref{Msharp 26}) but introducing an auxiliary splitting of the sum in terms of the side length of the cubes. By applying the Cauchy-Schwarz inequality, we can proceed exactly as in (\ref{Msharp 12}) and (\ref{Msharp 13}), but replacing $\ell(D)$ by $\epsilon_m-\epsilon_{m+1}$, and then we deduce that
\begin{equation}\label{Msharp 27}
\begin{split}
&\bigg|\sum_{\begin{subarray}{c}P\in\Stop(R):\\ \ell(P)\geq\epsilon_m-\epsilon_{m+1}\end{subarray}}
(|f_2|\mu)\left(A^1_m(z_D)\cap P\right)\bigg|^2\\
&\qquad\qquad\qquad\lesssim\sum_{\begin{subarray}{c}P\in\Stop(R)\end{subarray}}
\left(\frac{\ell(P)}{\ell(R)}\right)^{1/2}\ell(R)^n\left(|f_2|^2\mu\right)\left(A^2_m(z_D)\cap P\right).
\end{split}
\end{equation}

Combining (\ref{Msharp 24}) and (\ref{Msharp 25}) with (\ref{Msharp 26}) and (\ref{Msharp 27}), and using that $\ell(R)=\ell(Q)$ for all $R\in V(Q)$ and that, for $m\in\Z$, the closed annuli $A^2_m(z_D)$'s overlap  only with the neighboring annuli because  $\{\epsilon_m\}_{m\in\Z}$ is a non-increasing sequence, we conclude that
\begin{equation}\label{Msharp 28}
\begin{split}
S_3^2&\lesssim
\sum_{\begin{subarray}{c}Q\in\DD^\mu:\\Q\supset D\end{subarray}}
\sum_{R\in V(Q)}
\frac{\ell(D)}{\ell(R)}\,\ell(R)^{-n}\left(|f_2|^2\mu\right)\left(R\right)\\
&\quad+\sum_{\begin{subarray}{c}Q\in\DD^\mu:\\Q\supset D\end{subarray}}
\sum_{R\in V(Q)}\sum_{\begin{subarray}{c}P\in\Stop(R)\end{subarray}}
\left(\frac{\ell(P)}{\ell(R)}\right)^{1/2}\ell(R)^{-n}\left(|f_2|^2\mu\right)\left(P\right).
\end{split}
\end{equation}
Plugging (\ref{Msharp 17}) and (\ref{Msharp 20}) into (\ref{Msharp 28}) finally yields
\begin{equation}\label{Msharp 29}
\begin{split}
S_3^2\lesssim\MM_2 f(x_0)^2
+\sum_{\begin{subarray}{c}S\in\Top
\end{subarray}}\sum_{\begin{subarray}{c}P\in\Stop(S)\end{subarray}}\left(\frac{\ell(P)}{D(P,D)}\right)^{n+1/2}m_P\left(|f|^2\right).
\end{split}
\end{equation}

Similarly to what we said below (\ref{Msharp 21}), the same estimate that we have for $S_3$ also holds for $S_4$. Therefore, applying (\ref{Msharp 29}) (and the same estimate for $S_4$) to (\ref{Msharp 23}), we see that
\begin{equation}\label{Msharp 30}
\begin{split}
\bigg(\sum_{m\in J_2}b_m^\rho\bigg)^{1/\rho}
\lesssim\MM_2 f(x_0)
+\bigg(\sum_{\begin{subarray}{c}S\in\Top\end{subarray}}\sum_{\begin{subarray}{c}P\in\Stop(S)\end{subarray}}\left(\frac{\ell(P)}{D(P,D)}\right)^{n+1/2}m_P\left(|f|^2\right)\bigg)^{1/2}.
\end{split}
\end{equation}

To complete the proof of the theorem it only remains to put all the estimates together and to use standard arguments. From
$(\ref{Msharp 6})$, $(\ref{Msharp 22})$ and $(\ref{Msharp 30})$, we see that
\begin{equation*}
\begin{split}
\bigg(\sum_{m\in \Z}(a_m+b_m)^\rho\bigg)^{1/\rho}
\lesssim\MM_2 f(x_0)
+\bigg(\sum_{\begin{subarray}{c}S\in\Top\end{subarray}}\sum_{\begin{subarray}{c}P\in\Stop(S)\end{subarray}}\left(\frac{\ell(P)}{D(P,D)}\right)^{n+1/2}m_P\left(|f|^2\right)\bigg)^{1/2},
\end{split}
\end{equation*}
which, by (\ref{Msharp 4}) and (\ref{Msharp 5}), implies that
\begin{equation}\label{Msharp 31}
\begin{split}
I_2&=\frac{1}{\mu(D)}\int_D|(\VV_\rho\circ\TT^\mu)f_2-(\VV_\rho\circ\TT^\mu) f_2(z_D)|\,d\mu\\
&\lesssim\MM_2 f(x_0)
+\bigg(\sum_{\begin{subarray}{c}S\in\Top\end{subarray}}\sum_{\begin{subarray}{c}P\in\Stop(S)\end{subarray}}\left(\frac{\ell(P)}{D(P,D)}\right)^{n+1/2}m_P\left(|f|^2\right)\bigg)^{1/2}.
\end{split}
\end{equation}
Finally, combining (\ref{Msharp 1}) and (\ref{Msharp 2}) with (\ref{Msharp 3}) and (\ref{Msharp 31}), and using that $\bigcup_{S\in\Top}\Stop(S)\subset\Top$, we conclude that
\begin{equation}\label{Msharp 32}
\begin{split}
(\MM^\sharp_{\DD^\mu}\circ\VV_\rho&\circ\TT^\mu)f(x_0)\\
&\lesssim\MM_2 f(x_0)+\sup_{\begin{subarray}{c}D\in\DD^\mu:\,x_0\in D\end{subarray}}
\bigg(\sum_{\begin{subarray}{c}S\in\Top\end{subarray}}\sum_{\begin{subarray}{c}P\in\Stop(S)\end{subarray}}\left(\frac{\ell(P)}{D(P,D)}\right)^{n+1/2}m_P\left(|f|^2\right)\bigg)^{1/2}\\
&\lesssim\MM_2 f(x_0)
+\bigg(\sum_{\begin{subarray}{c}P\in\Top\end{subarray}}\left(\frac{\ell(P)}{D(P,x_0)}\right)^{n+1/2}m_P\left(|f|^2\right)\bigg)^{1/2}\\
&=:\MM_2 f(x_0)+\EE_{1/2} f(x_0),
\end{split}
\end{equation}
for all $x_0\in\supp(\mu)$, where we denoted
\begin{equation}\label{eqDD2}
D(P,x_0):=\ell(P)+ \dist(P,x_0).
\end{equation}

In Lemma \ref{lemma E} below we prove that $\EE_{1/2}$ is a bounded operator in $L^p(\mu)$ for all $2<p<\infty$. Assuming this for the moment, by (\ref{Msharp 32}) and the $L^p(\mu)$-boundedness of $\MM_2$, we see that $\MM^\sharp_{\DD^\mu}\circ\VV_\rho\circ\TT^\mu$ is also bounded in $L^p(\mu)$ for all $2<p<\infty$.  Then we obtain
\begin{equation*}
\begin{split}
\|(\VV_\rho\circ\TT^\mu)f\|_{L^p(\mu)}
\leq\|(\MM_{\DD^\mu}\circ\VV_\rho\circ\TT^\mu)f\|_{L^p(\mu)}
\lesssim\|(\MM^\sharp_{\DD^\mu}\circ\VV_\rho\circ\TT^\mu)f\|_{L^p(\mu)}
\lesssim\|f\|_{L^p(\mu)}
\end{split}
\end{equation*}
for all $2<p<\infty$, and the theorem is proved.
\end{proof}

\vspace{2mm}

\begin{lemma}\label{lemma E}
Given $\delta>0$, set
$$\EE_{\delta} f(x):=\bigg(\sum_{\begin{subarray}{c}P\in\Top\end{subarray}}\left(\frac{\ell(P)}{D(P,x)}\right)^{n+\delta}m_P\left(|f|^2\right)\bigg)^{1/2}$$
for $f\in L^p(\mu)$ and $x\in\R^d$, where $D(P,x)$ is defined in \eqref{eqDD2}.
Then $\EE_{\delta}$ is a bounded operator in $L^p(\mu)$ for all $2<p<\infty$.
\end{lemma}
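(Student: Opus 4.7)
The plan is to linearize $|\EE_\delta f|^2$, dualize it against an $L^{(p/2)'}(\mu)$ function, and then reduce the resulting double sum-integral to two applications of the Hardy--Littlewood maximal operator via the Carleson packing condition on $\Top$.

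More concretely, set $g:=|f|^2$ and $q:=p/2>1$. Since
\[
\EE_\delta f(x)^2 = \sum_{P\in\Top}\Bigl(\frac{\ell(P)}{D(P,x)}\Bigr)^{n+\delta} m_P g =: Tg(x),
\]
the estimate $\|\EE_\delta f\|_{L^p(\mu)}\lesssim \|f\|_{L^p(\mu)}$ is equivalent to $\|Tg\|_{L^q(\mu)}\lesssim \|g\|_{L^q(\mu)}$. By duality it is enough to show that
\[
\int Tg\cdot h\,d\mu \lesssim \|g\|_{L^q(\mu)}\|h\|_{L^{q'}(\mu)}
\]
for any nonnegative $h\in L^{q'}(\mu)$.

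The first key step is an AD-regular kernel estimate: for each fixed $P\in\Top$, splitting $\R^d$ into the annuli $\{x:2^k\ell(P)\leq D(P,x)<2^{k+1}\ell(P)\}$ for $k\geq 0$ and using that $\mu(B(z_P,2^k\ell(P)))\lesssim (2^k\ell(P))^n$, one obtains
\[
\int \Bigl(\frac{\ell(P)}{D(P,x)}\Bigr)^{n+\delta} h(x)\,d\mu(x) \lesssim \ell(P)^n \inf_{y\in P}\widetilde\MM h(y) \approx \mu(P)\inf_{y\in P}\widetilde\MM h(y),
\]
where $\widetilde\MM$ is the (non-centered) Hardy--Littlewood maximal operator. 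This is where the hypothesis $\delta>0$ enters crucially, to make $\sum_k 2^{-k\delta}$ summable. Substituting into the integral and swapping sum and integral, one gets
\[
\int Tg\cdot h\,d\mu \lesssim \sum_{P\in\Top}\bigl(m_P g\bigr)\bigl(\inf_{y\in P}\widetilde\MM h(y)\bigr)\,\mu(P).
\]

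Now I would apply the dyadic Carleson embedding theorem (Theorem \ref{teocarleson}) to $a_P := \mu(P)$, which satisfies a packing condition because $\Top$ is a Carleson family, with the weights $\gamma_P := (m_P g)\cdot\inf_{y\in P}\widetilde\MM h(y)$. Since for every $x_0$,
\[
\sup_{P\ni x_0}\gamma_P \leq \Bigl(\sup_{P\ni x_0} m_P g\Bigr)\Bigl(\sup_{P\ni x_0}\inf_{y\in P}\widetilde\MM h(y)\Bigr) \lesssim \widetilde\MM g(x_0)\,\widetilde\MM h(x_0),
\]
the embedding theorem yields
\[
\int Tg\cdot h\,d\mu \lesssim \int \widetilde\MM g\cdot \widetilde\MM h\,d\mu \leq \|\widetilde\MM g\|_{L^q(\mu)}\|\widetilde\MM h\|_{L^{q'}(\mu)}\lesssim \|g\|_{L^q(\mu)}\|h\|_{L^{q'}(\mu)},
\]
where the last step uses the $L^r(\mu)$ boundedness of $\widetilde\MM$ for $r>1$, valid for both $q=p/2>1$ and $q'<\infty$. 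This gives $\|Tg\|_{L^q(\mu)}\lesssim \|g\|_{L^q(\mu)}$, hence $\|\EE_\delta f\|_{L^p(\mu)}^2 = \|Tg\|_{L^{p/2}(\mu)}\lesssim \|g\|_{L^{p/2}(\mu)} = \|f\|_{L^p(\mu)}^2$, as desired.

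No step looks particularly hard: the only delicate point is the AD-regular annular kernel estimate, which is essentially a geometric series summable precisely because $\delta>0$. The requirement $p>2$ appears only to ensure $p/2>1$, so that the maximal function is bounded on $L^{p/2}(\mu)$; this matches exactly the restriction in the statement of Theorem \ref{Lp thm}.
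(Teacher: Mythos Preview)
Your proof is correct and follows essentially the same approach as the paper: dualize $(\EE_\delta f)^2$, use the AD-regular annular estimate to bound the kernel integral by $\mu(P)$ times the maximal function of the dual test function, and conclude via the Carleson packing condition on $\Top$. The only cosmetic difference is that the paper applies H\"older's inequality first and then the $L^p$ Carleson embedding \eqref{eqcarleson2} to each factor separately, whereas you apply the sup version \eqref{eqcarleson1} of the embedding first and H\"older afterwards; both routes are equivalent here.
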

\begin{proof}
The proof follows by duality and Carleson's embedding theorem. Since $2<p<\infty$, if $q$ is such that $2/p+1/q=1$ then $1<q<\infty$, thus
\begin{equation}\label{E eq4}
\begin{split}
\|\EE_{\delta} f\|_{L^p(\mu)}=\|(\EE_{\delta} f)^2\|_{L^{p/2}(\mu)}^{1/2}
=\sup_{\|g\|_{L^{q}(\mu)}\leq1}\left|\int(\EE_{\delta} f)^2g\,d\mu\right|^{1/2}.
\end{split}
\end{equation}
Note that
\begin{equation}\label{E eq1}
\begin{split}
\left|\int(\EE_{\delta} f)^2g\,d\mu\right|
\leq\sum_{\begin{subarray}{c}P\in\Top\end{subarray}}m_P\left(|f|^2\right)\int\left(\frac{\ell(P)}{D(P,x)}\right)^{n+\delta}|g(x)|\,d\mu(x).
\end{split}
\end{equation}
Integrating over dyadic annuli and using that $\mu$ is $n$-AD regular, it is easy to check that
\begin{equation}\label{E eq2}
\begin{split}
\frac{1}{\mu(P)}\int\left(\frac{\ell(P)}{D(P,x)}\right)^{n+\delta}|g(x)|\,d\mu(x)\lesssim\MM g(y)\quad \mbox{ for all $y\in P$}
\end{split}
\end{equation}
 (here it is crucial that $\delta>0$). Thus, by (\ref{E eq1}), (\ref{E eq2}), H\"older's inequality and Carleson's embedding Theorem \ref{teocarleson} (recall that $p/2$ and $q$ belong to $(1,\infty)$),
\begin{equation}\label{E eq3}
\begin{split}
\left|\int(\EE_{\delta} f)^2g\,d\mu\right|
&\lesssim\sum_{\begin{subarray}{c}P\in\Top\end{subarray}}m_P\left(|f|^2\right)m_P(\MM g)\mu(P)\\
&\leq\bigg(\sum_{\begin{subarray}{c}P\in\Top\end{subarray}}\bigl(m_P\left(|f|^2\right)\bigr)^{p/2}\mu(P)\bigg)^{2/p}
\bigg(\sum_{\begin{subarray}{c}P\in\Top\end{subarray}}(m_P(\MM g))^{q}\mu(P)\bigg)^{1/q}\\
&\lesssim\||f|^2\|_{L^{p/2}(\mu)}\|\MM g\|_{L^{q}(\mu)}
\lesssim\|f\|_{L^{p}(\mu)}^2\|g\|_{L^{q}(\mu)}.
\end{split}
\end{equation}
From (\ref{E eq4}) and  (\ref{E eq3}) we conclude that
$\|\EE_{\delta} f\|_{L^{p}(\mu)}\lesssim\|f\|_{L^{p}(\mu)}$, as wished.
\end{proof}

\section{The proof of Theorem \ref{teopra}}\label{sec5}

The arguments are very similar to the ones for the proof of Theorem \ref{teopri} and so we will only sketch
the main ideas.

When $K$ is an odd kernel satisfying \eqref{4eq333}, one of the main ingredients of the proof of the boundedness of $\VV_\rho\circ\TT$ from $M(\R^d)$ to $L^{1,\infty}(\mu)$ in Section \ref{sec3}
and of  $\VV_\rho\circ\TT^\mu$ in $L^p(\mu)$ for $2<p<\infty$ in Section \ref{sec4} is Theorem
\ref{weakL1 thm smooth}, which ensures the boundedness of
 $\VV_\rho\circ\TT^\mu $ in $L^{2}(\mu)\to L^{2}(\mu)$  and of
 $\VV_\rho\circ\TT_\varphi$ from $M(\R^d)$ to $L^{1,\infty}(\mu)$. The reader can easily check that
 exactly the same arguments contained in Sections \ref{sec3} and \ref{sec4} show that
if $K(\cdot,\cdot)$ is a Calder\'on-Zygmund kernel as in Theorem \ref{teopra} and $T$ is the associated operator, and moreover
the following assumptions hold:
\begin{itemize}
\item [$(i)$] $\VV_\rho\circ\TT^\mu:L^{2}(\mu)\to L^{2}(\mu)$  is bounded,
\item [$(ii)$] $\VV_\rho\circ\TT_\varphi:M(\R^d)\to L^{1,\infty}(\mu)$  is bounded,
\end{itemize}
then
 $\VV_\rho\circ\TT:M(\R^d)\to L^{1,\infty}(\mu)$ and
  $\VV_\rho\circ\TT^\mu:L^{p}(\mu)\to L^{p}(\mu)$, $2<p<\infty$, are also bounded. That is, the same conclusions of Theorems \ref{weakL1 thm sl} and \ref{Lp thm} hold.

Thus, by interpolation, to conclude the proof of Theorem \ref{teopra} it just remains to check that the conditions $(i)$
and $(ii)$ above hold. This is obvious in the case of condition $(i)$ because this is indeed one of the
 main assumptions of Theorem \ref{teopra}. Concerning $(ii)$, note first that the boundedness of
 $\VV_\rho\circ\TT^\mu$ in $L^{2}(\mu)$ implies that $\VV_\rho\circ\TT^\mu_\varphi$ is also bounded in $L^{2}(\mu)$. This is an immediate consequence of the pointwise estimate
$$\VV_\rho\circ\TT^\mu_\varphi(f)(x)\lesssim \VV_\rho\circ\TT^\mu(f)(x),$$
which can be obtained by writing
$$T_{\varphi_\epsilon}(f\mu)(x) :=  \int \varphi_\epsilon(x-y)K(x,y)\,f(y)\,d\mu(y)$$
in terms of a convex combination of functions of the form
$$T_{\delta}(f\mu)(x) :=  \int_{|x-y|>\delta}K(x,y)\,f(y)\,d\mu(y),$$
for  $\delta>0$ belonging to some interval depending on $\epsilon$ and then applying Minkowski's integral inequality. The arguments are quite similar to the ones
in \eqref{eq2}-\eqref{eq4} and we omit them.

Then, basically the same arguments for the proof of Theorem 2.5 in \cite{MT} show that the boundedness
of $\VV_\rho\circ\TT^\mu_\varphi$ in $L^2(\mu)$ implies that $\VV_\rho\circ\TT_\varphi$ is bounded
from $M(\R^d)$ to $L^{1,\infty}(\mu)$. This is shown in \cite{MT} for the case when
$K$ is an odd kernel satisfying \eqref{4eq333} and $\mu$ is the
Hausdorff measure $\HH^n$ on a Lipschitz graph. However, the same proof with very minor changes works
in the more general situation when $K(\cdot,\cdot)$ is a kernel such as in Theorem \ref{teopra} and
$\mu$ is just and $n$-dimensional AD-regular measure.

\end{document}